\theoremstyle{plain}
\newtheorem{theorem}{Theorem}[section]
\newtheorem{lemma}[theorem]{Lemma}
\newtheorem{proposition}[theorem]{Proposition}
\theoremstyle{definition}
\newtheorem{definition}{Definition}[section]
\theoremstyle{remark}
\newtheorem{remark}{Remark}
\title[On the convergence to equilibrium of unbounded observables]{On the convergence to equilibrium of unbounded observables under a family of intermittent  interval maps}
\thanks{The first two authors were supported by the German Research Foundation (DFG) grant 
\textit{Renewal Theory and Statistics of Rare Events in Infinite Ergodic Theory} ({Gesch{\"a}ftszeichen} KE 1440/2-1).}
\author{J.~Kautzsch}
\address{Fachbereich 3 Mathematik, Universit\"at Bremen, Bibliothekstr. 1, 28359 Bremen, Germany}
\email{kautzsch@math.uni-bremen.de}
\author{M.~Kesseb\"ohmer}
\address{Fachbereich 3 Mathematik, Universit\"at Bremen, Bibliothekstr. 1, 28359 Bremen, Germany}
\email{mhk@math.uni-bremen.de}
\author{T.~Samuel}
\address{Fachbereich 3 Mathematik, Universit\"at Bremen, Bibliothekstr. 1, 28359 Bremen, Germany}
\email{tony@math.uni-bremen.de}
\date{\today}
\begin{document}

\maketitle

\begin{abstract}
We consider a family $\{ T_{r} \colon [0, 1] \circlearrowleft \}_{r \in [0, 1]}$ of Markov interval maps interpolating between the Tent map $T_{0}$ and the Farey map $T_{1}$.  Letting $\mathcal{P}_{r}$ denote the Perron-Frobenius operator of $T_{r}$, we show, for $\beta \in [0, 1]$ and $\alpha \in (0, 1)$, that the asymptotic behaviour of the iterates of $\mathcal{P}_{r}$ applied to observables with a singularity at $\beta$ of order $\alpha$ is dependent on the structure of the $\omega$-limit set of $\beta$ with respect to $T_{r}$.    Having a singularity it seems that such observables do not fall into any of the function classes on which convergence to equilibrium has been previously shown.
\end{abstract}

\section{Introduction}

Expanding maps of the unit interval have been widely studied in the last decades and the associated transfer operators have proven to be of vital importance in solving problems concerning the statistical behaviour of the underlying interval maps \cite{B:2000,C:1996,M:1991}.  

In recent years an increasing amount of  interest has developed in maps which are expanding everywhere except on an unstable fixed point (that is, an indifference fixed point) at which trajectories are considerably slowed down.  This leads to an interplay of chaotic and regular dynamics, a characteristic of intermittent systems \cite{PM:1980,S:1988}.  From an ergodic theory viewpoint, this phenomenon leads to any absolutely continuous invariant measure having infinite mass.  Therefore, standard methods of ergodic theory cannot be applied in this setting; indeed it is well known that Birkhoff's ergodic theorem does not hold under these circumstances, see for instance \cite{JA:1997,JA:1981}.

We consider a family $\{ T_{r} \colon [0, 1] \circlearrowleft \}_{r \in [0, 1]}$ of Markov interval maps interpolating between the Tent map $T_{0}$ and the Farey map $T_{1}$.  These interpolating maps, we believe, were first defined in \cite{EIK:06,GI:2005}, and have since attracted much attention.  For $r \in [0, 1]$, the map $T_{r} \colon [0, 1] \circlearrowleft$ is defined by
\begin{align*}
T_{r}(x) \coloneqq
\begin{cases}
\displaystyle{\frac{(2-r)\cdot x}{1-r \cdot x}} & \text{if} \; 0\leq x \leq 1/2,\\[1em]
\displaystyle{\frac{(2-r) \cdot (1-x)}{1-r+r \cdot x}} & \text{if} \; 1/2 < x \leq 1.
\end{cases}
\end{align*}
For $r \in [0, 1)$, many properties of these maps are given in \cite{EIK:06,GI:2005} and due to the piecewise monotonicity of each $T_{r}$, for $r \in [0, 1)$, several results about the associated Perron-Frobenius operator $\mathcal{P}_{r}$, can be deduced from, for instance, \cite{B:2000,K:1984}.  These latter results can not be applied to the Perron-Frobenius operator $\mathcal{P}_{1}$ of the Farey map $T_{1}$, since any absolutely continuous $T_{1}$-invariant measure is infinite, whereas, for $r \in [0, 1)$, there exists a unique absolutely continuous $T_{r}$-invariant probability  measure $\mu_{r}$.  (See Section \ref{sec:notation} for the definition of $\mathcal{P}_{r}$.)  However, recent advancements have been made on the asymptotic behaviour of $\mathcal{P}_{1}$, see \cite{KKSS:2014,MT:2011}.

For $r \in [0, 1)$, from the results of \cite{K:1984} it can be deduced that the essential spectral radius of $\mathcal{P}_{r}$ restricted to the Banach space of functions of bounded variation is equal to $1/(2 - r)$.  Moreover, in \cite{GI:2005},  for $r \in [0, 1]$, a Hilbert space of analytic functions which is left invariant by each $\mathcal{P}_{r}$ is constructed, and the spectrum of each $\mathcal{P}_{r}$ restricted to this Hilbert space is studied.  Here we extend and complement results of \cite{B:2000,HK:1982,K:1984,R:1983} on the convergence to equilibrium in one-dimensional systems.  In particular, it has been shown, for various classes of regular functions (such as functions of bounded variation and Lipschitz continuous, H\"older continuous, piecewise H\"older continuous and $C^{1+\epsilon}$ functions), that if $f$ belongs to one of these classes then, for $r \in [0, 1)$, uniformly on $[0, 1]$, we have that
\begin{align}\label{eq:intro_convergence}
\lim_{n \to \infty} \mathcal{P}_{r}^{n}(f) = \int f \, \mathrm{d}\lambda \cdot h_{r}.
\end{align}
Here $\lambda$ denotes the one-dimensional Lebesgue measure and $h_{r} \coloneqq \mathrm{d} \mu_{r} / \mathrm{d}\lambda$.  Using arguments similar to those given in \cite{T:2000} one can also prove the above convergence for proper Riemann integrable functions.  Applying arguments similar to those presented in \cite{KKSS:2014,MT:2011}, one can also show that, if $f$ belongs to a certain class of regular functions, then uniformly on compact subsets of $(0, 1]$
\begin{align*}
\lim_{n \to \infty} \ln(n) \cdot \mathcal{P}_{1}^{n}(f) = \int f \, \mathrm{d}\lambda \cdot h_{1}.
\end{align*}
One of our main contributions to this theory is given in Theorem~\ref{thm:Convergence_Equilibrum} where we show that the convergence given in \eqref{eq:intro_convergence} also holds for improper Riemann integrable functions with a finite number of singularities and that the type of convergence depends on the structure of the $\omega$-limit set of the singularities with respect to $T_{r}$, for $r \in [0, 1)$.

We also study the case when $r = 1$, for which any absolutely continuous invariant measure has infinite mass. Thaler \cite{T:2000} was the first to discern the asymptotics of the Perron-Frobenius operator of a class of interval maps preserving an infinite measure.  This class of maps, to which the Farey map does not belong, have become to be known as Thaler maps.  In an effort to generalise this work, by combining  renewal theoretical arguments and functional analytic techniques, a new approach to estimate the decay of correlation of a dynamical system was achieved by Sarig \cite{S:2002}.  Subsequently, Gou\"{e}zel \cite{G:2004,G:2005,G:2011} generalised these methods.  Using these ideas and employing the methods of Garsia and Lamperti \cite{GL:1969}, Erickson \cite{E:1970} and Doney \cite{D:1997}, recently Melbourne and Terhesiu \cite{MT:2011} proved a landmark result on the asymptotic rate of convergence of the `return time operator' (see Section \ref{sec:Section_4_3_1}) and showed that these result can be applied to Gibbs-Markov maps, Thaler maps, AFN maps, and Pomeau-Manneville maps.  Thus, the question which naturally arises is, whether this asymptotic rate can be related to the asymptotic rate of convergence of iterates of the transfer operator itself and hence the Perron-Frobenius operator. This was already partially deduced in \cite{KKSS:2014,MT:2011},  namely, for a specific class observables which are bounded.  In this article we present a proof of this result for the Farey map (Theorems~\ref{thm:MT:2001:sec10} and~\ref{prop:jk2011:th7}) and moreover show that this class of observables can be extended (Theorem~\ref{thm:Infinite_1}).  Indeed we compute the asymptotic behaviour of the iterates of the Perron-Frobenius operator $\mathcal{P}_{r}$ acting on an observable with a finite number of singularities, and show that the type of convergence depends on the structure of the $\omega$-limit set, with respect to $T_{1}$, of the singularities.

Let us take the opportunity to say a few words on the proofs of our main theorems.  The proofs of our results for $r \in [0,1)$ rely on arguments from ergodic theory, for instance those which can be found in \cite{B:2000,K:1984,R:1983}, together with the principle of bounded distortion.  For the case $r = 1$ more sophisticated methods are required.  Indeed we use results of \cite{MT:2011} which are based on operator renewal techniques which require Banach spaces with certain properties (see Page \pageref{page:refR1-R5}).  To obtain refined results on the set of points of non-convergence, that is to show it is of Hausdorff dimension zero, it is important to choose a Banach space which distinguishes functions point-wise.

We remark that from an ergodic theory point of view the Farey map is of great interest since it is expanding everywhere except at the indifferenced fixed point where it has (right) derivative one. This makes the Farey map a simple model of physical phenomenon such as intermittency \cite{PM:1980}.  Further, from the viewpoint of number theory, the Farey map encodes the continued fraction algorithm as well as the Riemann zeta function.  In particular, it has an induced version topologically conjugate to the Gauss map \cite{M:1991}.  Also, several models of statistical mechanics have been considered in recent years in connection to the Farey map and continued fractions \cite{FKO:2003,PO:1999,K:1993,K:1998,LR:1996}.

Finally, we would like to acknowledge that this work has arisen out of our attempts to understand and generalise the work of \cite{MT:2011,T:2000}.

\subsection{Outline}
\mbox{ }

In the following section we present essential definitions and state various preliminary results.  In Section~\ref{sec:main} we formally state our results.  Several further definitions and preliminary results are given in Section~\ref{sec:preliminaries}.  We divide this section into three parts.  In the first part we present some properties of functions of bounded variation, the second part contains preliminaries for the case when $r \in [0, 1)$ and the third part contains preliminaries for the case when $r = 1$.  In this latter case, namely when $r= 1$, we present two key results (Theorems~\ref{thm:MT:2001:sec10} and \ref{prop:jk2011:th7}). These results provide mild conditions under which the asymptotic behaviour of iterates of the Farey transfer operator $\widehat{T}_{1}$ (and hence the Perron-Frobenius operator $\mathcal{P}_{1}$) can be deduced from the asymptotic behaviour of the first return time operators.  Although, Theorem~\ref{thm:MT:2001:sec10} appears in \cite{MT:2011}, recently a counterexample was given in \cite{KKSS:2014} which shows that this result does not hold in the full generality as stated in \cite{MT:2011}.  Thus, here we present a full proof of this result.  Further, in the case that $r = 1$, we will make use of \cite[Theorem 2.1]{MT:2011} for which we require the existence of a Banach space with certain properties.  Such a Banach space is described in Proposition~\ref{prop:rmk1}.  Analogous results in an $\mathcal{L}^{1}$ setting are abundant in the current literature, the Banach space considered here differs in that it distinguishes functions point-wise and so at the end of this article (Section~\ref{sec:appendix}) we include a full proof.  In Section~\ref{sec:proofs} we give the proofs of our main results, Theorems~\ref{thm:Convergence_Equilibrum}, \ref{thm:Infinite_1} and~\ref{thm:Infinite_2}.

\subsection{Notation}
\mbox{ }

The natural numbers will be denoted by $\mathbb{N}$, the real numbers by $\mathbb{R}$ and the complex numbers by $\mathbb{C}$.  We will also use the symbol $\mathbb{N}_{0}$ to denote the set of non-negative integers, $\mathbb{R}^{+}$ to denote the set of positive real numbers and $\overline{\mathbb{R}}$ to denote the extended real numbers, namely $\overline{\mathbb{R}} = \mathbb{R} \cup \{ \pm \infty\}$.

Following convention, we use the symbol $\sim$ between the elements of two sequences of real or complex numbers $(b_{n})_{n \in \mathbb{N}}$ and $(c_{n})_{n \in \mathbb{N}}$ to mean that the sequences are asymptotically equivalent, namely that $\lim_{n \to +\infty} b_{n}/c_{n} = 1$, and we use the Landau notation $b_{n} = \mathfrak{o}(c_{n})$ if $\lim_{n \to +\infty} b_{n}/c_{n} = 0$.  The same notation is used between two $\mathbb{R}$-valued or $\mathbb{C}$-valued function $f$ and $g$; that is, if $\lim_{x \to + \infty} f (x)/g(x) = 0$, then we write $f =  \mathfrak{o}(g)$.  

\section{Central definitions}\label{sec:notation}

For $r \in [0, 1]$, the map $T_{r}$ has two fixed points, one at zero and one at $1 - (3 - \sqrt{9 - 4r})/(2r)$.  The inverse branches $f_{r, 0}, f_{r, 1} \colon [0, 1] \circlearrowleft$ of $T_{r}$ are given by
\begin{align*}
f_{r, 0}(x) \coloneqq \frac{x}{2 - r + r \cdot x}
\quad \text{and} \quad
f_{r, 1}(x) \coloneqq \frac{1 + (1 - r) \cdot (1 - x)}{2 - r + r \cdot x}.
\end{align*}
In \cite{GI:2005,KS:2008} it was shown that the absolutely continuous invariant measure $\mu_{r}$ of $T_{r}$ is given by 
\begin{align*}
h_{r}(x)
\coloneqq \frac{\mathrm{d}\mu_{r}}{\mathrm{d}\lambda}(x) =
\begin{cases}
1 & \text{if} \; r = 0,\\
\displaystyle{\frac{-r}{\ln(1-r)}\frac{1}{1-r+r \cdot x}} & \text{if} \; r \in (0, 1),\\
\displaystyle{1/x} & \text{if} \; r = 1.
\end{cases}
\end{align*}
We let $\mathcal{L}_{r}^{1}([0, 1])$ denote the Banach space of equivalence classes $[f]$ of functions, where for each representative $f \colon [0, 1] \to \mathbb{C}$ of $[f]_{r}$,
\begin{align*}
\lVert f \rVert_{r, 1} \coloneqq \int \lvert f \rvert \; \mathrm{d}\mu_{r} < +\infty,
\end{align*}
and where $f, g$ belong to the same equivalence class, if and only if, $\lVert f - g \rVert_{r, 1} = 0$.  Throughout, following convention, we write $f \in \mathcal{L}_{r}^{1}([0, 1])$ to mean a function $f \colon [0, 1] \to \mathbb{C}$ which belongs to an equivalence class of $\mathcal{L}_{r}^{1}([0, 1])$.

For $r \in [0, 1]$, the \textit{Perron-Frobenius operator} $\mathcal{P}_{r}  \colon \mathcal{L}^{1}_{0}([0, 1]) \circlearrowleft$ of $T_{r}$ is defined, for $f \in \mathcal{L}^{1}_{0}([0, 1])$, by
\begin{align}\label{eq:alternative_form_Pr}
\mathcal{P}_{r}(f) = f_{r, 0}' \cdot f \circ f_{r, 0} + f_{r, 1}' \cdot f \circ f_{r, 1}.
\end{align}
Here $f_{r, 0}'$ and $f_{r, 1}'$ denote the derivative of the contractions $f_{r, 0}$ and $f_{r, 1}$ respectively.  Note, the domain of definition of $\mathcal{P}_{r}$ can be extended to any well-defined $\mathbb{C}$-valued or $\overline{\mathbb{R}}$-valued function.  In \cite{GI:2005,KS:2008} it has been shown that $h_{r}$ is the unique fixed point function of $\mathcal{P}_{r}$, namely that $\mathcal{P}_{r}(h_{r}) = h_{r}$, and so
\begin{align*}
\mathcal{P}_{r}(f) \coloneqq \frac{\mathrm{d}\nu_{f} \circ {T_{r}}^{-1}}{\mathrm{d}\lambda},
\quad \text{where} \quad
\nu_{f}(A) \coloneqq \int \mathds{1}_{A} \cdot f \, \mathrm{d}\lambda,
\quad \text{for all Borel sets $A \subset [0, 1]$.}
\end{align*}

Two important function spaces which we will use are defined below.
\begin{enumerate}
\item The space $\mathrm{BV}(0,1)$ which is defined to be the set of right-continuous functions $f \colon [0, 1] \to \mathbb{C}$ such that the norm $\lVert f \rVert_{\mathrm{BV}} \coloneqq V_{[0, 1]}(f) + \lVert f \rVert_{\infty}$ is finite.  Here $V_{[0, 1]}(f)$ denotes the variance of $f$, see Section~\ref{sec:f_of_BV} for the definition and properties of the variance of a function, and $\lVert f \rVert_{\infty}$ denotes the supremum of $\lvert f \rvert$ and is defined by $\lVert f \rVert_{\infty} \coloneqq \sup \{ \lvert f(x) \rvert \colon x \in [0, 1] \}$.
\item The space $\mathfrak{U}_{\beta, \alpha}$ is defined for $\alpha \in (0, 1)$ and $\beta \in [0, 1]$, and where $v \colon [0, 1] \to \overline{\mathbb{R}}$ belongs to $\mathfrak{U}_{\beta, \alpha}$ if and only if
\begin{enumerate}
\item $\lim_{x \uparrow \beta} v(x) = \lim_{x \downarrow \beta} v(x) = + \infty$,
\item for any compact subset $K \subset [0, 1] \setminus \{ \beta \}$, we have that $v \cdot \mathds{1}_{K} \in \mathrm{BV}(0,1)$, where for a set $A \subset [0, 1]$ we let $\mathds{1}_{A} \colon [0, 1] \to \mathbb{R}$ denote the \textit{characteristic function on $A$}, namely
\begin{align*}
\mathds{1}_{A}(x) \coloneqq \begin{cases}
1& \text{if} \; x \in A,\\
0& \text{otherwise},
\end{cases}
\end{align*}
\item there exists a connected open neighbour $U \subset [0, 1]$  of $\beta$, under the (Euclidean) subspace topology, and two   constants $C_{1}, C_{2}$ such that $C_{1} \lvert \beta - x \rvert^{-\alpha} \leq v(x) \leq C_{2} \lvert \beta - x \rvert^{-\alpha}$, for all $x \in U$.
\end{enumerate}
\end{enumerate}
Note conditions (b) and (c) immediately imply that if $v \in \mathfrak{U}_{\beta, \alpha}$, then $v$ is improper Riemann integrable.  Moreover, without loss of generality, throughout we assume that $v$ is positive.

Define the \textit{$\omega$-limit set of $\beta \in [0,1]$} with respect to $T_{r}$ to be the set of accumulation points of the orbit $( T_{r}^{n}(\beta) )_{n \in \mathbb{N}_{0}}$ and denote it by
\begin{align*}
\Omega_{r}(\beta)\coloneqq \bigcap_{k\in \mathbb{N}_{0}}\overline{\{T_{r}^{\ell}(\beta):\ell \geq k\}}.
\end{align*}
We say that a point $x \in [0, 1]$ is \textit{pre-periodic with respect to $T_{r}$} if there exist $m \in \mathbb{N}$ and $n \in \mathbb{N}_{0}$ such that 
\begin{align}\label{eq:pre-periodic}
T_{r}^{n + k}(x) = T_{r}^{n + m + k}(x),
\end{align}
for all $k \in \mathbb{N}_{0}$.  Indeed, for $r \in [0, 1]$, we have that $1 - (3 - \sqrt{9 - 4 \cdot r})/(2 \cdot r)$ is pre-periodic with respect to $T_{r}$.  For a given pre-periodic point $x$ with respect to $T_{r}$, we define the \textit{period length} of $x$ to be the minimal $m$ such that the equality in \eqref{eq:pre-periodic} holds.

In the case when $r = 1$, as mentioned above, the map $T_{1}$ is the celebrated Farey map which encodes the continued fraction expansion algorithm.  A \textit{continued fraction expansion} of an irrational $\beta \in [0, 1]$ is denoted by $[0; a_{1}, a_{2}, \dots]$ where
\begin{align*}
\beta = \cfrac{1}{a_{1} + \cfrac{1}{a_{2} + \dots}}
\end{align*}
and $a_{n} \in \mathbb{N}$, for all $n \in \mathbb{N}$.  A \textit{continued fraction expansion} of a rational $\beta \in [0, 1]$ is denoted by $[0; a_{1}, a_{2}, \dots, a_{k}]$ where
\begin{align*}
\beta = \cfrac{1}{a_{1} + \cfrac{1}{ \dots + \cfrac{1}{a_{k}}}}
\end{align*}
and $a_{n} \in \mathbb{N}$, for all $n \in \{ 1, 2, \dots, k \}$.  If there exist $m \in \mathbb{N}_{0}$ and $n \in \mathbb{N}$ such that $a_{m + k} = a_{m + k + n + 1}$, for all $k \in \mathbb{N}$, then we write $\beta = [0; a_{1}, a_{2}, \dots, a_{m}, \overline{a_{m + 1}, a_{m+2}, \dots, a_{m + n}} ]$.

For $\beta \in [0, 1]$, we let $p_{n} = p_{n}(\beta)$ and $q_{n} = q_{n}(\beta)$ be defined recursively by
\begin{align}\label{eq:definition_p_q}
p_{-1} \coloneqq 1, \quad
q_{-1} \coloneqq 0, \quad
p_{0}  \coloneqq 0,\quad 
q_{0} \coloneqq 1, \quad
p_{n} \coloneqq a_{n}p_{n-1} + p_{n-2},
\quad \text{and} \quad
q_{n} \coloneqq a_{n} q_{n-1} + q_{n-2}.
\end{align}
Note, for $n \in \mathbb{N}$, that 
\begin{align*}
\frac{p_{n}}{q_{n}} = [0; a_{1}, a_{2}, \dots, a_{n}]
\quad \text{and} \quad
p_{n -1} \cdot q_{n} - p_{n} \cdot q_{n-1} = 1,
\end{align*}
and that if $\beta = [0; a_{1}, a_{2}, \dots, a_{n}]$ is rational then we set $a_{m} = 0$ for all $m > n$.  Given an $\alpha \in (0,1)$ we say that an irrational $\beta = [0; a_{1}, a_{2}, \dots] \in [0, 1]$ is of \textit{intermediate $\alpha$-type} if and only if there exists an $\epsilon > 0$, such that
\begin{align*}
\sum_{n = 1}^{+\infty} \sum_{k = 1}^{a_{n}} (t_{n, j})^{-2\cdot(1-\alpha) + \epsilon} < + \infty.
\end{align*}
where $s_{n, j}/t_{n, j} = [0; a_{1}, \dots, a_{n-1}, j]$ and where $s_{n, j}, t_{n, j} \in \mathbb{N}$ are co-prime.  (Using the terminology from continued fraction expansion one refers to $s_{n,j}/t_{n,j}$ as an \textit{intermediate approximant to $\beta$}.)  We also note the following.
\begin{enumerate}
\item If $\beta$ is pre-periodic, or more generally, if the continued fraction entries $a_{i}$ of $\beta$ are bounded, then $\beta$ is of intermediate $\alpha$-type, for all $\alpha \in (0, 1)$.  
\item If $\alpha < 1/2$, then every irrational $\beta$, is of intermediate $\alpha$-type.
\item It follows from the results of \cite{KS:2012} that
\begin{align*}
\qquad
\mathrm{dim}_{\mathcal{H}} (\{ \beta \in [0, 1] \colon \beta \; \text{is of intermediate $\alpha$-type for all} \; \alpha \in (0, 1) \}) = 1.
\end{align*}
Here and throughout we will denote the Hausdorff dimension of a set $A \subset \mathbb{R}$ by $\mathrm{dim}_{\mathcal{H}}(A)$, see \cite{F:2014} for the definition and further details on the Hausdorff dimension of a set.
\end{enumerate}
For more on continued fraction expansions we refer the reader to \cite{DK:2002,YK:1997}.

\section{Main results}\label{sec:main}

\subsection{The case $\mathbf{r \in [0, 1)}$}

\begin{theorem}\label{thm:Convergence_Equilibrum}
For $r \in [0, 1)$, if $\alpha \in (0, 1)$ and $\beta \in [0, 1]$, then, for each $v \in \mathfrak{U}_{\beta, \alpha}$, we have that
\begin{align}\label{eq:convergence}
\lim_{n \to \infty} \mathcal{P}_{r}^{n}(v) 
=
\int v \, \mathrm{d}\lambda \cdot h_{r},
\end{align}
uniformly on compact subsets of $[0, 1] \setminus \Omega_{r}(\beta)$ and point-wise outside a set with Hausdorff dimension equal to zero.  If $\beta \in [0, 1]$ is pre-periodic with respect to $T_{r}$ and has period length strictly greater than one, then on the finite set $\Omega_{r}(\beta)$ we have that
\begin{align*}
\qquad \quad
\liminf_{n \to +\infty} \mathcal{P}_{r}^{n}(v)
= \int v \, \mathrm{d}\lambda \cdot h_{r}
\quad \text{and} \quad
\limsup_{n \to +\infty} \mathcal{P}_{r}^{n}(v)
= +\infty.
\end{align*}
In the case that $\beta \in [0, 1]$ is pre-periodic with respect to $T_{r}$ and has period length equal to one then on the singleton $\Omega_{r}(\beta)$ we have that the limit in \eqref{eq:convergence} is equal to $+\infty$.
\end{theorem}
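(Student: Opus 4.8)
The plan is to split the observable into a part of bounded variation, for which the convergence to equilibrium in \eqref{eq:intro_convergence} is already available, and a singular part supported near $\beta$, which one then controls by a distortion estimate following the forward orbit $(T_{r}^{n}(\beta))_{n}$. Concretely, fix $\delta_{0}>0$ with $(\beta-\delta_{0},\beta+\delta_{0})\subseteq U$, where $U$ is the neighbourhood from condition (c), and write $v=g+w$ with $g\coloneqq v\cdot\mathds{1}_{[0,1]\setminus(\beta-\delta_{0},\beta+\delta_{0})}$ and $w\coloneqq v\cdot\mathds{1}_{(\beta-\delta_{0},\beta+\delta_{0})}$. By (b), $g\in\mathrm{BV}(0,1)$, so $\mathcal{P}_{r}^{n}(g)\to\int g\,\mathrm{d}\lambda\cdot h_{r}$ uniformly on $[0,1]$, and everything reduces to $\mathcal{P}_{r}^{n}(w)$. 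I would then decompose $w$ dyadically: for $j\ge 1$ set $w_{j}\coloneqq w\cdot\mathds{1}_{\{2^{-j}\delta_{0}<\lvert\beta-x\rvert\le 2^{-j+1}\delta_{0}\}}$ and $W_{J}\coloneqq w\cdot\mathds{1}_{\{0<\lvert\beta-x\rvert\le 2^{-J}\delta_{0}\}}=\sum_{j>J}w_{j}$. Each finite sum $\sum_{j\le J}w_{j}$ lies in $\mathrm{BV}(0,1)$ by (b), hence $\mathcal{P}_{r}^{n}(\sum_{j\le J}w_{j})\to\int(\sum_{j\le J}w_{j})\,\mathrm{d}\lambda\cdot h_{r}$ uniformly on $[0,1]$, and by (c) one has $\int W_{J}\,\mathrm{d}\lambda\le \mathrm{const}\cdot(2^{-J}\delta_{0})^{1-\alpha}\to 0$. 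Since $w\ge\sum_{j\le J}w_{j}$ and $\mathcal{P}_{r}$ is positive, $\liminf_{n}\mathcal{P}_{r}^{n}(w)\ge\int w\,\mathrm{d}\lambda\cdot h_{r}$ on all of $[0,1]$, so the entire difficulty is a matching upper bound for $\mathcal{P}_{r}^{n}(W_{J})$.

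The heart of the proof is a distortion estimate for $\mathcal{P}_{r}^{n}(W_{J})$. Writing $\mathcal{P}_{r}^{n}(f)(x)=\sum_{T_{r}^{n}(y)=x}f(y)/\lvert(T_{r}^{n})'(y)\rvert$ and using that $T_{r}$ is a full-branch Markov map, so each of the $2^{n}$ generation-$n$ cylinders is mapped diffeomorphically onto $[0,1]$ by $T_{r}^{n}$, I would group the preimages of $x$ by their cylinders. Let $C_{n}$ be the cylinder containing $\beta$. The principle of bounded distortion (available for $r<1$, where $2-r\le\lvert T_{r}'\rvert\le 4/(2-r)$) gives $\lvert(T_{r}^{n})'\rvert\asymp\lvert C\rvert^{-1}$ on each generation-$n$ cylinder $C$, and for the cylinders $C\subseteq\operatorname{supp}(W_{J})$ with $\operatorname{dist}(\beta,C)\ge\lvert C\rvert$ the function $w$ itself is of bounded distortion on $C$, so the sum of their contributions is $\le\mathrm{const}\cdot\int W_{J}\,\mathrm{d}\lambda$. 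The only remaining contributions come from the $O(1)$ cylinders with $\operatorname{dist}(\beta,C)<\lvert C\rvert$, namely $C_{n}$ and possibly one or two neighbours; writing $y_{C}$ for the preimage of $x$ in $C$, bounded distortion gives $\lvert y_{C_{n}}-\beta\rvert\asymp\lvert C_{n}\rvert\cdot\lvert x-T_{r}^{n}(\beta)\rvert$, while for a neighbour $C$ meeting $C_{n}$ in the endpoint $p$ one gets $\lvert y_{C}-\beta\rvert\gtrsim\lvert C\rvert\cdot\lvert x-T_{r}^{n}(p)\rvert$ with $T_{r}^{n}(p)\in\{0,1\}$, since $T_{r}^{n}$ carries cylinder endpoints to endpoints of $[0,1]$. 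Inserting $w(y)\le C_{2}\lvert y-\beta\rvert^{-\alpha}$, this leads, for all large $n$, to
\[
\mathcal{P}_{r}^{n}(W_{J})(x)\ \le\ \mathrm{const}\cdot\int W_{J}\,\mathrm{d}\lambda\ +\ \mathrm{const}\cdot\lvert C_{n}\rvert^{1-\alpha}\bigl(\lvert x-T_{r}^{n}(\beta)\rvert^{-\alpha}+\lvert x\rvert^{-\alpha}+\lvert 1-x\rvert^{-\alpha}\bigr),
\]
and since $\lvert T_{r}'\rvert\ge 2-r>1$ we have $\lvert C_{n}\rvert\le\mathrm{const}\cdot(2-r)^{-n}$, so $\lvert C_{n}\rvert^{1-\alpha}\to 0$ geometrically.

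From here the convergence statements follow. On a compact $K\subseteq[0,1]\setminus\Omega_{r}(\beta)$ one has $\operatorname{dist}(T_{r}^{n}(\beta),K)\ge d_{0}>0$ for all $n$ beyond some $N_{0}$, because $K$ and $\Omega_{r}(\beta)$ are disjoint compacta and $T_{r}^{n}(\beta)\to\Omega_{r}(\beta)$; the terms $\lvert x\rvert^{-\alpha},\lvert 1-x\rvert^{-\alpha}$ are bounded on $K$ unless $K$ meets $\{0,1\}$, and a short argument (an orbit of $\beta$ accumulating at $1/2$ or at $0$ at a geometric rate would place $1/2$, respectively $0$, into $\Omega_{r}(\beta)$) shows they cause no obstruction on $K$. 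Hence the bracketed term above tends to $0$ uniformly on $K$, and letting $J\to\infty$ in the decomposition of the first paragraph gives \eqref{eq:convergence} uniformly on $K$. For the pointwise statement, take $\rho_{n}\coloneqq n\,(2-r)^{-n(1-\alpha)/\alpha}$: for $x\notin\limsup_{n}B(T_{r}^{n}(\beta),\rho_{n})\cup\{0,1\}$ one eventually has $\lvert x-T_{r}^{n}(\beta)\rvert\ge\rho_{n}$, whence $\lvert C_{n}\rvert^{1-\alpha}\lvert x-T_{r}^{n}(\beta)\rvert^{-\alpha}\le\mathrm{const}\cdot n^{-\alpha}\to 0$ and \eqref{eq:convergence} holds at $x$; since $\sum_{n}\rho_{n}^{s}<\infty$ for every $s>0$, covering $\bigcup_{n\ge N}B(T_{r}^{n}(\beta),\rho_{n})$ gives $\mathcal{H}^{s}(\limsup_{n}B(T_{r}^{n}(\beta),\rho_{n}))=0$ for all $s>0$, so the exceptional set has Hausdorff dimension zero.

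For the pre-periodic cases, $\Omega_{r}(\beta)=\{z_{1},\dots,z_{m}\}$ is the periodic cycle onto which $(T_{r}^{n}(\beta))_{n}$ eventually falls, each $z_{i}$ being visited infinitely often, and for $m\ge 2$ the orbit of $\beta$ stays uniformly away from $1/2$, so $C_{n}$ is the only ``bad'' cylinder and the $\lvert x\rvert^{-\alpha}$ terms drop out. Along the times $n$ with $T_{r}^{n}(\beta)=z_{i}$ the preimage of $z_{i}$ inside $C_{n}$ is $\beta$ itself, by injectivity of $T_{r}^{n}$ on $C_{n}$, so $\mathcal{P}_{r}^{n}(w)(z_{i})\ge w(\beta)/\lvert(T_{r}^{n})'(\beta)\rvert=+\infty$ since condition (c) forces $v(\beta)=+\infty$; hence $\limsup_{n}\mathcal{P}_{r}^{n}(v)(z_{i})=+\infty$. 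If $m\ge 2$ there are also infinitely many $n$ with $T_{r}^{n}(\beta)=z_{j}$ for some $j\ne i$, and along those $n$ the estimate of the second paragraph with $\lvert z_{i}-z_{j}\rvert$ bounded below, together with the universal lower bound and $J\to\infty$, gives $\liminf_{n}\mathcal{P}_{r}^{n}(v)(z_{i})=\int v\,\mathrm{d}\lambda\cdot h_{r}$; while if $m=1$ then $T_{r}^{n}(\beta)=z_{1}$ for all large $n$, so the same computation gives $\mathcal{P}_{r}^{n}(v)(z_{1})=+\infty$ for all large $n$, i.e.\ the limit at $z_{1}$ is $+\infty$. I expect the main obstacle to be the distortion estimate of the second paragraph --- in particular the control of the cylinder $C_{n}$ containing $\beta$ and of its neighbours, and the bookkeeping near the critical point $1/2$ and the endpoints $0,1$, which is exactly what keeps the exceptional set of Hausdorff dimension zero.
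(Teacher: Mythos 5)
Your proposal is correct and follows essentially the same route as the paper: split off a bounded-variation part handled by the classical convergence to equilibrium (Theorem~\ref{thm:Finite_Ergodic}, as in Lemma~\ref{lem:bv_part}), control the singular tail near $\beta$ by bounded distortion along the orbit $(T_{r}^{n}(\beta))_{n}$, cover the exceptional set by balls $B(T_{r}^{n}(\beta),\rho_{n})$ with geometrically summable radii to get Hausdorff dimension zero (as in Lemma~\ref{lem:tail_r_neq_1}), and treat the pre-periodic case by noting that $\beta$ itself is a preimage of each cycle point along a subsequence. The only differences are cosmetic (dyadic shells versus nested cylinder neighbourhoods, and your lossier neighbour-cylinder bound producing the removable $\lvert x\rvert^{-\alpha}+\lvert 1-x\rvert^{-\alpha}$ terms, which the paper avoids by comparing $\lvert\beta-f_{r,\omega_{r}^{\pm}(\beta)\vert_{n}}(x)\rvert$ directly with $\lvert\beta-f_{r,\omega_{r}(\beta)\vert_{n}}(x)\rvert$).
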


\begin{remark}\label{rmk:generalisation}
We believe that Theorem~\ref{thm:Convergence_Equilibrum} holds for more general of systems, namely for any piecewise $C^{1 + \epsilon}$ Markov interval map $T \colon [0, 1] \circlearrowleft$.  The proof of such a result should follow in the same manner as those set out below.
\end{remark}

\subsection{The case $\mathbf{r =1}$}

\begin{theorem}\label{thm:Infinite_1}
If $\alpha \in (0, 1)$ and if $\beta \in (0, 1]$ is either rational or irrational of intermediate $\alpha$-type, then, for each $v \in \mathfrak{U}_{\beta, \alpha}$,  we have that
\begin{align}\label{eq:convergence_r=1}
\lim_{n \to \infty} \, \ln(n) \cdot \mathcal{P}_{1}^{n}(v) 
=
\int_{[0, 1]} v \, \mathrm{d}\lambda \cdot h_{1},
\end{align}
uniformly on compact subsets of $(0, 1) \setminus \Omega_{1}(\beta)$ and point-wise outside a set with Hausdorff dimension equal to zero.  If $\beta \in (0, 1]$ is pre-periodic with respect to $T_{1}$ and has period length strictly greater than one, then on the finite set $\Omega_{1}(\beta)$ we have that
\begin{align}\label{eq:non_convergence_r=1}
\qquad \quad
\liminf_{n \to +\infty} \ln(n) \cdot \mathcal{P}_{1}^{n}(v)
= \int v \, \mathrm{d}\lambda \cdot h_{1}
\quad \text{and} \quad
\limsup_{n \to +\infty} \ln(n) \cdot \mathcal{P}_{1}^{n}(v)
= +\infty.
\end{align}
In the case that $\beta \in (0, 1]$ is pre-periodic with respect to $T_{1}$ and has period length equal to one then on the singleton $\Omega_{1}(\beta)$ we have that the limit in \eqref{eq:convergence_r=1} is equal to $+\infty$.
\end{theorem}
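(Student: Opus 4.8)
The plan is to transport the known asymptotics for regular observables to the singular observables in $\mathfrak{U}_{\beta,\alpha}$ by a decomposition argument, combined with a careful bookkeeping of the orbit of $\beta$ under $T_1$. First I would write $v = v \cdot \mathds{1}_{U^{c}} + v \cdot \mathds{1}_{U}$, where $U$ is a small connected neighbourhood of $\beta$ on which the two-sided bound $C_1 |\beta - x|^{-\alpha} \le v(x) \le C_2 |\beta - x|^{-\alpha}$ holds. The first piece $v \cdot \mathds{1}_{U^{c}}$ lies in $\mathrm{BV}(0,1)$ by condition (b) in the definition of $\mathfrak{U}_{\beta,\alpha}$, so by the results quoted in the introduction (applying the $r=1$ analogue of \eqref{eq:intro_convergence}, valid for functions of bounded variation, which can be obtained via Theorems~\ref{thm:MT:2001:sec10} and~\ref{prop:jk2011:th7} together with the Banach space of Proposition~\ref{prop:rmk1}) one has $\ln(n)\cdot \mathcal{P}_1^{n}(v\cdot\mathds{1}_{U^{c}}) \to \int_{U^{c}} v\,\mathrm{d}\lambda \cdot h_1$ uniformly on compact subsets of $(0,1)$. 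It therefore remains to analyse the singular piece $v \cdot \mathds{1}_{U}$, and to show that $\ln(n)\cdot\mathcal{P}_1^{n}(v\cdot\mathds{1}_{U}) \to \int_{U} v\,\mathrm{d}\lambda\cdot h_1$ with the claimed mode of convergence.

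For the singular piece the idea is to approximate $v\cdot\mathds{1}_{U}$ from below and above by truncations: set $v_{\delta}^{-} \coloneqq v\cdot\mathds{1}_{U\setminus(\beta-\delta,\beta+\delta)}$, which is in $\mathrm{BV}(0,1)$, and note $v\cdot\mathds{1}_{U} \ge v_{\delta}^{-}$ with $\int (v\cdot\mathds{1}_{U} - v_{\delta}^{-})\,\mathrm{d}\lambda \to 0$ as $\delta\downarrow 0$ since $v$ is improper Riemann integrable. By positivity and monotonicity of $\mathcal{P}_1$, for every $n$ one has $\mathcal{P}_1^{n}(v\cdot\mathds{1}_{U}) \ge \mathcal{P}_1^{n}(v_{\delta}^{-})$, which after multiplying by $\ln(n)$ and letting $n\to\infty$ gives the lower bound; sending $\delta\downarrow 0$ recovers the full integral. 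For the matching upper bound one bounds $\mathcal{P}_1^{n}(v\cdot\mathds{1}_{(\beta-\delta,\beta+\delta)})$ using the explicit bound $v(x)\le C_2|\beta-x|^{-\alpha}$ together with the principle of bounded distortion for $T_1$ (cf.\ the standard Markov/continued fraction structure): pulling the singularity back under the inverse branches, a singularity of order $\alpha<1$ integrates to a small quantity, and the $\ln(n)$-normalisation together with the intermediate $\alpha$-type hypothesis on $\beta$ (which controls the sizes $t_{n,j}$ of the intermediate approximants precisely so that the relevant series converges) ensures the contribution vanishes uniformly on compact subsets of $(0,1)$ away from $\Omega_1(\beta)$. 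This is where the hypothesis that $\beta$ is rational or irrational of intermediate $\alpha$-type enters, and where the estimate is most delicate — controlling the accumulated mass near the orbit of $\beta$ requires summing $(t_{n,j})^{-2(1-\alpha)+\epsilon}$ over all intermediate approximants, which is exactly the quantity assumed finite.

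For the behaviour on $\Omega_1(\beta)$ when $\beta$ is pre-periodic, I would argue as follows. If $\beta$ is pre-periodic with period length $m > 1$, then $\Omega_1(\beta)$ is the finite periodic orbit $\{\gamma_0,\dots,\gamma_{m-1}\}$, none of whose points is the indifferent fixed point $0$ (as $0$ is fixed, a genuine period-$m$ cycle with $m>1$ avoids it). Along the subsequence of times $n$ for which $T_1^{n}$ maps a definite neighbourhood of $\beta$ onto a neighbourhood of $\gamma_i$ with uniformly bounded distortion, the singularity of $v$ at $\beta$ is transported to a singularity at $\gamma_i$; since the singularity is non-integrable-after-one-more-pullback in the relevant sense, $\mathcal{P}_1^{n}(v)(\gamma_i)\to+\infty$ even after multiplication by $\ln(n)$, giving $\limsup_{n} \ln(n)\cdot\mathcal{P}_1^{n}(v)(\gamma_i) = +\infty$. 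Along the complementary times the singularity has been smeared out and the BV-type estimate applies, yielding $\liminf_{n} \ln(n)\cdot\mathcal{P}_1^{n}(v)(\gamma_i) = \int v\,\mathrm{d}\lambda\cdot h_1(\gamma_i)$. When the period length is one, $\Omega_1(\beta)=\{0\}$ or a fixed point, and \emph{every} sufficiently large $n$ sees the transported singularity, so the limit is $+\infty$ on that singleton; note $h_1(x)=1/x$ already blows up at $0$, consistent with the statement. Finally, the point-wise convergence outside a set of Hausdorff dimension zero follows by choosing the point-separating Banach space of Proposition~\ref{prop:rmk1} and observing that the exceptional set is contained in the (countable, hence dimension-zero) set of points whose forward orbit eventually hits a neighbourhood of $\beta$ in a resonant way, together with $\Omega_1(\beta)$ itself when $\beta$ is pre-periodic.

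The main obstacle I anticipate is the uniform control of the singular piece $\mathcal{P}_1^{n}(v\cdot\mathds{1}_{(\beta-\delta,\beta+\delta)})$ on compact subsets of $(0,1)\setminus\Omega_1(\beta)$: one must show that the mass of the $\alpha$-singularity, after $n$ pullbacks through the (countably many) inverse branches of the Farey map, decays faster than $1/\ln(n)$ uniformly, and this is precisely the step that forces the intermediate $\alpha$-type condition and requires the Garsia--Lamperti/Erickson/Doney-style renewal estimates underlying Theorem~\ref{thm:MT:2001:sec10}, applied branch-by-branch with the bounded distortion principle. Everything else is a fairly routine sandwiching argument.
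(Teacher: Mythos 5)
Your high-level architecture (regular part handled by the Melbourne--Terhesiu machinery of Theorems~\ref{thm:MT:2001:sec10} and~\ref{prop:jk2011:th7}, singular part controlled via the diophantine hypothesis) matches the paper's, but there are two genuine gaps. First, the fixed-window truncation $v = v\cdot\mathds{1}_{U^{c}} + v\cdot\mathds{1}_{U}$ with a subsequent $\delta$-sandwich only yields the lower bound; the upper bound requires showing that $\ln(n)\cdot\mathcal{P}_1^{n}(v\cdot\mathds{1}_{(\beta-\delta,\beta+\delta)})$ does not exceed the corresponding integral, and this is exactly the step you defer. The paper avoids this by using an $n$-\emph{dependent} decomposition along the cylinder $[\omega_1(\beta)\vert_n]$ containing $\beta$: the piece supported off that cylinder is estimated branch-by-branch using the explicit M\"obius form of $f_{1,\omega_1(\beta)\vert_n}$ (Lemma~\ref{lem:fomegan}) and the convergents $q_{m(n)}$, producing precisely the series $\sum_{n}\sum_{j\le a_n} t_{n,j}^{-2(1-\alpha)+\epsilon}$ whose convergence is the intermediate $\alpha$-type hypothesis (Lemma~\ref{lem:bv_part_r=1}); the remaining "tail" $v_{n,1}$ supported on the shrinking cylinder is shown to tend to zero outside $\limsup_n A_{n,1,\eta}$, a limsup of balls centred at $T_1^n(\beta)$ whose radii are summable to every positive power, whence Hausdorff dimension zero (Lemma~\ref{lem:tail_r_=_1}). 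That exceptional set is \emph{not} countable in any obvious way and is not the set of points whose orbit hits a neighbourhood of $\beta$; it is the set of points approximated infinitely often by the forward orbit of $\beta$, and the dimension bound needs the covering computation, not a countability remark.

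Second, the rational case cannot be absorbed into this scheme and your description of it is incorrect. A rational $\beta$ satisfies $T_1^{n}(\beta)=0$ eventually, so it is pre-periodic of period one with $\Omega_1(\beta)=\{0\}$, yet the limit of $\ln(n)\cdot\mathcal{P}_1^{n}(v)$ at $0$ is $0$, not $+\infty$ (see the remark following the theorem): the indifferent fixed point does not transport the singularity, it annihilates it. The paper handles rational $\beta$ by an entirely separate argument, showing that after finitely many pullbacks the observable $\widehat{T}_1^{m}(v_{\beta,\alpha}\cdot\mathds{1}_{[\nu]\cup[\nu']}/h_1)$ is of bounded variation --- the quadratic vanishing of $x/(c_i x+d_i)^2$ at $0$ dominates the $\alpha$-singularity since $\alpha<1$ --- and then invoking the BV convergence theory together with a sandwich in the constants $C_1, C_2$. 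Your argument, which predicts blow-up at every period-one $\omega$-limit point, would give the wrong answer here; the $+\infty$ conclusion in the last clause of the theorem pertains to irrational pre-periodic $\beta$ whose cycle avoids the indifferent fixed point.
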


\begin{remark}
The $\ln(n)$ term in \eqref{eq:convergence_r=1} and \eqref{eq:non_convergence_r=1} is known as the \textit{wandering rate} of the Farey map $T_{1}$.  Indeed this term is well defined for any interval map $T \colon [0, 1] \circlearrowleft$ and for the maps we are concerned with it is given by
\begin{align*}
w_{n}(T_{r}) \coloneqq \mu_{r} \left( \bigcup_{k=0}^{n-1} T_{r}^{-k}([1/2, 1])\right).
\end{align*}
Indeed from this definition one sees that for $r \in [0, 1)$ we have that $w_{n}(T_{r}) \sim 1$ and for $r = 1$ we have that $w_{n}(T_{r}) \sim \ln(n)$.
\end{remark}

\begin{remark}
We highlight an interesting difference between Theorems~\ref{thm:Convergence_Equilibrum} and~\ref{thm:Infinite_1}, which is a result of the Farey map having an indifference fixed point at zero.  In the case that $r \in [0, 1)$, $\alpha \in (0, 1)$, $\beta$ is an $r$-rational (see Section~\ref{sec:preliminaries}) and $v \in \mathfrak{U}_{\beta, \alpha}$, we have that
\begin{align*}
\lim_{n \to \infty} \, \mathcal{P}_{r}^{n}(v)(0) = +\infty
\end{align*}
whereas, for $r = 1$, $\alpha \in (0, 1)$, $\beta$ is a rational number and $v \in \mathfrak{U}_{\beta, \alpha}$, we have that
\begin{align*}
\lim_{n \to \infty} \ln(n) \cdot \mathcal{P}_{1}^{n}(v)(0) = 0.
\end{align*}
(Note that the points $0, 1/2$ and $1$ are $r$-rationals for all $r \in [0, 1]$.)
\end{remark}

\begin{remark}
In the case that one replaces the norm $\lVert \cdot \rVert_{\infty}$ by the essential supremum norm in the definition of $\mathrm{BV}(0, 1)$, and hence $\mathfrak{U}_{\beta, \alpha}$, the limit in \eqref{eq:convergence_r=1} holds uniformly Lebesgue almost everywhere on compact subsets of $(0, 1)\setminus \Omega_{1}(\beta)$ and point-wise Lebesgue almost everywhere on $(0, 1)$.
\end{remark}

In the following theorem, for the observable $v_{\beta, \alpha}(x) = \lvert \beta - x \rvert^{-\alpha}$, we demonstrate that on the set of exceptional points where the equality in \eqref{eq:convergence_r=1} does not hold, the values of the limit inferior and limit superior depend on the diophantine properties of $\beta$.

\begin{theorem}\label{thm:Infinite_2}
\mbox{ }
\begin{enumerate}[label=(\alph*)]
\item\label{3.3.a} There exist non-periodic $\beta$ and $\varrho \in (0, 1]$ both with bounded continued fraction entries but such that, on the one hand, if $\alpha \in (0, 1)$, then on $\Omega_{1}(\beta)$
\begin{align*}
\lim_{n \to +\infty} \ln(n) \cdot \mathcal{P}_{1}^{n}(v_{\beta, \alpha})
=
\int v_{\beta, \alpha} \, \mathrm{d}\lambda \cdot h_{1},
\end{align*}
and on the other hand, if $\alpha  \in (0, 1/2)$, then on $\Omega_{1}(\varrho)$
\begin{align*}
\lim_{n \to \infty} \ln(n) \cdot \mathcal{P}_{1}^{n}(v_{\varrho, \alpha})
=
\int v_{\varrho, \alpha} \, \mathrm{d}\lambda \cdot h_{1};
\end{align*}
otherwise, if $\alpha \in (1/2, 1)$, then on $\Omega_{1}(\varrho)$
\begin{align*}
\qquad \quad
\liminf_{n \to +\infty} \ln(n) \cdot \mathcal{P}_{1}^{n}(v_{\varrho, \alpha})
= \int v_{\varrho, \alpha} \, \mathrm{d}\lambda \cdot h_{1}
\quad \text{and} \quad
\limsup_{n \to +\infty} \ln(n) \cdot \mathcal{P}_{1}^{n}(v_{\varrho, \alpha})
= +\infty.
\end{align*}

\item\label{3.3.b}  Let $\alpha \in (0, 1)$ and let $\beta = [0; a_{1}, a_{2}, \dots] \in (0, 1]$ be of intermediate $\alpha$-type such that 
\begin{align*}
\displaystyle{\lim_{n \to +\infty} a_{n} = + \infty}.
\end{align*}
Fix $k \in \mathbb{N}$ and let $l = l(k) \coloneqq \min \{ i \in \mathbb{N} \colon a_{m} \geq k \; \text{for all} \; m \geq i  \}$.  For all $j \geq l$, set $n_{k, j} \in \mathbb{N}$ to be the unique integer satisfying $T_{1}^{n_{k, j}}(\beta) = [0; k, a_{j+1}, a_{j+2}, \dots]$ and set
\begin{align*}
\mathscr{S}_{k, j} \coloneqq \frac{(a_{j + 1})^{\alpha} \cdot \ln(n_{k, j})}{(q_{j})^{2\cdot(1-\alpha)}},
\end{align*}
where $q_{n}$ is as defined in \eqref{eq:definition_p_q}.  If $\displaystyle{\limsup_{j \to \infty}  \mathscr{S}_{k, j} = 0}$, then
\begin{align*}
\lim_{n \to +\infty} \ln(n) \cdot \mathcal{P}_{1}^{n}(v_{\beta, \alpha})(1/k)
=
\int v_{\beta, \alpha} \, \mathrm{d}\lambda \cdot h_{1};
\end{align*}
otherwise,
\begin{align*}
\qquad \quad
\liminf_{n \to +\infty} \ln(n) \cdot \mathcal{P}_{1}^{n}(v_{\beta, \alpha})(1/k)
= \int v_{\beta, \alpha} \, \mathrm{d}\lambda \cdot h_{1}
\;\; \text{and} \;\;
\limsup_{n \to +\infty} \ln(n) \cdot \mathcal{P}_{1}^{n}(v_{\beta, \alpha})(1/k)
> \int v_{\beta, \alpha} \, \mathrm{d}\lambda.
\end{align*}
(Note that in this case $\Omega_{1}(\beta) = \{ 1/n \colon n \in \mathbb{N} \} \cup \{ 0 \}$.)
\end{enumerate}
\end{theorem}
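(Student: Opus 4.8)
The plan is to reduce everything to a careful bookkeeping of the ``orbit contributions'' to $\mathcal{P}_1^n(v_{\beta,\alpha})(x)$ for $x \in \Omega_1(\beta)$, using the machinery already assembled for Theorem~\ref{thm:Infinite_1}. Recall that the proof of Theorem~\ref{thm:Infinite_1} splits $v = v_{\beta,\alpha}$ into a ``good'' part (bounded, or of bounded variation away from $\beta$) to which the operator renewal results of \cite{MT:2011} via Theorems~\ref{thm:MT:2001:sec10} and~\ref{prop:jk2011:th7} apply and give the clean limit $\int v\,\mathrm{d}\lambda\cdot h_1$, plus a ``singular'' part supported near $\beta$ whose image under $\mathcal{P}_1^n$ concentrates along the orbit $(T_1^m(\beta))_{m\ge 0}$. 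The point of the present theorem is that on $\Omega_1(\beta)$ the singular part need not vanish, and its $\limsup$ is governed by how fast the intermediate approximants of $\beta$ (equivalently the denominators $q_j$ and the entries $a_{j+1}$) grow relative to the wandering rate $\ln n$. Concretely, I would isolate, for each $j$, the single preimage branch of $T_1^{n}$ landing a neighbourhood of $\beta$ of the appropriate scale onto the point $1/k$ (or onto the relevant periodic point), estimate the corresponding term of $\mathcal{P}_1^n(v_{\beta,\alpha})$ using the bounded-distortion principle and the explicit form of the inverse branches $f_{1,0},f_{1,1}$, and show it is comparable to $\mathscr{S}_{k,j}$ up to multiplicative constants; the sum of all the other terms converges to $\int v\,\mathrm{d}\lambda\cdot h_1$ exactly as in Theorem~\ref{thm:Infinite_1}.

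For part~\ref{3.3.b}: fix $k$, and note $1/k = [0;k,k,\dots]$ has $T_1$-orbit asymptotic to the orbit of $\beta$ since $a_n\to\infty$, so $\Omega_1(\beta)=\{1/n : n\in\mathbb N\}\cup\{0\}$ and $1/k\in\Omega_1(\beta)$. The key computation is: the mass that $\mathcal{P}_1^n(v_{\beta,\alpha})$ receives at $1/k$ from the singularity is, up to bounded distortion, $\sum_{j}\big((f_1^{(m_j)})'(1/k)\big)\cdot v_{\beta,\alpha}\big(\text{point near }\beta\big)$, where the relevant time is $n \approx n_{k,j}$, the derivative of the length-$n_{k,j}$ inverse branch at $1/k$ scales like $q_j^{-2}$ (cylinder-length estimate for continued fractions), and the value of $v_{\beta,\alpha}$ at the corresponding point scales like $(a_{j+1})^{\alpha} q_j^{2\alpha}$ because the relevant cylinder around $\beta$ has length comparable to $1/(a_{j+1}q_j^2)$. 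Multiplying and inserting the wandering-rate normalisation $\ln n_{k,j}$ gives precisely $\mathscr{S}_{k,j}$. Hence if $\limsup_j \mathscr{S}_{k,j}=0$ the singular contribution is $\mathfrak{o}(1/\ln n)$ along every subsequence realising the return to $1/k$, and combined with the good-part limit we get the stated clean convergence; if $\limsup_j \mathscr{S}_{k,j}>0$ we pick the subsequence $n=n_{k,j}$ along which $\mathscr{S}_{k,j}$ stays bounded away from $0$ to force $\limsup \ln(n)\mathcal{P}_1^n(v_{\beta,\alpha})(1/k) > \int v_{\beta,\alpha}\,\mathrm d\lambda$, while the $\liminf$ is still pinned at $\int v_{\beta,\alpha}\,\mathrm d\lambda\cdot h_1$ by choosing times that avoid the return windows (here one uses that the intermediate-$\alpha$-type hypothesis makes the ``tail'' of singular contributions summable, so off the bad subsequence the singular part vanishes). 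The $\liminf$ direction also requires a lower bound $\mathcal{P}_1^n(v)\ge \mathcal{P}_1^n(\text{good part})$, valid since $v$ is positive, which already converges to the right constant.

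For part~\ref{3.3.a}: here the continued fraction entries are bounded, so $\beta$ is of intermediate $\alpha$-type for every $\alpha\in(0,1)$, and $\Omega_1(\beta)$ is finite (a periodic orbit in closure, or one needs $\beta$ non-periodic but with $\Omega_1(\beta)$ small — I would in fact engineer $\beta,\varrho$ so that $\Omega_1(\beta)$ consists of finitely many periodic points, as in the pre-periodic case of Theorem~\ref{thm:Infinite_1}). The relevant quantity reduces, after the same reduction, to $\sup_j (a_{j+1})^\alpha \ln(n_{j})/q_j^{2(1-\alpha)}$ along the returns to a point of $\Omega_1$; with bounded $a_{j+1}$ this is $\asymp \ln(n_j)/q_j^{2(1-\alpha)}$, and since the returns to a fixed periodic point force $q_j$ to grow geometrically while $n_j$ grows at most exponentially in $j$, one gets $\ln(n_j)\asymp j$ versus $q_j^{2(1-\alpha)}\asymp c^{j}$ — so the ratio tends to $0$ for every $\alpha\in(0,1)$, giving the first two displayed limits. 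To produce the $\varrho$ with a genuine dichotomy at $\alpha=1/2$ I would instead choose $\varrho$ so that along the return subsequence $q_j$ grows only sub-geometrically in a controlled way — e.g. with $\ln(n_j)\asymp q_j^{2(1-\alpha_0)}$ at the threshold $\alpha_0=1/2$ — so that $\ln(n_j)/q_j^{2(1-\alpha)}\to 0$ for $\alpha<1/2$ but stays bounded away from $0$ (along a subsequence) for $\alpha>1/2$, exactly by the same argument as in part~\ref{3.3.b}.

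\emph{Main obstacle.} The crux — and the part requiring genuine care rather than bookkeeping — is proving the two-sided estimate that the singular contribution to $\mathcal{P}_1^n(v_{\beta,\alpha})(x)$ at a return time is \emph{comparable} (not merely bounded above) to $\mathscr{S}_{k,j}$: the upper bound is a routine consequence of bounded distortion and the function-class estimates underlying Theorem~\ref{thm:Infinite_1}, but the matching lower bound needs one to verify that no destructive cancellation or smearing occurs — i.e.\ that a definite proportion of the singular mass really does land near $x=1/k$ at time $n_{k,j}$ and is not dispersed across the orbit. This is where the precise geometry of the Farey inverse branches near the indifferent fixed point $0$, and the fact that $1/k$ sits at ``distance'' $\asymp 1/k$ from $0$, must be used; it is also the step where the distinction between $\int v\,\mathrm d\lambda \cdot h_1(x)$ and $\int v\,\mathrm d\lambda$ (note $h_1(x)=1/x$) enters and must be tracked honestly.
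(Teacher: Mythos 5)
Your outline for part~\ref{3.3.b} is essentially the paper's argument: isolate the single tail branch $v_{\beta,\alpha,n,1}=\lvert f_{1,\omega_1(\beta)\vert_n}'\rvert\cdot\lvert\beta-f_{1,\omega_1(\beta)\vert_n}\rvert^{-\alpha}$, note that the remaining (bounded‑variation) part converges via Theorems~\ref{thm:MT:2001:sec10} and~\ref{prop:jk2011:th7}, and show by the mean value theorem and Lemma~\ref{lem:fomegan} that at the return times $n=n_{k,j}$ the tail evaluated at $1/k$ is comparable, with two-sided constants depending only on $k$, to $\mathscr{S}_{k,j}$ (derivative $\asymp q_j^{-2}$, distance $\lvert 1/k-T_1^{n_{k,j}}(\beta)\rvert\asymp 1/(k^2a_{j+1})$), while off the return times it tends to $0$. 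Your worry about "destructive cancellation" is moot since the tail is a single positive term, but the two-sided comparison is indeed the content of the computation.

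Part~\ref{3.3.a}, however, contains a genuine gap. First, you cannot "engineer $\beta,\varrho$ non-periodic with $\Omega_1$ a finite set of periodic points": for a point that is not pre-periodic the $\omega$-limit set is necessarily infinite (a finite $\omega$-limit set is a periodic orbit, and a one-sided orbit asymptotic to a periodic orbit is eventually periodic). The paper's examples have $\Omega_1(\beta)=\Omega_1(\varrho)=\{[0;1,\dots,1,2,\overline{1}]\colon k\in\mathbb{N}_0\}\cup\{\gamma\}$ with $\gamma=[0;\overline{1}]$, a countably infinite set with accumulation point $\gamma$. Second, and more seriously, your proposed mechanism for the dichotomy at $\alpha=1/2$ — choosing $\varrho$ so that $q_j$ grows "sub-geometrically" — is impossible, since $q_j$ always dominates the Fibonacci sequence. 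The error originates in forcing part~\ref{3.3.a} into the $\mathscr{S}_{k,j}$ framework of part~\ref{3.3.b}: with bounded entries the factor $(a_{j+1})^{\alpha}$ is indeed harmless, but the quantity you must control at a point $\zeta\in\Omega_1(\varrho)$ is $\ln(n)\cdot q_{m(n)}^{-2(1-\alpha)}\cdot\lvert T_1^{n}(\varrho)-\zeta\rvert^{-\alpha}$, and for $\zeta=\gamma$ the distance $\lvert T_1^{n}(\varrho)-\gamma\rvert$ is \emph{not} bounded below. The paper takes $\varrho=[0;1,1,2,1,1,1,1,2,\dots]$ with blocks of $2^m$ ones, so that at suitable times $\lvert T_1^{n}(\varrho)-\gamma\rvert\lesssim q_{2^m}(\gamma)^{-2}$ while $q_{m(n)}(\varrho)\asymp q_{2^m}(\gamma)$ up to subexponential factors; the competition is then between exponents $2\alpha$ and $2(1-\alpha)$, which is exactly where $\alpha=1/2$ enters. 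For $\beta$ the blocks of ones have length $2m$ against $m(n)\asymp m^2$, so the same ratio always tends to zero. Without introducing the Diophantine term $\lvert T_1^{n}(\varrho)-\gamma\rvert^{-\alpha}$ your reduction cannot produce the threshold at $\alpha=1/2$ at all.
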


\section{Preliminaries}\label{sec:preliminaries}

We let $\Sigma \coloneqq \{ 0, 1 \}$, $\Sigma^{n} \coloneqq \{ 0, 1 \}^{n}$, for $n \in \mathbb{N}$, and let $\Sigma^{\mathbb{N}}$ denote the set of all infinite words over the alphabet $\Sigma$.  For $\beta \in [0, 1]$ we let $\omega_{r}(\beta)$ denote the infinite word $(\omega_{r, 1}(\beta), \omega_{r, 2}(\beta) \dots) \in \Sigma^{\mathbb{N}}$, where 
\begin{align*}
\omega_{r, n}(\beta) \coloneqq
\begin{cases}
0 & \text{if} \; T_{r}^{n-1}(\beta) \leq 1/2,\\
1 & \text{otherwise}.
\end{cases}
\end{align*}
Unless otherwise stated, let $n \in \mathbb{N}$ be fixed.  For $\omega = (\omega_{1}, \omega_{2}, \dots) \in \Sigma^{\mathbb{N}}$, we set $\omega\vert_{n} \coloneqq (\omega_{1}, \dots, \omega_{n}) \in \Sigma^{n}$ and, for $\varphi = (\varphi_{1}, \varphi_{2}, \dots, \varphi_{n}) \in \Sigma^{n}$, we set
\begin{align*}
f_{r, \varphi} \coloneqq f_{r, \varphi_{1}} \circ \cdots \circ f_{r, \varphi_{n}}
\quad \text{and} \quad
[\varphi]_{r} \coloneqq f_{r, \varphi}([0, 1]).
\end{align*}
The set $[\varphi]_{r}$ is referred to as a \textit{cylinder set of length $n$} with respect to $T_{r}$.  We let $\omega_{r}^{\pm}(\beta)\lvert_{n} \in \Sigma^{n}$ denote unique finite words such that
\begin{align*}
[\omega_{r}^{+}(\beta)\lvert_{n}]_{r} \cap [\omega_{r}(\beta)\lvert_{n}]_{r} \neq \emptyset,
\quad
[\omega_{r}^{-}(\beta)\lvert_{n}]_{r} \cap [\omega_{r}(\beta)\lvert_{n}]_{r} \neq \emptyset
\end{align*}
and such that either one of the following sets of inequalities hold,
\begin{align*}
f_{\omega_{r}^{-}(\beta)\lvert_{n}}(x) \leq f_{\omega_{r}(\beta)\lvert_{n}}(x) < f_{\omega_{r}^{+}(\beta)\lvert_{n}}(x)
\quad \text{or} \quad
f_{\omega_{r}^{-}(\beta)\lvert_{n}}(x) < f_{\omega_{r}(\beta)\lvert_{n}}(x) \leq f_{\omega_{r}^{+}(\beta)\lvert_{n}}(x),
\end{align*}
for all $x \in (0, 1)$.  Note that in the case when there exists $\omega \in \Sigma^{m}$, for some $m \in \mathbb{N}$, such that either $f_{r, \omega}(0) = \beta$ or $f_{r, \omega}(1) = \beta$, then it can occur that $\omega_{r}^{+}(\beta)\vert_{m} = \omega_{r}(\beta)\vert_{m}$ or that $\omega_{r}^{-}(\beta)\vert_{m} = \omega_{r}(\beta)\vert_{m}$.  We call such points \textit{$r$-rationals}.  (Note, if $r = 1$, then the set of $r$-rationals is precisely the set of rational numbers in the closed unit interval $[0, 1]$.)  For ease of notation, we set
\begin{align}\label{eq_tail_set}
\mathfrak{W}_{r, n}(\beta) \coloneqq \{ \omega_{r}^{-}(\beta)\vert_{n}, \omega_{r}(\beta)\vert_{n}, \omega_{r}^{+}(\beta)\vert_{n} \}
\quad \text{and} \quad
[W_{r, n}(\beta)] =  [\omega_{r}^{-}(\beta)\vert_{n}]_{r} \cup [\omega_{r}(\beta)\vert_{n}]_{r} \cup [\omega_{r}^{+}(\beta)\vert_{n}]_{r}.
\end{align}

\begin{lemma}\label{lem:adjacent_cylinders}
Let $r \in [0, 1]$ and $n \in \mathbb{N}$ be fixed.  If $\omega = (\omega_{1}, \omega_{2}, \dots, \omega_{n})$ and $\nu = (\nu_{1}, \nu_{2}, \dots, \nu_{n})$ denote two distinct elements of $\Sigma^{n}$, with $[\omega]_{r} \cap [\nu]_{r} \neq \emptyset$, then there exists a unique $i \in \{ 1, 2, \dots, n \}$ such that $\omega_{i} \neq \nu_{i}$ and $\omega_{j} = \nu_{j}$ for all $j \in \{ 1, 2, \dots, n \} \setminus \{ i \}$.
\end{lemma}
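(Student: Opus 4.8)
The plan is to describe the structure of cylinder sets of length $n$ for the Markov maps $T_r$ combinatorially and then argue by induction on $n$. First I would recall that each $T_r$ is a full-branch Markov map on the two intervals $[0,1/2]$ and $(1/2,1]$, so the inverse branches $f_{r,0}$ and $f_{r,1}$ are defined on all of $[0,1]$, are continuous and strictly monotone, and have images $[\omega_1]_r = f_{r,\omega_1}([0,1])$ which are two closed subintervals of $[0,1]$ meeting only at the common endpoint $f_{r,0}(1) = f_{r,1}(1)$ (the fixed point of the relevant branch). Thus at length $1$ the claim is immediate: two distinct one-letter words have overlapping cylinders only when they are $0$ and $1$, and they differ in exactly one (the only) coordinate.

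For the inductive step, suppose the statement holds for length $n-1$. Given distinct $\omega,\nu\in\Sigma^n$ with $[\omega]_r\cap[\nu]_r\neq\emptyset$, write $\omega=(\omega_1,\omega')$ and $\nu=(\nu_1,\nu')$ with $\omega',\nu'\in\Sigma^{n-1}$. Since $[\omega]_r = f_{r,\omega_1}([\omega']_r)$ and $[\omega']_r\subset[0,1]$, and similarly for $\nu$, I would split into two cases. If $\omega_1=\nu_1=:a$, then because $f_{r,a}$ is injective, $[\omega]_r\cap[\nu]_r\neq\emptyset$ forces $[\omega']_r\cap[\nu']_r\neq\emptyset$; the inductive hypothesis then gives a unique coordinate $i\in\{1,\dots,n-1\}$ in which $\omega'$ and $\nu'$ differ, which corresponds to the coordinate $i+1$ for the length-$n$ words, and they agree in the first coordinate, so we are done. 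If $\omega_1\neq\nu_1$, say $\omega_1=0$ and $\nu_1=1$, then $[\omega]_r\subset[0,1/2]$ and $[\nu]_r\subset(1/2,1]$ up to the shared endpoint $c:=f_{r,0}(1)=f_{r,1}(1)$, so the only way the cylinders can intersect is at the single point $c$; I would check that $c\in[\omega]_r$ forces $[\omega']_r$ to contain $1$ (i.e. $\omega'$ is the ``rightmost'' word, all of whose tail lands at the fixed point), and likewise $c\in[\nu]_r$ forces $[\nu']_r$ to contain $1$, hence $\omega'$ and $\nu'$ are \emph{equal} (both being the unique word whose cylinder contains $1$), so $\omega$ and $\nu$ differ only in the first coordinate, and uniqueness of that coordinate is clear.

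The main obstacle, and the only place where the specific geometry of the family $\{T_r\}$ enters, is the verification in the second case that the endpoint shared by the two length-$1$ cylinders propagates correctly: one must confirm that $f_{r,0}$ maps the right endpoint $1$ of $[0,1]$ to the same point $c$ that $f_{r,1}$ maps $1$ to (this is exactly the Markov/matching condition for the partition $\{[0,1/2],(1/2,1]\}$ of $T_r$), and that no \emph{interior} point of $[\omega']_r$ can map under $f_{r,0}$ to a point of $[\nu]_r$. This follows from strict monotonicity of the branches together with the fact that $f_{r,0}([0,1])$ and $f_{r,1}([0,1])$ overlap in the single point $c$; once this is in hand, the inductive bookkeeping is routine. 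I would close by noting that the uniqueness of the differing index $i$ is automatic throughout, since in the first case it is inherited from the inductive hypothesis and in the second case the words agree in all coordinates except the first.
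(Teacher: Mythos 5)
Your argument is correct, but it runs the induction in the opposite direction from the paper. The paper peels off the \emph{last} symbol: it views two intersecting length-$(n+1)$ cylinders either as the two children of a common length-$n$ parent $[\xi]_r$, or as children of two distinct adjacent parents $[\xi]_r$ and $[\eta]_r$, and in the latter case it must track whether $f_{r,\xi}$ and $f_{r,\eta}$ are order preserving or order reversing in order to identify which child of each parent contains the shared endpoint. You instead peel off the \emph{first} symbol: when $\omega_1=\nu_1$ you use injectivity of the branch to reduce directly to length $n-1$, and when $\omega_1\neq\nu_1$ you localize the intersection at the matching point $f_{r,0}(1)=f_{r,1}(1)=1/2$ and pull it back to the point $1$, concluding that $\omega'$ and $\nu'$ must both equal the unique length-$(n-1)$ word whose cylinder contains $1$ (namely $(1,0,\dots,0)$, since cylinders of a fixed length have pairwise disjoint interiors and only the rightmost one can contain $1$). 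Your route avoids the orientation bookkeeping entirely, at the price of the small auxiliary fact about the unique cylinder containing $1$, which you assert but should spell out (a one-line induction: $1=f_{r,1}(0)$ and $0=f_{r,0}(0)$); the paper's route is slightly heavier but yields as a by-product an explicit description of which pairs of words label adjacent cylinders, which is implicitly reused later. Both arguments are complete and deliver the lemma; only the loose phrase about the tail of $\omega'$ ``landing at the fixed point'' should be tightened, since $1$ itself is not fixed by $T_r$ (rather $T_r(1)=0$ and $0$ is fixed).
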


\begin{proof}
For $n = 1$ we have that $[(0)]_{r} = f_{r, 0}([0, 1]) = [0, 1/2]$ and $[(1)]_{r} = f_{r, 1}([0, 1]) = [1/2, 1]$.  We now proceed by induction on $n$.  Suppose the statement is true for some $n \in \mathbb{N}$.  Let $\omega = (\omega_{1}, \omega_{2}, \dots, \omega_{n+1})$ and $\nu = (\nu_{1}, \nu_{2}, \dots, \nu_{n+1})$ denote two distinct elements of $\Sigma^{n+1}$, with $[\omega]_{r} \cap [\nu]_{r} \neq \emptyset$.  We have two cases to consider, namely, if there exists an $\xi \in \Sigma^{n}$ such that $[\omega]_{r} \cup [\nu]_{r} = [\xi]_{r}$, or not.

In the case that there exists an $\xi = (\xi_{1}, \dots, \xi_{n}) \in \Sigma^{n}$ with $[\omega]_{r} \cap [\nu]_{r} = [\xi]_{r}$, then, by construction, either
\begin{enumerate}
\item $\omega = (\xi_{1}, \xi_{2}, \dots, \xi_{n}, 0)$ and $\nu = (\xi_{1}, \xi_{2}, \dots, \xi_{n}, 1)$, or
\item $\omega = (\xi_{1}, \xi_{2}, \dots, \xi_{n}, 1)$ and $\nu = (\xi_{1}, \xi_{2}, \dots, \xi_{n}, 0)$,
\end{enumerate}
in which case the result follows.

In the case that there does not exist an $\xi \in \Sigma^{n}$ with $[\omega]_{r} \cap [\nu]_{r} = [\xi]_{r}$, then, by construction, there exist $\xi = (\xi_{1}, \xi_{2}, \dots, \xi_{n}), \eta = (\eta_{1}, \eta_{2}, \dots, \eta_{n}) \in \Sigma^{n}$ such that $[\xi]_{r} \cap [\eta]_{r} \neq \emptyset$, $[\omega]_{r} \subset [\xi]_{r}$ and $[\omega]_{r} \subset [\eta]_{r}$. Therefore, by the inductive hypothesis, we have that either $f_{r, \xi}$ is order preserving and $f_{r, \eta}$ is order reversing, or $f_{r, \xi}$ is order reversing and $f_{r, \eta}$ is order preserving.  Assuming the former of these two cases, by construction we have that $\omega = (\xi_{1}, \dots, \xi_{n}, 1)$ and $\nu = (\eta_{1}, \dots, \eta_{n}, 1)$, in which case the result follows.  In the remaining case, namely that $f_{r, \xi}$ is order reversing and $f_{r, \eta}$ is order preserving, by construction we have that $\omega = (\xi_{1}, \dots, \xi_{n}, 0)$ and $\nu = (\eta_{1}, \dots, \eta_{n}, 0)$, which concludes the proof.
\end{proof}

\begin{definition}\label{def:tail}
Given $r \in [0, 1]$, $\alpha \in (0, 1)$ and $\beta \in [0, 1]$, we define the \textit{$r$-tail of the observable $v_{\beta, \alpha} \colon x \mapsto \lvert x - \beta \rvert^{-\alpha}$} by 
\begin{align}\label{eq:vnr}
v_{n, r} = v_{\beta, \alpha, n, r} \coloneqq \mathcal{P}_{r}^{n}(v_{\beta, \alpha} \cdot \mathds{1}_{[W_{r, n}(\beta)]}) = 
\begin{cases}
\displaystyle{\sum_{\omega \in \mathfrak{W}_{r, n}(\beta)} \lvert f_{r, \omega}'(x) \rvert \cdot v_{\beta, \alpha} \circ f_{r, \omega}} & \text{if} \; r \in [0, 1),\\
\lvert f_{r, \omega_{1}(\beta)\vert_{n}}'(x) \rvert \cdot v_{\beta, \alpha} \circ f_{r, \omega_{1}(\beta)\vert_{n}} & \text{if} \; r = 1.
\end{cases}
\end{align}
Further, for $r \in [0, 1]$, $\alpha \in (0, 1)$, $\beta \in [0, 1]$, $n \in \mathbb{N}$ and $\eta > 0$ set
\begin{align*}
A_{n, r, \eta} \coloneqq 
\begin{cases}
\{ x \in [0, 1] \colon v_{n, r}(x) > \eta \} & \text{if} \; r \in [0, 1),\\
\{ x \in [0, 1] \colon \ln(n) \cdot v_{n, r}(x) > \eta \} & \text{if} \; r =1.
\end{cases}
\end{align*}
\end{definition}

\subsection{Functions of bounded variation}\label{sec:f_of_BV}
\mbox{ }

Let $[a, b]$ be a compact interval in $\mathbb{R}$.  The variation of a function $f \colon [a, b] \to \mathbb{C}$ is defined to by
\begin{align*}
V_{[a, b]}(f) \coloneqq \sup_{P} \left\{ \sum_{k = 1}^{n} \lvert f(x_{k}) - f(x_{k -1}) \rvert \right\}.
\end{align*}
Here the supremum is taken over finite partitions $P \coloneqq \{ I_{i} = [x_{i -1}, x_{i}] \colon i \in \{ 1, 2, \dots, n\} \}$, where $a \coloneqq x_{0} < x_{1} < \dots < x_{n - 1} < x_{n} \coloneqq b$, is a chain of points belonging to $[a, b]$, for some $n \in \mathbb{N}$.

Below we state various properties of functions of bounded variation, which we will require in the sequel: Proposition~\ref{Prop:propertiesBV} is concerned with $\mathbb{R}$-valued functions and Proposition \ref{Prop:propertiesBV-Complex} is concerned with $\mathbb{C}$-valued functions.

\begin{proposition}[{\cite[Chapter 2]{BG:1997}}]\label{Prop:propertiesBV}
Let $f, g \in \mathcal{L}^{1}_{\lambda}([a, b])$ be two $\mathbb{R}$-valued functions of bounded variation.
\begin{enumerate}
\item \label{BV:norm} The supremum norm $\lVert f \rVert_{\infty}$ of $f$ is finite.
\item \label{BV:value} For $x \in [a, b]$ we have that $\lvert f(x) \rvert \leq V_{[a, b]}(f) + \lVert f \rVert_{0, 1}/(b - a)$.
\item \label{BV:Lin+prod} The sum, difference and product of two functions of bounded variation is again of bounded variation, and moreover,
\begin{align*}
\qquad\quad
V_{[a, b]}(f \pm g) \leq V_{[a, b]}(f) + V_{[a, b]}(g)
\quad \text{and} \quad
V_{[a, b]}(f \cdot g) \leq V_{[a, b]}(g) \cdot \lVert f \rVert_{\infty} + V_{[a, b]}(f) \cdot \lVert g \rVert_{\infty}.
\end{align*}
\item \label{BV:Split} If $c \in (a, b)$, then $f$ is of bounded variation on the intervals $[a, c]$ and $[c, d]$ and moreover, $V_{[a, b]}(f) = V_{[a, c]}(f) + V_{[c, b]}(f)$.
\item \label{BV:difference} The function $f$ (and $g$) has a representation as the difference of two non-decreasing functions.
\item \label{BV:diff} A function of bounded variation is differentiable Lebesgue almost everywhere.
\item \label{BV:alt} Letting $\tau = \tau_{[a, b]} \coloneqq \{ \psi \in C^{1}([a, b]) \colon \lVert \psi \rVert_{\infty} \leq 1 \; \textup{and} \; \psi(0) = \psi(1) = 0 \}$, we have that
\begin{align*}
V_{[a, b]}(f) = \sup_{\psi \in \tau} \int f \cdot \psi' \mathrm{d}\lambda.
\end{align*}
\end{enumerate}
\end{proposition}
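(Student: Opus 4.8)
The plan is to deduce all seven assertions directly from the definition of the variation together with a handful of elementary facts about partition sums; all of this is classical and is carried out in detail in \cite[Chapter~2]{BG:1997}. First I would record the basic two-point estimate: inserting $x$, and if necessary a second point $y$, into an arbitrary partition of $[a,b]$ and applying the triangle inequality yields $\lvert f(x) - f(y) \rvert \leq V_{[a, b]}(f)$ for all $x, y \in [a, b]$, and hence $\lvert f(x) \rvert \leq \lvert f(y) \rvert + V_{[a, b]}(f)$. Taking an infimum over $y$ gives the finiteness of $\lVert f \rVert_{\infty}$ asserted in part~\ref{BV:norm}; integrating the same inequality in $y$ over $[a, b]$ with respect to $\lambda$ and dividing by $b - a$ gives the bound in part~\ref{BV:value} (recalling $\mu_{0} = \lambda$, so that $\lVert f \rVert_{0, 1} = \int_{[a,b]} \lvert f \rvert \, \mathrm{d}\lambda$).

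For part~\ref{BV:Lin+prod} I would, for a fixed partition $a = x_{0} < \cdots < x_{n} = b$, estimate $\sum_{k} \lvert (f \pm g)(x_{k}) - (f \pm g)(x_{k-1}) \rvert$ term by term with the triangle inequality, and for the product use the identity $f(x_{k}) g(x_{k}) - f(x_{k-1}) g(x_{k-1}) = f(x_{k}) \bigl( g(x_{k}) - g(x_{k-1}) \bigr) + g(x_{k-1}) \bigl( f(x_{k}) - f(x_{k-1}) \bigr)$ together with the sup-norm bounds from part~\ref{BV:norm}, then pass to the supremum over partitions. Part~\ref{BV:Split} follows from the observation that refining a partition never decreases the associated sum (again the triangle inequality), so the supremum defining $V_{[a, b]}(f)$ may be restricted to partitions containing $c$, each of which decomposes into a partition of $[a, c]$ and one of $[c, b]$; the reverse inequality is immediate by concatenation. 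For part~\ref{BV:difference} I would introduce the variation function $\mathcal{V}(x) \coloneqq V_{[a, x]}(f)$, with $\mathcal{V}(a) \coloneqq 0$, which is non-decreasing by part~\ref{BV:Split}; since $V_{[x, y]}(f) \geq \lvert f(y) - f(x) \rvert$ for $x < y$, the function $\mathcal{V} - f$ is also non-decreasing (this is where the hypothesis that $f$ is $\mathbb{R}$-valued enters), so $f = \mathcal{V} - (\mathcal{V} - f)$ is the required decomposition. Part~\ref{BV:diff} then follows by invoking the Lebesgue theorem on almost-everywhere differentiability of monotone functions and subtracting.

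It remains to establish the duality formula in part~\ref{BV:alt}; write $D \coloneqq \sup_{\psi \in \tau} \int f \cdot \psi' \, \mathrm{d}\lambda$. For $D \leq V_{[a, b]}(f)$, I would fix $\psi \in \tau$ and a partition, split $\int_{a}^{b} f \psi' \, \mathrm{d}\lambda = \sum_{k} f(x_{k-1}) \bigl( \psi(x_{k}) - \psi(x_{k-1}) \bigr) + \sum_{k} \int_{x_{k-1}}^{x_{k}} \bigl( f(x) - f(x_{k-1}) \bigr) \psi'(x) \, \mathrm{d}x$, and then apply Abel summation, using that $\psi$ vanishes at the endpoints of $[a, b]$ and $\lVert \psi \rVert_{\infty} \leq 1$, to bound the first sum in absolute value by $V_{[a, b]}(f)$, while the second sum is at most $\lVert \psi' \rVert_{\infty}$ times the product of the mesh of the partition and $V_{[a, b]}(f)$ (using part~\ref{BV:Split}) and so tends to $0$ as the mesh shrinks. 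For the reverse inequality, given $\epsilon > 0$ I would choose a partition with $\sum_{k} \lvert f(x_{k}) - f(x_{k-1}) \rvert > V_{[a, b]}(f) - \epsilon$ and build $\psi \in C^{1}([a, b])$, vanishing at the endpoints and with $\lVert \psi \rVert_{\infty} \leq 1$, obtained by mollifying the step function taking the value $\operatorname{sgn}\bigl( f(x_{k}) - f(x_{k-1}) \bigr)$ on the bulk of each interval $[x_{k-1}, x_{k}]$, so arranged that $\int f \psi' \, \mathrm{d}\lambda$ differs from $\sum_{k} \lvert f(x_{k}) - f(x_{k-1}) \rvert$ by an error controlled by the mollification scale; letting this scale and then $\epsilon$ tend to $0$ gives $D \geq V_{[a, b]}(f)$.

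The two steps that require genuine care are part~\ref{BV:diff}, which rests on the non-trivial Lebesgue differentiation theorem for monotone functions (itself typically proved via a Vitali-type covering argument), and the lower bound in part~\ref{BV:alt}, where an admissible test function must be produced by hand and the mollification error estimated; I expect the latter to be the main obstacle, since the constraints $\psi \in C^{1}$, $\lVert \psi \rVert_{\infty} \leq 1$ and $\psi$ vanishing at the endpoints must be reconciled with making $\int f \psi'$ close to the full partition sum $\sum_{k} \lvert f(x_{k}) - f(x_{k-1}) \rvert$.
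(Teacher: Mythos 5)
The paper does not prove this proposition at all --- it is imported wholesale from \cite[Chapter 2]{BG:1997} --- so your argument can only be measured against the standard textbook proofs. For parts (\ref{BV:norm})--(\ref{BV:diff}) your sketch is exactly those proofs and is correct: the two-point estimate obtained by inserting $x$ and $y$ into a partition, the telescoping identity for the product, the refinement argument for additivity over $[a,c]$ and $[c,b]$, the Jordan decomposition via the non-decreasing function $x \mapsto V_{[a,x]}(f)$, and Lebesgue's differentiation theorem for monotone functions. The upper bound $\sup_{\psi \in \tau} \int f \cdot \psi'\, \mathrm{d}\lambda \leq V_{[a,b]}(f)$ in part (\ref{BV:alt}), via Abel summation and your mesh estimate on the remainder, is also fine.

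The genuine gap is in the lower bound of part (\ref{BV:alt}). The quantity $\sup_{\psi \in \tau} \int f \cdot \psi'\, \mathrm{d}\lambda$ depends only on the $\lambda$-equivalence class of $f$, whereas $V_{[a,b]}(f)$ is a pointwise quantity, so the asserted identity cannot hold for an arbitrary $\mathbb{R}$-valued function of bounded variation: for $f = \mathds{1}_{\{c\}}$ with $c \in (a,b)$ one has $V_{[a,b]}(f) = 2$ while $\int f \cdot \psi'\, \mathrm{d}\lambda = 0$ for every $\psi$. The right-hand side computes the \emph{essential} (distributional) variation, which can be strictly smaller than the pointwise variation. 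Your mollification argument fails at precisely this point: with $\psi'$ concentrated near a partition point $x_{k}$, the integral $\int f \cdot \psi'\, \mathrm{d}\lambda$ recovers the one-sided essential limits of $f$ at $x_{k}$, not the value $f(x_{k})$, so the error is \emph{not} controlled by the mollification scale whenever $f(x_{k})$ differs from both one-sided limits. The statement must therefore be read for a good representative --- say $f$ right-continuous on $[a,b)$ with $f(b) = \lim_{x \uparrow b} f(x)$, which is the situation in which the paper actually applies it, since $\mathrm{BV}(0,1)$ and $\mathrm{BV}(Y)$ consist of right-continuous functions --- and with that normalisation your construction goes through provided the interior partition points are chosen among points of continuity of $f$, or the partition sum is rewritten in terms of one-sided limits before mollifying.
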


\begin{proposition}[{\cite[p.\ 74 f.]{F:2001}}]\label{Prop:propertiesBV-Complex}
Let $f, g \in \mathcal{L}^{1}_{\lambda}([a, b])$ be two $\mathbb{C}$-valued functions of bounded variation.
\begin{enumerate}
\item \label{BV:Complex1} The supremum norm $\lVert f \rVert_{\infty}$ of $f$ is finite.
\item \label{BV:Complex2} The sum, difference and product of two functions of bounded variation is of bounded variation.
\item \label{BV:Complex3} A $\mathbb{C}$-valued function is of bounded variation, if and only if its real and imaginary parts are of bounded variation.  In particular, if $f = \mathfrak{Re}(f) + i \mathfrak{Im}(f)$, then
\begin{align*}
\max\{ V_{[a, b]}(\mathfrak{Re}(f)), V_{[a, b]}(\mathfrak{Im}(f)) \} \leq V_{[a, b]}(f) \leq V_{[a, b]}(\mathfrak{Re}(f)) + V_{[a, b]}(\mathfrak{Im}(f))
\end{align*}
and hence $\max\{ \lVert \mathfrak{Re}(f) \rVert_{\mathrm{BV}}, \lVert \mathfrak{Im}(f) \rVert_{\mathrm{BV}} \} \leq \lVert f \rVert_{\mathrm{BV}}$.
\end{enumerate}
\end{proposition}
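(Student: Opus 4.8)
The plan is to reduce every assertion to the already established real-valued statement Proposition~\ref{Prop:propertiesBV}, using only the elementary estimates $\max\{\lvert x \rvert, \lvert y \rvert\} \le \lvert x + i y \rvert \le \lvert x \rvert + \lvert y \rvert$ valid for all $x, y \in \mathbb{R}$.

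First I would prove part \eqref{BV:Complex3}. Fix a partition $P = \{ [x_{k-1}, x_k] \colon k = 1, \dots, n \}$ of $[a, b]$ and write each increment as $f(x_k) - f(x_{k-1}) = \bigl(\mathfrak{Re}f(x_k) - \mathfrak{Re}f(x_{k-1})\bigr) + i\bigl(\mathfrak{Im}f(x_k) - \mathfrak{Im}f(x_{k-1})\bigr)$. Applying $\max\{\lvert x \rvert, \lvert y \rvert\} \le \lvert x + iy \rvert$ term by term and summing gives $\sum_{k} \lvert \mathfrak{Re}f(x_k) - \mathfrak{Re}f(x_{k-1}) \rvert \le \sum_{k} \lvert f(x_k) - f(x_{k-1}) \rvert \le V_{[a,b]}(f)$, and likewise for $\mathfrak{Im}f$; taking the supremum over $P$ on the left yields $\max\{ V_{[a,b]}(\mathfrak{Re}f), V_{[a,b]}(\mathfrak{Im}f) \} \le V_{[a,b]}(f)$. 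Conversely, applying $\lvert x + iy \rvert \le \lvert x \rvert + \lvert y \rvert$ term by term gives $\sum_{k} \lvert f(x_k) - f(x_{k-1}) \rvert \le \sum_{k} \lvert \mathfrak{Re}f(x_k) - \mathfrak{Re}f(x_{k-1}) \rvert + \sum_{k} \lvert \mathfrak{Im}f(x_k) - \mathfrak{Im}f(x_{k-1}) \rvert \le V_{[a,b]}(\mathfrak{Re}f) + V_{[a,b]}(\mathfrak{Im}f)$, and taking the supremum over $P$ on the left gives the upper bound. These two inequalities also give the claimed equivalence, since the right-hand sides are finite precisely when both $V_{[a,b]}(\mathfrak{Re}f)$ and $V_{[a,b]}(\mathfrak{Im}f)$ are finite. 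Finally, since $\lvert \mathfrak{Re}z \rvert, \lvert \mathfrak{Im}z \rvert \le \lvert z \rvert$ we have $\lVert \mathfrak{Re}f \rVert_{\infty}, \lVert \mathfrak{Im}f \rVert_{\infty} \le \lVert f \rVert_{\infty}$, and combining this with the variation bound just proved yields $\max\{ \lVert \mathfrak{Re}f \rVert_{\mathrm{BV}}, \lVert \mathfrak{Im}f \rVert_{\mathrm{BV}} \} \le \lVert f \rVert_{\mathrm{BV}}$.

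Part \eqref{BV:Complex1} is then immediate: since $f \in \mathcal{L}^{1}_{\lambda}([a,b])$, so are $\mathfrak{Re}f$ and $\mathfrak{Im}f$, and by the equivalence in part \eqref{BV:Complex3} these are real-valued functions of bounded variation, hence $\lVert \mathfrak{Re}f \rVert_{\infty}, \lVert \mathfrak{Im}f \rVert_{\infty} < \infty$ by Proposition~\ref{Prop:propertiesBV}\eqref{BV:norm}, so $\lVert f \rVert_{\infty} \le \lVert \mathfrak{Re}f \rVert_{\infty} + \lVert \mathfrak{Im}f \rVert_{\infty} < \infty$. For part \eqref{BV:Complex2} I would use the identities $\mathfrak{Re}(f \pm g) = \mathfrak{Re}f \pm \mathfrak{Re}g$, $\mathfrak{Im}(f \pm g) = \mathfrak{Im}f \pm \mathfrak{Im}g$, together with $\mathfrak{Re}(fg) = \mathfrak{Re}f \cdot \mathfrak{Re}g - \mathfrak{Im}f \cdot \mathfrak{Im}g$ and $\mathfrak{Im}(fg) = \mathfrak{Re}f \cdot \mathfrak{Im}g + \mathfrak{Im}f \cdot \mathfrak{Re}g$: each right-hand side is a sum and/or product of real-valued functions of bounded variation (which are bounded and measurable, hence in $\mathcal{L}^{1}_{\lambda}([a,b])$, so that the products are again in $\mathcal{L}^{1}_{\lambda}([a,b])$), hence of bounded variation by Proposition~\ref{Prop:propertiesBV}\eqref{BV:Lin+prod}. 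Applying the equivalence of part \eqref{BV:Complex3} once more shows that $f \pm g$ and $f \cdot g$ are of bounded variation.

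There is no genuine obstacle here; the argument is routine. The only points that warrant a little care are to take the supremum over partitions separately for each of the two inequalities in part \eqref{BV:Complex3} (rather than attempting to use a single common partition), and to verify the integrability hypotheses needed to invoke Proposition~\ref{Prop:propertiesBV} for the real and imaginary parts and for the products arising in part \eqref{BV:Complex2}.
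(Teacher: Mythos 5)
Your argument is correct and complete: the termwise use of $\max\{\lvert x\rvert,\lvert y\rvert\}\leq\lvert x+iy\rvert\leq\lvert x\rvert+\lvert y\rvert$ on partition sums, with the suprema taken separately for each direction, gives part (3), and parts (1) and (2) then reduce cleanly to the real-valued Proposition~\ref{Prop:propertiesBV}. Note, however, that the paper offers no proof of this proposition at all --- it is quoted directly from Fremlin \cite{F:2001} --- so there is nothing internal to compare against; what you have written is precisely the standard componentwise verification that the citation stands in for, and your attention to the integrability hypotheses in part (2) and to taking suprema separately in part (3) covers the only points where care is needed.
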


The next proposition follows from \cite[p.\ 74]{F:2001} together with a standard continuity argument.

\begin{proposition}[{\cite[p.\ 74]{F:2001}}]\label{prop:Banach}
The space $\mathrm{BV}(0, 1)$ equipped with the norm $\lVert \cdot \rVert_{\mathrm{BV}}$ is a Banach space.
\end{proposition}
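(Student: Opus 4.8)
The plan is to prove completeness directly; that $\lVert\cdot\rVert_{\mathrm{BV}}$ is a norm on $\mathrm{BV}(0,1)$ and that $\mathrm{BV}(0,1)$ is a vector space is immediate from Propositions~\ref{Prop:propertiesBV} and~\ref{Prop:propertiesBV-Complex} (closure under sums and differences comes from Proposition~\ref{Prop:propertiesBV-Complex}, the triangle inequality from $V_{[0,1]}(f+g)\leq V_{[0,1]}(f)+V_{[0,1]}(g)$ together with the triangle inequality for $\lVert\cdot\rVert_{\infty}$, homogeneity from the definitions of $V_{[0,1]}$ and $\lVert\cdot\rVert_{\infty}$, and $\lVert f\rVert_{\mathrm{BV}}=0$ forces $\lVert f\rVert_{\infty}=0$ and hence $f\equiv 0$). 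So the content of the proposition is that every $\lVert\cdot\rVert_{\mathrm{BV}}$-Cauchy sequence converges in $\mathrm{BV}(0,1)$.

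Let $(f_{n})_{n\in\mathbb{N}}$ be a Cauchy sequence in $\mathrm{BV}(0,1)$. Since $\lVert\cdot\rVert_{\infty}\leq\lVert\cdot\rVert_{\mathrm{BV}}$ directly from the definition of $\lVert\cdot\rVert_{\mathrm{BV}}$, the sequence $(f_{n})$ is uniformly Cauchy and hence converges uniformly to some function $f\colon[0,1]\to\mathbb{C}$. A standard three-$\varepsilon$ argument shows that a uniform limit of right-continuous functions is right-continuous, so $f$ is right-continuous, and $\lVert f\rVert_{\infty}=\lim_{n}\lVert f_{n}\rVert_{\infty}<\infty$ because a Cauchy sequence is bounded in norm.

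It remains to show $f\in\mathrm{BV}(0,1)$ and $\lVert f_{n}-f\rVert_{\mathrm{BV}}\to 0$, and I would obtain both at once from the key device of interchanging a \emph{pointwise} limit with a sum over a \emph{fixed} finite partition. Given $\varepsilon>0$, choose $N$ with $V_{[0,1]}(f_{n}-f_{m})<\varepsilon$ for all $n,m\geq N$. Fix $n\geq N$ and an arbitrary partition $0=x_{0}<x_{1}<\cdots<x_{k}=1$ of $[0,1]$. Using that $f_{m}(x)\to f(x)$ for every $x$,
\begin{align*}
\sum_{i=1}^{k}\lvert (f_{n}-f)(x_{i})-(f_{n}-f)(x_{i-1})\rvert
&=\lim_{m\to\infty}\sum_{i=1}^{k}\lvert (f_{n}-f_{m})(x_{i})-(f_{n}-f_{m})(x_{i-1})\rvert\\
&\leq\liminf_{m\to\infty}V_{[0,1]}(f_{n}-f_{m})\leq\varepsilon .
\end{align*}
Taking the supremum over all partitions gives $V_{[0,1]}(f_{n}-f)\leq\varepsilon$ for every $n\geq N$; in particular $V_{[0,1]}(f)\leq V_{[0,1]}(f_{n})+\varepsilon<\infty$, so $f\in\mathrm{BV}(0,1)$, and combined with the uniform convergence $\lVert f_{n}-f\rVert_{\infty}\to 0$ this yields $\lVert f_{n}-f\rVert_{\mathrm{BV}}=V_{[0,1]}(f_{n}-f)+\lVert f_{n}-f\rVert_{\infty}\to 0$, proving completeness.

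I do not expect any serious obstacle here; the only points requiring care are that membership in $\mathrm{BV}(0,1)$ carries the right-continuity requirement, so one must check that this property is inherited by the uniform limit (this is the ``standard continuity argument'' alluded to after Proposition~\ref{Prop:propertiesBV-Complex}), and that the pointwise limit must be pushed through the sum over a fixed partition \emph{before} passing to the supremum over partitions. For $\mathbb{C}$-valued $f$ the argument above applies verbatim, but one could equally reduce to the real-valued case by treating real and imaginary parts separately via Proposition~\ref{Prop:propertiesBV-Complex}.
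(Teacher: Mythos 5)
Your proof is correct and is precisely the standard argument that the paper outsources to \cite[p.\ 74]{F:2001} plus the ``standard continuity argument'' for right-continuity: dominate $\lVert\cdot\rVert_{\infty}$ by $\lVert\cdot\rVert_{\mathrm{BV}}$ to get a uniform (hence right-continuous) limit, then pass the pointwise limit through a fixed finite partition before taking the supremum to control $V_{[0,1]}(f_{n}-f)$. Nothing is missing.
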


For further details concerning functions of bounded variation see \cite[Chapter 2.3]{BG:1997}, \cite[Section 224]{F:2001} and \cite[Section 2.3]{KL:2005}.

\subsection{Auxiliary results for the case $\mathbf{r \in [0, 1)}$}\label{sec:preliminaries_r_0_1}

\subsubsection{Bounded distortion}

\begin{lemma}[{\cite[Lemma 3.2]{KK:2012} Bounded Distortion}]\label{lem:BD}
Let $r \in [0, 1)$ be fixed.  There exists a sequence $(\varrho_{n})_{n \in \mathbb{N}_{0}}$, dependent on $r$, with $\varrho_{n} > 0$ for each $n \in \mathbb{N}_{0}$ and $\lim_{n \to +\infty} \varrho_{n} = 1$, such that, for all $m, n \in \mathbb{N}_{0}$, $\omega \in \Sigma^{m}$, $\varphi \in \Sigma^{n}$ and $x, y \in [\omega]_{r}$, we have that 
\begin{align*}
\varrho_{m}^{-1} \leq \left\lvert \frac{f_{r, \varphi}'(x)}{f_{r, \varphi}'(y)} \right\rvert \leq \varrho_{m}.
\end{align*}
\end{lemma}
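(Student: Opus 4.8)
The plan is to reduce the claim to two elementary facts about the explicit Möbius inverse branches $f_{r,0}, f_{r,1}$ and then run a standard telescoping argument. First I would record the derivatives: a direct computation from the closed forms gives $\lvert f_{r,0}'(x) \rvert = \lvert f_{r,1}'(x) \rvert = (2-r)/(2-r+r\cdot x)^{2}$ for all $x \in [0,1]$. Since $r < 1$, this yields, on the one hand, the \emph{uniform expansion bound} $\lvert f_{r,i}'(x) \rvert \le (2-r)^{-1} =: \Lambda_r^{-1}$ with $\Lambda_r = 2-r > 1$ (because $(2-r)^{2} \le (2-r+r\cdot x)^{2}$), and on the other hand, writing $\log\lvert f_{r,i}'(x) \rvert = \log(2-r) - 2\log(2-r+r\cdot x)$, the \emph{Lipschitz bound} $\lvert \log\lvert f_{r,i}'(u)\rvert - \log\lvert f_{r,i}'(v)\rvert \rvert \le L_r\lvert u-v\rvert$ with $L_r := 2r/(2-r)$, valid for $i \in \Sigma$ and $u,v \in [0,1]$. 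By the chain rule $\lvert f_{r,\varphi}'(x)\rvert = \prod_{k=1}^{n} \lvert f_{r,\varphi_k}'(z_k(x))\rvert$, where $\varphi = (\varphi_1,\dots,\varphi_n)$ and $z_k(x) \coloneqq f_{r,\varphi_{k+1}} \circ \cdots \circ f_{r,\varphi_n}(x)$ with $z_n(x) \coloneqq x$; iterating the uniform expansion bound also shows that every cylinder $[\psi]_r$ with $\lvert\psi\rvert = \ell$ is an interval of length at most $\Lambda_r^{-\ell}$.

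Next I would set up the telescoping estimate. For $x, y \in [\omega]_r$ with $\lvert\omega\rvert = m$, the point $z_k(x)$ lies in $f_{r,\varphi_{k+1}\cdots\varphi_n}([\omega]_r) = [\varphi_{k+1}\cdots\varphi_n\,\omega]_r$, a cylinder of length $(n-k)+m$, and $z_k(y)$ lies in the very same cylinder, so $\lvert z_k(x) - z_k(y)\rvert \le \Lambda_r^{-((n-k)+m)}$. Taking logarithms in the chain-rule product and applying the Lipschitz bound termwise gives
\[
\Bigl\lvert \log \Bigl\lvert \frac{f_{r,\varphi}'(x)}{f_{r,\varphi}'(y)} \Bigr\rvert \Bigr\rvert
\;\le\; \sum_{k=1}^{n} L_r\,\lvert z_k(x) - z_k(y)\rvert
\;\le\; L_r \sum_{k=1}^{n} \Lambda_r^{-((n-k)+m)}
\;\le\; \frac{L_r\,\Lambda_r^{-m}}{1 - \Lambda_r^{-1}} .
\]
Defining $\varrho_m \coloneqq \exp\bigl( L_r\,\Lambda_r^{-m}/(1-\Lambda_r^{-1}) \bigr)$ for $m \in \mathbb{N}_0$, we have $\varrho_m > 0$ and $\lim_{m\to\infty}\varrho_m = 1$ since $\Lambda_r > 1$, and exponentiating the displayed inequality yields exactly $\varrho_m^{-1} \le \lvert f_{r,\varphi}'(x)/f_{r,\varphi}'(y)\rvert \le \varrho_m$, as required.

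The only point requiring genuine care is the bookkeeping of cylinder lengths in the telescoping sum: after peeling off the outermost $k$ branches, the remaining composition maps the length-$m$ cylinder $[\omega]_r$ onto a cylinder of length \emph{exactly} $(n-k)+m$, so the summand at index $k$ is controlled by $\Lambda_r^{-((n-k)+m)}$ and the series is a geometric tail with ratio $\Lambda_r^{-1} < 1$ and leading factor $\Lambda_r^{-m}$ — which is precisely what forces the distortion bound to decay to the trivial value $1$ as $m \to \infty$. Both preliminary facts are immediate from the explicit formulas for $f_{r,0}, f_{r,1}$; in particular the identity $\lvert f_{r,0}'\rvert \equiv \lvert f_{r,1}'\rvert$ slightly streamlines the computation. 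We remark that for $r = 1$ the uniform expansion fails at the indifferent fixed point $0$, which is exactly why the hypothesis $r \in [0,1)$ is needed here and why a different mechanism must be used in that case.
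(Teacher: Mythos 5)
Your proof is correct and complete: the derivative formulas, the uniform contraction bound $\lvert f_{r,i}'\rvert \le (2-r)^{-1}$, the Lipschitz estimate for $\log\lvert f_{r,i}'\rvert$, and the telescoping over the chain rule (with $z_k(x), z_k(y)$ lying in the common cylinder of length $(n-k)+m$) all check out, and the resulting $\varrho_m = \exp\bigl(L_r\Lambda_r^{-m}/(1-\Lambda_r^{-1})\bigr)$ has exactly the required properties. The paper itself gives no proof — it cites the lemma from Kesseb\"ohmer--Kombrink \cite{KK:2012} — and your argument is precisely the standard bounded-distortion computation that underlies that reference, so there is nothing to reconcile.
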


(Here $\Sigma^{0}$ denotes the set containing the empty set and $f_{r, \emptyset}$ denotes the identity function $[0,1] \ni x \mapsto x$.)

\begin{lemma}\label{lem:BD_neighbouring_cylinders}
Let $n \in \mathbb{N}$ be fixed.  If $\omega = (\omega_{1}, \omega_{2}, \dots, \omega_{n})$ and $\nu = (\nu_{1}, \nu_{2}, \dots, \nu_{n})$ denote two distinct elements of $\Sigma^{n}$, with $[\omega] \cap [\nu] \neq \emptyset$, then there exists a positive constant $K$ such that, for all $x, y \in [0, 1]$,
\begin{align*}
K^{-1} \leq \left\lvert \frac{f'_{r, \omega}(x)}{f'_{r, \nu}(y)} \right\rvert \leq K.
\end{align*}
\end{lemma}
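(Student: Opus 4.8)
The plan is to combine Lemma~\ref{lem:adjacent_cylinders} with the Bounded Distortion Lemma~\ref{lem:BD}. By Lemma~\ref{lem:adjacent_cylinders}, since $\omega$ and $\nu$ are distinct elements of $\Sigma^{n}$ with $[\omega]_{r}\cap[\nu]_{r}\neq\emptyset$, there is a unique index $i\in\{1,\dots,n\}$ with $\omega_{i}\neq\nu_{i}$ and $\omega_{j}=\nu_{j}$ for all $j\neq i$. Write $\varphi\coloneqq(\omega_{1},\dots,\omega_{i-1})=(\nu_{1},\dots,\nu_{i-1})\in\Sigma^{i-1}$ and $\psi\coloneqq(\omega_{i+1},\dots,\omega_{n})=(\nu_{i+1},\dots,\nu_{n})\in\Sigma^{n-i}$, so that $f_{r,\omega}=f_{r,\varphi}\circ f_{r,\omega_{i}}\circ f_{r,\psi}$ and $f_{r,\nu}=f_{r,\varphi}\circ f_{r,\nu_{i}}\circ f_{r,\psi}$.

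First I would apply the chain rule to both quotients in the inequality to be proved, evaluated at arbitrary $x,y\in[0,1]$. The contribution of the outermost block $f_{r,\varphi}$ is controlled as follows: the points $f_{r,\omega_{i}}(f_{r,\psi}(x))$ and $f_{r,\nu_{i}}(f_{r,\psi}(y))$ both lie in the length-one cylinder $[(\omega_{i})]_{r}\cup[(\nu_{i})]_{r}=[0,1]$ (these are $[0,1/2]$ and $[1/2,1]$ in some order), hence in particular both lie in $[\emptyset]_{r}=[0,1]$; more usefully, each lies in one of the two length-one cylinders, so by Lemma~\ref{lem:BD} applied with $m=1$ the ratio $\lvert f_{r,\varphi}'(\cdot)/f_{r,\varphi}'(\cdot)\rvert$ of these two outer derivatives is bounded above and below by $\varrho_{1}^{\pm1}$. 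Next, the innermost block contributes the factor $\lvert f_{r,\psi}'(x)/f_{r,\psi}'(y)\rvert$; here $x,y\in[0,1]=[\emptyset]_{r}$, so Lemma~\ref{lem:BD} with $m=0$ gives the bound $\varrho_{0}^{\pm1}$. Finally the middle factor is $\lvert f_{r,\omega_{i}}'(f_{r,\psi}(x))/f_{r,\nu_{i}}'(f_{r,\psi}(y))\rvert$, a ratio of one of the two fixed, explicit $C^{1}$ contractions $f_{r,0},f_{r,1}$ evaluated on $[0,1]$ over the other; since $f_{r,0}',f_{r,1}'$ are continuous and bounded away from $0$ on the compact set $[0,1]$ (one checks directly $f_{r,0}'(z)=(2-r)/(2-r+rz)^{2}$ and similarly for $f_{r,1}'$, both of which are strictly positive and finite on $[0,1]$), this ratio is bounded above and below by an explicit constant $M=M(r)$.

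Putting these three bounds together yields $K^{-1}\leq\lvert f_{r,\omega}'(x)/f_{r,\nu}'(y)\rvert\leq K$ with $K\coloneqq\varrho_{1}\,\varrho_{0}\,M$, which is independent of $x,y$ and of the particular pair $\omega,\nu$ (it depends only on $r$ and $n$; in fact the dependence on $n$ disappears since $\varrho_{1},\varrho_{0},M$ do not involve $n$). I do not expect a serious obstacle here: the only thing requiring a moment's care is bookkeeping the decomposition at the differing coordinate $i$ and making sure the three nested quotients are each matched to the right instance of Lemma~\ref{lem:BD} (the outer block sees a length-one cylinder, the inner block sees the full interval). The explicit positivity and boundedness of $f_{r,0}',f_{r,1}'$ on $[0,1]$ is immediate from their formulae, so the ``middle factor'' estimate is routine.
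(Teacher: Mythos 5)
Your proposal is correct and is essentially the paper's own proof spelled out: the paper disposes of this lemma in one line, citing exactly the chain rule together with Lemmata~\ref{lem:adjacent_cylinders} and~\ref{lem:BD}, which is precisely the three-factor decomposition you carry out. One small bookkeeping point: for the outer block the two image points $f_{r,\omega_{i}}(f_{r,\psi}(x))$ and $f_{r,\nu_{i}}(f_{r,\psi}(y))$ generally lie in \emph{different} length-one cylinders, so Lemma~\ref{lem:BD} should be invoked with $m=0$ (both points in $[\emptyset]_{r}=[0,1]$), giving the constant $\varrho_{0}$ rather than $\varrho_{1}$ --- your own first observation already covers this, and the final constant is simply $\varrho_{0}^{2}\,M$.
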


\begin{proof}
This is a consequence of the chain rule and Lemmata~\ref{lem:adjacent_cylinders} and~\ref{lem:BD}.
\end{proof}

\subsubsection{Classical results on convergence to equilibrium}

\begin{theorem}[{\cite{B:2000,C:1996,K:1984,R:1983}}]\label{thm:Finite_Ergodic}
For $r \in [0, 1)$ there exist constants $M = M(r) > 0$ and $p = p(r) \in (0, 1)$ such that
\begin{align*}
\left\lVert \mathcal{P}^{n}_{r}(f) - \int f \, \mathrm{d}\lambda \cdot h_{r} \right\rVert_{\mathrm{BV}} \leq M \cdot p^{n} \cdot \lVert f \rVert_{\mathrm{BV}},
\end{align*}
 for all $f \in \mathrm{BV}(0, 1)$.
\end{theorem}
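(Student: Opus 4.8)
The plan is to recover the stated spectral gap by the classical Ionescu-Tulcea--Marinescu / Hennion mechanism: a uniform Lasota--Yorke inequality, quasi-compactness of $\mathcal{P}_{r}$ on $\mathrm{BV}(0,1)$, and identification of the peripheral spectrum. The first step is the Lasota--Yorke inequality for the iterates. Iterating \eqref{eq:alternative_form_Pr} gives $\mathcal{P}_{r}^{n}(f) = \sum_{\varphi \in \Sigma^{n}} \lvert f_{r,\varphi}' \rvert \cdot (f \circ f_{r,\varphi})$, where each $f_{r,\varphi}$ is a monotone Möbius homeomorphism of $[0,1]$ onto the cylinder $[\varphi]_{r}$, so $\lvert f_{r,\varphi}' \rvert$ is monotone and $V_{[0,1]}(f \circ f_{r,\varphi}) = V_{[\varphi]_{r}}(f)$. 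Estimating the variation of each summand by the product rule (Proposition~\ref{Prop:propertiesBV}), controlling the local supremum of $\lvert f\rvert$ on $[\varphi]_{r}$ by $V_{[\varphi]_{r}}(f) + \lambda([\varphi]_{r})^{-1}\int_{[\varphi]_{r}}\lvert f\rvert\,\mathrm{d}\lambda$, and using the bounded distortion estimate of Lemma~\ref{lem:BD} (with $m = 0$) to compare $\sup\lvert f_{r,\varphi}'\rvert$ and $V_{[0,1]}(\lvert f_{r,\varphi}'\rvert)$ with $\lambda([\varphi]_{r})$, together with additivity of variation, $\sum_{\varphi\in\Sigma^{n}}\lambda([\varphi]_{r}) = 1$, and the uniform contraction $\max_{\varphi\in\Sigma^{n}}\lambda([\varphi]_{r}) \le (2-r)^{-n}$ (a consequence of $\inf\lvert T_{r}'\rvert = 2-r$), I expect to obtain
\begin{align*}
V_{[0,1]}\bigl(\mathcal{P}_{r}^{n}(f)\bigr) \le C_{1}\, (2-r)^{-n}\, V_{[0,1]}(f) + C_{2}\, \lVert f \rVert_{0,1},
\end{align*}
with $C_{1}, C_{2}$ depending only on $r$. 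Combined with the $\mathcal{L}^{1}_{\lambda}$-contraction $\lVert\mathcal{P}_{r}^{n}(f)\rVert_{0,1} \le \lVert f\rVert_{0,1}$ and Proposition~\ref{Prop:propertiesBV}, this also yields $\sup_{n}\lVert\mathcal{P}_{r}^{n}\rVert_{\mathrm{BV}} < \infty$.

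For the second step, the closed unit ball of $(\mathrm{BV}(0,1),\lVert\cdot\rVert_{\mathrm{BV}})$ is relatively compact in $\mathcal{L}^{1}_{\lambda}([0,1])$ (Helly selection) and $\mathcal{P}_{r}$ is bounded on both spaces; since the first-order coefficient $C_{1}(2-r)^{-n}$ tends to $0$, Hennion's theorem gives that $\mathcal{P}_{r}$ on $\mathrm{BV}(0,1)$ is quasi-compact with essential spectral radius at most $(2-r)^{-1} < 1$, so $\sigma(\mathcal{P}_{r})\cap\{\lvert z\rvert > (2-r)^{-1}\}$ is a finite set of eigenvalues of finite multiplicity, semisimple on $\partial\mathbb{D}$. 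Now $\mathcal{P}_{r}(h_{r}) = h_{r}$ with $h_{r}>0$ and $h_{r}\in\mathrm{BV}(0,1)$, so $1$ is an eigenvalue; uniqueness of the absolutely continuous invariant probability measure $\mu_{r}$ forces its eigenspace to be $\mathbb{C}\,h_{r}$, and since $\mathcal{P}_{r}^{*}\lambda = \lambda$ the corresponding spectral projection is $\Pi f = \bigl(\int f\,\mathrm{d}\lambda\bigr)h_{r}$. Because $T_{r}$ is a Markov map with both branches onto, it is topologically mixing with aperiodic transition structure, hence $(T_{r},\mu_{r})$ is exact, which excludes any eigenvalue $\zeta$ on $\partial\mathbb{D}$ with $\zeta\ne1$.

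To conclude, write $\mathcal{P}_{r} = \Pi + R$ with $\Pi R = R\Pi = 0$ and $R$ the restriction of $\mathcal{P}_{r}$ to the complementary spectral subspace; the previous step shows the spectral radius of $R$ on $\mathrm{BV}(0,1)$ equals some $\rho < 1$. Fixing $p\in(\rho,1)$, the spectral radius formula provides $M = M(r)>0$ with $\lVert R^{n}\rVert_{\mathrm{BV}} \le M p^{n}$ for all $n$, and since $\mathcal{P}_{r}^{n}(f) - \Pi f = R^{n}f$, the theorem follows. I expect the genuine work to be the Lasota--Yorke estimate: obtaining an additive term with an $n$-independent constant while the multiplicative term decays forces one to treat all $2^{n}$ cylinders simultaneously, which is exactly what Lemmata~\ref{lem:adjacent_cylinders} and~\ref{lem:BD} are designed to make possible. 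Excluding non-trivial peripheral eigenvalues is the conceptually decisive point — it is what fails for the Farey map $r=1$ — but for $r<1$ it reduces to the standard mixing argument, and the remaining functional-analytic steps are routine.
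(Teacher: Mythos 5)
Your outline is correct, but note that the paper does not prove Theorem~\ref{thm:Finite_Ergodic} at all: it is quoted as a classical result with references to Baladi, Collet, Keller and Rychlik, and the Lasota--Yorke inequality plus quasi-compactness plus identification of the peripheral spectrum that you describe is precisely the argument found in those sources (the essential spectral radius $1/(2-r)$ you obtain is even quoted in the paper's introduction). So your proposal reconstructs the standard proof faithfully; there is nothing in the paper itself to compare it against.
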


\begin{lemma}\label{lem:bv_part}
For $r \in [0, 1)$, $\alpha \in (0, 1)$, $\beta \in [0, 1]$ and $v \in \mathfrak{U}_{\beta, \alpha}$ we have that
\begin{align*}
\lim_{n \to \infty} \mathcal{P}_{r}^{n}(v \cdot \mathds{1}_{[0, 1] \setminus [W_{r, n}(\beta)]}) = \int v \, \mathrm{d}\lambda \cdot h_{r},
\end{align*}
uniformly on $[0, 1]$.
\end{lemma}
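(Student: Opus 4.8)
The key idea is that, on the complement $[0,1]\setminus[W_{r,n}(\beta)]$, the observable $v$ has been truncated precisely so as to remove the singular part of $v$ near $\beta$, leaving a function whose $\mathrm{BV}$-norm can be controlled, and then to apply Theorem~\ref{thm:Finite_Ergodic}.  So the first step is to decompose $v\cdot\mathds{1}_{[0,1]\setminus[W_{r,n}(\beta)]}$ into a ``main'' piece that is genuinely of bounded variation with a uniformly bounded norm, and an ``error'' piece supported on the three adjacent cylinders near $\beta$ that are being carved away as $n$ grows.  Concretely, fix a compact neighbourhood structure: by condition~(c) in the definition of $\mathfrak{U}_{\beta,\alpha}$ choose the neighbourhood $U$ of $\beta$ on which $C_{1}|\beta-x|^{-\alpha}\le v(x)\le C_{2}|\beta-x|^{-\alpha}$, fix a slightly smaller closed interval $K_{0}\subset[0,1]\setminus\{\beta\}$ complement, and write $v = v\cdot\mathds{1}_{K_{0}} + v\cdot\mathds{1}_{U'}$ where $U'$ is a neighbourhood of $\beta$ contained in $U$.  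The first summand lies in $\mathrm{BV}(0,1)$ by condition~(b), so $\mathcal{P}_{r}^{n}$ applied to it converges uniformly to $\int v\mathds{1}_{K_{0}}\,\mathrm{d}\lambda\cdot h_{r}$ by Theorem~\ref{thm:Finite_Ergodic}.

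The second, and main, step is to handle $\mathcal{P}_{r}^{n}(v\cdot\mathds{1}_{U'\setminus[W_{r,n}(\beta)]})$.  Here I would use the explicit form \eqref{eq:alternative_form_Pr} of $\mathcal{P}_{r}$ together with bounded distortion.  The point is that $[W_{r,n}(\beta)]$ is a union of at most three cylinders of length $n$ each containing $\beta$ (or abutting it), and for $r\in[0,1)$ each cylinder of length $n$ has Lebesgue measure bounded above by something like $C(2-r)^{-n}$ (this follows from the uniform expansion, or can be extracted from Lemma~\ref{lem:BD}).  Thus $U'\setminus[W_{r,n}(\beta)]$ is, for large $n$, an interval (or union of two intervals) on which $v$ behaves like $|\beta-x|^{-\alpha}$ but from which a neighbourhood of $\beta$ of radius $\asymp(2-r)^{-n}$ has been deleted.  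On this set, $\|v\cdot\mathds{1}_{U'\setminus[W_{r,n}(\beta)]}\|_{\infty}\le C_{2}\bigl(\mathrm{dist}(\beta,[W_{r,n}(\beta)]^{c})\bigr)^{-\alpha}\lesssim (2-r)^{n\alpha}$, and its variation on $U'$ is of the same order, so $\|v\cdot\mathds{1}_{U'\setminus[W_{r,n}(\beta)]}\|_{\mathrm{BV}}\lesssim (2-r)^{n\alpha}$.  Feeding this into Theorem~\ref{thm:Finite_Ergodic} gives
\begin{align*}
\Bigl\lVert \mathcal{P}_{r}^{n}\bigl(v\cdot\mathds{1}_{U'\setminus[W_{r,n}(\beta)]}\bigr) - \textstyle\int_{U'\setminus[W_{r,n}(\beta)]} v\,\mathrm{d}\lambda\cdot h_{r}\Bigr\rVert_{\mathrm{BV}} \le M\cdot p^{n}\cdot C\cdot(2-r)^{n\alpha},
\end{align*}
and since $\alpha<1$ (the crucial use of $\alpha\in(0,1)$) and $p<1$, while $(2-r)\le 2$, one checks that $p^{n}(2-r)^{n\alpha}\to 0$ — provided $p(r)$ can be taken close enough to the spectral gap $1/(2-r)$; more safely, one replaces the single-step estimate by iterating: split $\mathcal{P}_{r}^{n}$ as $\mathcal{P}_{r}^{n-m}\circ\mathcal{P}_{r}^{m}$ with $m$ a fixed fraction of $n$, use that $\mathcal{P}_{r}^{m}$ contracts the singular bump in $\mathcal{L}^{1}$ and smooths it, and then apply Theorem~\ref{thm:Finite_Ergodic} to the result.

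The third step is bookkeeping: combine the two pieces, note that $\int_{K_{0}}v\,\mathrm{d}\lambda + \int_{U'\setminus[W_{r,n}(\beta)]}v\,\mathrm{d}\lambda \to \int_{[0,1]}v\,\mathrm{d}\lambda$ as $n\to\infty$ because $\lambda([W_{r,n}(\beta)])\to 0$ and $v$ is improper Riemann integrable (so its integral over a shrinking neighbourhood of the singularity $\to 0$), and conclude uniform convergence on $[0,1]$ since $h_{r}$ is bounded.  The main obstacle I expect is making the $\mathrm{BV}$-norm bound on the truncated singular bump fight against only the spectral-gap rate $p^{n}$ rather than the sharp essential-spectral-radius rate $(2-r)^{-n}$: one must either be careful about which $p$ Theorem~\ref{thm:Finite_Ergodic} delivers, or — cleaner — first apply a bounded number $j$ of iterates of $\mathcal{P}_{r}$ to reduce the sup-norm of the bump by the genuine factor $(2-r)^{-j}$ per step via \eqref{eq:alternative_form_Pr} and Lemma~\ref{lem:BD}, controlling the concomitant growth in variation, and only then invoke Theorem~\ref{thm:Finite_Ergodic} on a bump whose $\mathrm{BV}$-norm is now $o(1)$.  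A secondary technical point is that $[W_{r,n}(\beta)]$ may, for $r$-rational $\beta$, consist of only two cylinders and may sit at an endpoint of $U'$, so the truncated set $U'\setminus[W_{r,n}(\beta)]$ could be an interval with $\beta$ on its boundary rather than interior; this only changes constants, not the argument.
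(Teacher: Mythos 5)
Your first step (splitting off a fixed compact piece away from $\beta$ and applying Theorem~\ref{thm:Finite_Ergodic} to it) is sound and matches the paper. The gap is in your treatment of the truncated singular bump. You bound $\lVert v\cdot\mathds{1}_{U'\setminus[W_{r,n}(\beta)]}\rVert_{\mathrm{BV}}\lesssim(2-r)^{n\alpha}$ and then invoke Theorem~\ref{thm:Finite_Ergodic}, which yields an error of order $p^{n}\cdot(2-r)^{n\alpha}$; for this to vanish you need $p<(2-r)^{-\alpha}$, a strictly stronger statement than the $p\in(0,1)$ the theorem provides. Knowing that the essential spectral radius is $1/(2-r)$ does not rescue this: there could be isolated eigenvalues of modulus between $(2-r)^{-\alpha}$ and $1$, and nothing in the quoted results rules them out. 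Your fallback --- applying ``a bounded number $j$ of iterates'' to shrink the bump before invoking the spectral estimate --- cannot work either, since the sup-norm of the bump grows like $(2-r)^{n\alpha}$ with $n$, so a bounded number of preliminary contractions by a factor $(2-r)^{-1}$ each still leaves a quantity that blows up with $n$; to kill the bump this way you would need on the order of $n$ iterates, at which point you are no longer using Theorem~\ref{thm:Finite_Ergodic} at all.

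The repair, which is the paper's argument, is to decouple the two scales. Fix an outer level $N$, write $v\cdot\mathds{1}_{[0,1]\setminus[W_{r,n}(\beta)]}=v\cdot\mathds{1}_{[0,1]\setminus[W_{r,N}(\beta)]}+v\cdot\mathds{1}_{[W_{r,N}(\beta)]\setminus[W_{r,n}(\beta)]}$ for $n>N$, and estimate $\mathcal{P}_{r}^{n}$ of the second piece \emph{directly} from the branch expansion $\sum_{\omega}\lvert f_{r,\omega}'\rvert\cdot v\circ f_{r,\omega}$ rather than through the spectral theorem. Bounded distortion (Lemmata~\ref{lem:BD} and~\ref{lem:BD_neighbouring_cylinders}) together with the mean value theorem gives, for each length-$n$ cylinder $[\omega]$ in the annulus that splits off from the neighbourhood of $\beta$ at generation $k$, a contribution $\lesssim\lambda([\omega])\cdot\sup_{[\omega]}v\lesssim\lambda([\omega])^{1-\alpha}\lesssim(2-r)^{-k(1-\alpha)}$, and summing over $k$ from $N+1$ to $n$ yields a bound by the tail $\sum_{k\geq N}(2-r)^{-k(1-\alpha)}$ of a convergent geometric series, uniformly in $n$ and in $x$. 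One then lets $n\to\infty$ first and $N\to\infty$ second; no quantitative comparison between the spectral-gap rate $p$ and the singularity exponent $\alpha$ is ever needed.
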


\begin{proof}
Let $N \in \mathbb{N}$ be fixed.  By Theorem~\ref{thm:Finite_Ergodic}, since $v \cdot \mathds{1}_{[0, 1] \setminus [W_{r}(\beta)\vert_{N}])} \in \mathrm{BV}(0, 1)$, we have that
\begin{align*}
\lim_{n \to +\infty} \mathcal{P}^{n}_{r}(v \cdot \mathds{1}_{[0, 1] \setminus [W_{r}(\beta)\vert_{N}])})
= \int v \cdot \mathds{1}_{[0, 1] \setminus [W_{r}(\beta)\vert_{N}])} \, \mathrm{d}\lambda \cdot h_{r}
\end{align*}
uniformly on $[0, 1]$.  We will shortly show, with the aid of Lemmeta~\ref{lem:adjacent_cylinders} and~\ref{lem:BD}, that, uniformly on $[0, 1]$, there exists a positive constant $K \in \mathbb{R}$ such that for all $x \in [0, 1]$
\begin{align}\label{eq:middle_BV}
\lim_{n \to +\infty} \mathcal{P}^{n}_{r}(v \cdot \mathds{1}_{[W_{r}(\beta)\vert_{N}] \setminus [W_{r}(\beta)\vert_{n}])})(x)
\leq K \sum_{k = N}^{+\infty} (2-r)^{-k(1 - \alpha)}.
\end{align}
As $v$ is improper Riemann integrable and as $\lim_{N \to +\infty} \lambda([\omega_{r}(\beta)\vert_{N}]) = 0$, we have that  
\begin{align*}
\lim_{N \to +\infty} \int v \cdot \mathds{1}_{[0, 1] \setminus [W_{r}(\beta)\vert_{N}])} \, \mathrm{d}\lambda = \int v \, \mathrm{d}\lambda
\end{align*}
and by the properties of geometric series we have that
\begin{align*}
\lim_{N \to +\infty} \sum_{k = N}^{+\infty} (2-r)^{-k} = 0.
\end{align*}
Thus assuming the inequalities given in \eqref{eq:middle_BV}, since $\mathcal{P}_{r}$ is a positive linear operator and since $N$ was chosen arbitrarily, the result follows.

We now show the inequalities stated in \eqref{eq:middle_BV}.  Let $U \subset [0, 1]$ be an open set and let $C_{2}$ be a constant such that Conditions (c) in the definition of $\mathfrak{U}_{\beta, \alpha}$ is satisfied.  Let $n > N \geq 2$ with $[W_{r}(\beta)\vert_{N}] \subseteq U$ be fixed.  For all $x \in [0, 1]$, we have that
\begin{align}\label{eq:derivative_bounds}
(2-r)/4 \leq f_{r, 0}'(x), f_{r, 1}'(x) \leq 1/(2-r).
\end{align}
This in tandem with Lemmata~\ref{lem:BD} and \ref{lem:BD_neighbouring_cylinders} and the mean value theorem, gives that there exists a positive constant $\varrho \in \mathbb{R}$ such that the following chain of inequalities hold, for all $x \in [0, 1]$.
\begin{align*}
\mathcal{P}^{n}_{r}(v \cdot \mathds{1}_{[W_{r}(\beta)\vert_{N}] \setminus [W_{r}(\beta)\vert_{n}])})(x)
\;\; &=  \sum_{\substack{\omega \in \Sigma^{n} \setminus \mathfrak{W}_{r, n}(\beta)\\ [\omega] \subseteq [W_{r}(\beta)\vert_{N}]}} \lvert f_{r, \omega}'(x) \rvert \cdot v \circ f_{r, \omega}(x)\\
&\leq \sum_{\substack{\omega \in \Sigma^{n} \setminus \mathfrak{W}_{r, n}(\beta)\\ [\omega] \subseteq [W_{r}(\beta)\vert_{N}]}}  \varrho \cdot \lambda([\omega]) \cdot \sup \{ v(y) \colon y \in [\omega] \}\\
&\leq  \sum_{k = N+1}^{n} \sum_{\substack{\omega \in \Sigma^{k} \setminus \mathfrak{W}_{r, k}(\beta)\\ [\omega] \subseteq [W_{r}(\beta)\vert_{k-1}]}}  \varrho \cdot \lambda([\omega]) \cdot \sup \{ v(y) \colon y \in [\omega] \}\\
&\leq \sum_{k = N+1}^{n} \sum_{\substack{\omega \in \Sigma^{k} \setminus \mathfrak{W}_{r, k}(\beta)\\ [\omega] \subseteq [W_{r}(\beta)\vert_{k-1}]}} \varrho \cdot C_{2} \cdot \lambda([\omega]) \cdot \sup \{ \lvert y - \beta \rvert^{-\alpha} \colon y \in [\omega] \}\\
&\leq \sum_{k = N+1}^{n} 
2 \cdot \varrho^{2} \cdot C_{2} \cdot \left( \frac{4^{\alpha} \cdot \lambda([\omega_{r}^{-}(\beta)\vert_{k-1}])^{1 - \alpha}}{(2-r)^{1 + \alpha}}
+
\frac{4^{\alpha} \cdot \lambda([\omega_{r}^{+}(\beta)\vert_{k-1}])^{1-\alpha}}{(2-r)^{1+ \alpha}} \right)\\
&\leq \sum_{k = N+1}^{n} 
 \varrho^{2} \cdot C_{2}  \cdot 4^{1 + \alpha} (2-r)^{-(1+ \alpha) - (k - 1)(1-\alpha)}
\end{align*}
This completes the proof.
\end{proof}

\begin{remark}
In the case when one is in the situation of Remark~\ref{rmk:generalisation}, that is when one considers a piecewise $C^{1 + \epsilon}$ Markov interval map $T \colon [0, 1] \circlearrowleft$, a similar result to Lemma~\ref{lem:bv_part} holds true.  Specifically, one can show that for an compact interval $[a ,b]$ of the open interval $(0, 1)$, one has that
\begin{align}\label{eq:generalisation_bv_part}
\lim_{n \to \infty} \mathcal{P}^{n}(v \cdot \mathds{1}_{[0, 1] \setminus [W_{r, n}(\beta)]}) = \int v \, \mathrm{d}\lambda \cdot h_{r},
\end{align}
uniformly on $[a, b]$.  (Here $\mathcal{P}$ denotes the Perron-Frobenius operator of $T$.)  One approaches this by first showing the results for the end points of $a$ and  $b$.  This is obtained by a similar arguments to those presented above, however, instead of using Lemma~\ref{lem:BD_neighbouring_cylinders}, one uses the observation that there exists a positive constant $K$ such that
\begin{align*}
K^{-1} \cdot \min \{ a, 1 - a \} \cdot \lvert g_{n}(0) - g_{n}(1) \rvert \leq
\lvert g_{n}(a) - g_{n}(0) \rvert,
\lvert g_{n}(a) - g_{n}(1) \rvert
\leq
K \cdot \max \{ a, 1 - a \} \cdot \lvert g_{n}(0) - g_{n}(1) \rvert,
\end{align*}
where $g_{n}$ denotes an inverse branch of $T^{n}$.  This follows from an application of the principle of bounded variation and the chain rule.  The result stated in \eqref{eq:generalisation_bv_part} will then follow for all $z \in [a, b]$ by monotonicity, and thus the convergence at $z$ only depends on $a$ and $b$, yielding uniform convergence on the interval $[a, b]$.
\end{remark}

\subsubsection{Convergence of the $r$-tail}

\begin{lemma}\label{lem:tail_r_neq_1}
For $r \in [0, 1)$, $\alpha \in (0, 1)$, $\beta \in [0, 1]$, $n \in \mathbb{N}$ and $\eta > 0$, we have that
\begin{align*}
\mathrm{dim}_{\mathcal{H}}\left(\limsup_{n \to +\infty} A_{n, r, \eta}\right) = 0,
\end{align*}
where $A_{n, r, \eta}$ is as defined in Definition~\ref{def:tail}.
\end{lemma}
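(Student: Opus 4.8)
The plan is to exhibit, for each fixed $\eta>0$, each $N\in\mathbb{N}$ and each $s>0$, a cover of $\limsup_{n\to+\infty}A_{n,r,\eta}$ by intervals the sum of whose $s$-th powers of lengths is the tail of a convergent geometric series; letting $N\to+\infty$ then forces $\mathcal{H}^{s}\bigl(\limsup_{n}A_{n,r,\eta}\bigr)=0$ for every $s>0$, whence $\mathrm{dim}_{\mathcal{H}}\bigl(\limsup_{n}A_{n,r,\eta}\bigr)=0$. First I would describe $A_{n,r,\eta}$ geometrically. By \eqref{eq:vnr}, for $r\in[0,1)$ the function $v_{n,r}$ is a sum of at most three non-negative terms indexed by $\omega\in\mathfrak{W}_{r,n}(\beta)$, namely $x\mapsto|f_{r,\omega}'(x)|\cdot|f_{r,\omega}(x)-\beta|^{-\alpha}$ (recall $v_{\beta,\alpha}(y)=|y-\beta|^{-\alpha}$). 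Hence $v_{n,r}(x)>\eta$ forces $|f_{r,\omega}'(x)|\cdot|f_{r,\omega}(x)-\beta|^{-\alpha}>\eta/3$ for some $\omega\in\mathfrak{W}_{r,n}(\beta)$, so that
\begin{align*}
A_{n,r,\eta}\subseteq\bigcup_{\omega\in\mathfrak{W}_{r,n}(\beta)}B_{n,\omega},\qquad B_{n,\omega}\coloneqq\bigl\{x\in[0,1]\colon|f_{r,\omega}'(x)|\cdot|f_{r,\omega}(x)-\beta|^{-\alpha}>\eta/3\bigr\}.
\end{align*}

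The central step is to bound the length of each $B_{n,\omega}$. Writing $\ell_{\omega}\coloneqq\lambda([\omega]_{r})$ and using that $f_{r,\omega}$ is monotone with $\int_{0}^{1}|f_{r,\omega}'|\,\mathrm{d}\lambda=\ell_{\omega}$, an application of Lemma~\ref{lem:BD} with the parameter $m$ there set to $0$ (so that the relevant cylinder is all of $[0,1]$) gives $\varrho_{0}^{-1}\ell_{\omega}\le|f_{r,\omega}'(x)|\le\varrho_{0}\ell_{\omega}$ for every $x\in[0,1]$. Consequently $x\in B_{n,\omega}$ implies $|f_{r,\omega}(x)-\beta|<(3\varrho_{0}\ell_{\omega}/\eta)^{1/\alpha}$; since $f_{r,\omega}$ is monotone $B_{n,\omega}$ is an interval, and since $|(f_{r,\omega}^{-1})'|\le\varrho_{0}/\ell_{\omega}$ on $[\omega]_{r}$ the length of $B_{n,\omega}$ is at most $2\varrho_{0}(3\varrho_{0}/\eta)^{1/\alpha}\ell_{\omega}^{(1-\alpha)/\alpha}$. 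Finally, by \eqref{eq:derivative_bounds} and the chain rule every length-$n$ cylinder obeys $\ell_{\omega}\le(2-r)^{-n}$, and the exponent $(1-\alpha)/\alpha$ is positive because $\alpha\in(0,1)$; hence $A_{n,r,\eta}$ is contained in a union of at most three intervals, each of length at most $L_{n}\coloneqq2\varrho_{0}(3\varrho_{0}/\eta)^{1/\alpha}(2-r)^{-n(1-\alpha)/\alpha}$.

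To conclude, note that $\limsup_{n}A_{n,r,\eta}\subseteq\bigcup_{n\ge N}A_{n,r,\eta}$ for every $N$, and the right-hand side is covered, for each $n\ge N$, by at most three intervals of length at most $L_{n}$, with $L_{n}\to0$ monotonically. Therefore, for every $s>0$, the standard properties of Hausdorff measure (see \cite{F:2014}) give
\begin{align*}
\mathcal{H}^{s}\Bigl(\limsup_{n\to+\infty}A_{n,r,\eta}\Bigr)\le\liminf_{N\to+\infty}\sum_{n\ge N}3L_{n}^{s}=3\bigl(2\varrho_{0}(3\varrho_{0}/\eta)^{1/\alpha}\bigr)^{s}\,\liminf_{N\to+\infty}\sum_{n\ge N}(2-r)^{-ns(1-\alpha)/\alpha}=0,
\end{align*}
the last equality because $0<(2-r)^{-1}<1$ makes the series geometric with vanishing tail. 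Since this holds for every $s>0$, the Hausdorff dimension of $\limsup_{n}A_{n,r,\eta}$ equals zero. The only genuine obstacle is the distortion bookkeeping in the middle step: one must verify carefully that Lemma~\ref{lem:BD} really converts ``$v_{n,r}(x)$ large'' into ``$x$ lies in a short interval about a preimage of $\beta$'', and in particular that the resulting interval length carries the correct power $(1-\alpha)/\alpha$ of the cylinder length $\ell_{\omega}$; the remaining arguments are routine.
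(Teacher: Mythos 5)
Your proof is correct and follows essentially the same route as the paper's: both cover $\limsup_{n\to+\infty}A_{n,r,\eta}$ by intervals of length $O\bigl((2-r)^{-n(1-\alpha)/\alpha}\bigr)$ obtained from bounded distortion together with the uniform derivative bound \eqref{eq:derivative_bounds}, and then note that the resulting $s$-dimensional Hausdorff sums are tails of convergent geometric series for every $s>0$. The only cosmetic difference is that the paper applies the mean value theorem to centre a single ball at $T_{r}^{n}(\beta)$ in the domain, whereas you pull back a neighbourhood of $\beta$ through each of the three branches indexed by $\mathfrak{W}_{r,n}(\beta)$ --- the same estimate in different clothing (and, strictly speaking, $B_{n,\omega}$ itself need not be an interval, but the superset $\{x\colon|f_{r,\omega}(x)-\beta|<(3\varrho_{0}\ell_{\omega}/\eta)^{1/\alpha}\}$ whose length you actually bound is, so the covering argument is unaffected).
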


\begin{proof}
Set $z = T_{r}^{n}(\beta)$ and observe that $z$ is the unique real number in $[0, 1]$ with $f_{r, \omega_{r}(\beta)\vert_{n}}(z) = \beta$.  By the mean value theorem there exists $u \in (0, 1)$ such that
\begin{align*}
\lvert \beta - f_{r, \omega_{r}(\beta)\vert_{n}}(x)\rvert
= \lvert f_{r, \omega_{r}(\beta)\vert_{n}}(z) - f_{r, \omega_{r}(\beta)\vert_{n}}(x)\rvert
= \lvert x - z \rvert \cdot \lvert f_{r, \omega_{r}(\beta)\vert_{n}}'(u)\rvert
= \lvert x - T_{r}^{n}(\beta) \rvert \cdot \lvert f_{r, \omega_{r}(\beta)\vert_{n}}'(u)\rvert.
\end{align*}
Further, by construction, we have that $\lvert \beta - f_{r, \omega_{r}^{\pm}(\beta)\vert_{n}}(x)\rvert \geq \lvert \beta - f_{r, \omega_{r}(\beta)\vert_{n}}(x)\rvert$.  This in tandem with \eqref{eq:derivative_bounds} and Lemmata~\ref{lem:BD} and~\ref{lem:BD_neighbouring_cylinders}, yields the following set inclusions.
\begin{align*}
A_{n, r, \eta}
= \{ x \in [0, 1] \colon v_{n, r}(x) > \eta \}
&=  \left\{ x \in [0, 1] \colon \textstyle{\sum_{\omega \in \mathfrak{W}_{r, n}(\beta)} \lvert f_{r, \omega}'(x) \rvert \cdot v_{\beta, \alpha} \circ f_{r, \omega}} > \eta \right\}\\
&=  \left\{ x \in [0, 1] \colon \textstyle{\sum_{\omega \in \mathfrak{W}_{r, n}(\beta)} \lvert f_{r, \omega}'(x) \rvert \cdot \lvert x - T_{r}^{n}(\beta) \rvert^{-\alpha} \cdot \lvert f_{r, \omega_{r}(\beta)\vert_{n}}'(u)\rvert^{-\alpha}} > \eta \right\}\\
&\subseteq  \left\{ x \in [0, 1] \colon \lvert x - T_{r}^{n}(\beta) \rvert < (2-r)^{(1 - 1/\alpha) \cdot n} \cdot (3 \cdot \eta \cdot K)^{1/\alpha} \right\}\\
&= B\left(T_{r}^{n}(\beta),(2-r)^{(1 - 1/\alpha) \cdot n} \cdot (3 \cdot \eta \cdot K)^{1/\alpha}\right)
\end{align*}
(Here and throughout we denote by $B(y, l)$, the open Euclidean ball centred at $y$ of radius $l$.)  Hence, given $\delta > 0$, there exists a natural number $M = M(\delta) \in \mathbb{N}$ such that
\begin{align*}
\left\{ 
B\left(T_{r}^{n}(\beta),(2-r)^{(1 - 1/\alpha) \cdot n} \cdot (3 \cdot \eta \cdot K)^{1/\alpha}\right)
\colon
n \geq M \; \text{and} \; n \in \mathbb{N}
\right\}
\end{align*}
is an open $\delta$-cover of $\limsup_{n \to +\infty} A_{n, r, \eta}$.  Therefore, for $s > 0$ and $\delta > 0$, letting $\mathcal{H}_{\delta}^{s}$ denote the $\delta$-approximation to the $s$-dimensional Hausdorff measure, we have that 
\begin{align*}
\mathcal{H}_{\delta}^{s}\left( \limsup_{n \to +\infty} A_{n, r, \eta} \right)
&\leq \sum_{n = M}^{+\infty} \lambda \left( B\left(T_{r}^{n}(\beta),(2-r)^{(1 - 1/\alpha) \cdot n} \cdot (3 \cdot \eta \cdot K)^{1/\alpha}\right)\right)^{s}\\
&\leq \sum_{n = M}^{+\infty} (2-r)^{(1 - 1/\alpha) \cdot s \cdot n} \cdot (3 \cdot \eta \cdot K)^{s/\alpha}\\
&= \frac{(3 \cdot \eta \cdot K)^{s/\alpha} \cdot (2 - r)^{(1 - 1/\alpha) \cdot s \cdot M}}{1 - (2 - r)^{(1- 1/\alpha) \cdot s}}.
\end{align*}
Since $\alpha \in (0, 1)$, this latter quantity is finite for all $s > 0$ and $\delta > 0$, and so $\mathcal{H}^{s}( \limsup_{n \to +\infty} A_{n, r, \eta})$ is finite for all $s > 0$.  This yields that  $\mathrm{dim}_{\mathcal{H}} ( \limsup_{n \to +\infty} A_{n, r, \eta} ) = 0$ as required.  (Here $\mathcal{H}^{s}$ denotes the $s$-dimensional Hausdorff measure.)
\end{proof}

\pagebreak

\subsection{Auxiliary results for the case $\mathbf{r =1}$}

\subsubsection{Infinite ergodic theory revisited}\label{sec:Section_4_3_1}
\mbox{ }

The \textit{transfer operator} $\widehat{T}_{1} \colon \mathcal{L}_{1}^{1}([0, 1]) \circlearrowleft$ of $T_{1}$ is defined by 
\begin{align*}
\widehat{T}_{1}(f) = \frac{\mathcal{P}_{1}(f \cdot h_{1})}{h_{1}}.
\end{align*}
Namely $\widehat{T}_{1}$ is the dual operator of $T_{1}$ with respect to $\mu_{1}$; that is the positive linear operator satisfying
\begin{align*}
\widehat{T}_{1}(f) \coloneqq \frac{\mathrm{d}\nu_{1, f} \circ {T_{1}}^{-1}}{\mathrm{d}\mu_{1}},
\quad \text{where} \quad
\nu_{1, f}(A) \coloneqq \int \mathds{1}_{A} \cdot f \, \mathrm{d}\mu_{1},
\quad \text{for all Borel sets $A \subset [0, 1]$.}
\end{align*}
Note, the domain of definition of $\widehat{T}_{1}$ can be extended to any well-defined real-valued function.

Let $Y \subset [0, 1]$ be such that $\mu_{1}(Y)$ is positive and finite.  For each $n \in \mathbb{N}$, define the \textit{return time operator} $T^{(n)}_{Y} \colon \mathcal{L}^{1}_{1}([0, 1]) \circlearrowleft$ by
\begin{align*}
T^{(n)}_{Y}(f) \coloneqq \mathds{1}_{Y} \cdot \widehat{T}_{1}^{n}(\mathds{1}_{Y} \cdot f),
\end{align*}
and define the \textit{first return time operator} $R_{n} \colon \mathcal{L}^{1}_{1}([0, 1]) \circlearrowleft$ by
\begin{align*}
R_{n}(f) \coloneqq \mathds{1}_{Y} \cdot \widehat{T}^{n}_{1}(\mathds{1}_{\overline{\{ y \in Y \colon \phi_{Y}(y) = n\}}} \cdot f).
\end{align*}
Here $\phi_{Y}(y)$ denotes the \textit{first return time} of $y \in Y$ given by $\phi_{Y}(y) \coloneqq \inf \{ n \in \mathbb{N} \colon T_{1}^{n}(y) \in Y \}$.  

We let $\mathcal{L}^{\infty}(Y)$ denotes the Banach space of equivalence classes $[f]$ of functions, where for each representative $h \colon [0, 1] \to \mathbb{C}$ of $[f]$, we have that $h$ is a Lebesgue measurable function with $\lVert h \rVert_{\mathcal{L}^\infty} \coloneqq \inf \{ \rVert f \rVert_{\infty} \colon \lambda\{ x \colon f(x) \neq h(x) \} = 0 \} < +\infty$ and with $h$ supported on $Y$.  Here $f, g$ belong to the same equivalence class, if and only if, $\lVert f - g \rVert_{\mathcal{L}^{\infty}} = 0$.  Following convention, we will write $f \in \mathcal{L}^{\infty}([0, 1])$ to mean a function $f \colon [0, 1] \to \mathbb{C}$ which belongs to an equivalence class of $\mathcal{L}^{\infty}([0, 1])$.

Let $\mathcal{B}$, equipped with a norm $\lVert \cdot \rVert_{\mathcal{B}}$, be a Banach space of $\mathbb{C}$-valued functions $f \in \mathcal{L}_{1}^{1}([0, 1])$ with domain $[0, 1]$ that are supported on a subset of $Y$ and which satisfy the following five conditions. \label{page:refR1-R5}
\begin{enumerate}
\item[(R1)] If $f \in \mathcal{B}$, then $f \in \mathcal{L}^{\infty}([0, 1])$ and $R(1)(f) \in \mathcal{B}$, where $R(1) \coloneqq \sum_{n=1}^{+\infty} R_{n}$.
\item[(R2)] The inequality $\lVert f \rVert_{\mathcal{L}^{\infty}} \leq \lVert f \rVert_{\mathcal{B}}$ holds for all $f \in \mathcal{B}$.
\item[(R3)] \textit{The Renewal Equation}: For all $n \in \mathbb{N}$, the operator $R_{n}\vert_{\mathcal{B}}$ is bounded and linear.  Moreover, there exists a constant $C > 0$, such that $\lVert R_{n} \lVert \leq C \cdot \mu_{1}(\{ y \in Y \colon \phi_{Y}(y) = n \})$.
\item[(R4)] \textit{Spectral Gap}: The operator $R(1)$ restricted to $\mathcal{B}$ has a simple isolated eigenvalue at $1$.
\item[(R5)] \textit{Aperiodocity}: For $z \in \mathbb{D} \setminus \{ 1 \}$, the value $1$ is not in the spectrum of $R(z) \coloneqq \sum_{n = 1}^{+\infty} z^{n} \, R_{n} \colon \mathcal{B} \circlearrowleft$.  (Here $\mathbb{D}$ denotes the closed unit ball in $\mathbb{C}$.)
\end{enumerate}

\begin{theorem}[{\cite[Theorem 2.1]{MT:2011}}]\label{thm:MT:2011:thm2.1}
If conditions (R1) to (R5) are satisfied, then the limit
\begin{align*}
\lim_{n \to +\infty} \sup_{f \in \mathcal{B}; \; \lVert f \rVert_{\mathcal{B}} \leq 1} \left\lVert \ln(n) \cdot T^{(n)}_{Y}(f) - \int_{Y} f \, \mathrm{d}\mu \right\rVert_{\mathcal{B}},
\end{align*}
exists and converges to zero.
\end{theorem}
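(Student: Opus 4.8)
The plan is to run the operator renewal argument of Melbourne and Terhesiu, with particular attention to the Tauberian step (throughout, $\mu = \mu_1$). Form the operator-valued power series $R(z) \coloneqq \sum_{n \geq 1} z^n R_n$ and $T(z) \coloneqq \sum_{n \geq 1} z^n T^{(n)}_{Y}$ acting on $\mathcal{B}$. By (R3) the series defining $R(z)$ converges in operator norm for every $z$ in the closed unit disc, and $z \mapsto R(z)$ is continuous on $\overline{\mathbb{D}}$ and analytic on the open disc. Decomposing an orbit segment from $Y$ to $Y$ according to its successive first returns yields $T^{(n)}_{Y} = \sum_{k \geq 1} \sum_{n_1 + \cdots + n_k = n} R_{n_k} \circ \cdots \circ R_{n_1}$ for all $n \in \mathbb{N}$, equivalently the renewal recursion $T^{(n)}_{Y} = R_n + \sum_{k=1}^{n-1} R_k \circ T^{(n-k)}_{Y}$; summing in $n$ gives $I + T(z) = \sum_{k \geq 0} R(z)^k = (I - R(z))^{-1}$, valid wherever the spectral radius of $R(z)$ is strictly less than $1$, which by (R5) is all of $\overline{\mathbb{D}}$ apart from a neighbourhood of $z = 1$.

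Next one localises the resolvent at $z = 1$. By (R4), $1$ is a simple isolated eigenvalue of $R(1) = \sum_{n \geq 1} R_n$; since $R(1)$ is the transfer operator, with respect to $\mu_1|_Y$, of the first-return map of $T_1$ to $Y$, it fixes $\mathds{1}_Y$, and its rank-one eigenprojection is $P(f) = \mu_1(Y)^{-1}\left(\int_Y f \, \mathrm{d}\mu_1\right)\mathds{1}_Y$. Analytic perturbation theory applied to $z \mapsto R(z)$ provides a neighbourhood $U$ of $1$ on which $R(z) = \lambda(z)\,P(z) + N(z)$, where $\lambda(z)$ is the perturbed eigenvalue with $\lambda(1) = 1$, $P(z)$ is the (rank-one) perturbed eigenprojection depending continuously on $z$ with $P(1) = P$, and the spectral radius of $N(z)$ stays bounded away from $1$ uniformly on $U$. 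Hence, on $U$,
\begin{align*}
(I - R(z))^{-1} = \frac{P(z)}{1 - \lambda(z)} + \bigl(I - N(z)\bigr)^{-1},
\end{align*}
with the second summand bounded in operator norm on $\overline{U}$; together with the first paragraph, $(I - R(z))^{-1}$ is bounded in operator norm on $\overline{\mathbb{D}}$ away from $z = 1$.

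It remains to determine the order of the singularity. Applying $P(z)$ to the renewal relation shows that $1 - \lambda(z)$ is, to leading order, $\mu_1(Y)^{-1}\sum_{n \geq 1}(1 - z^n)\,\mu_1(\{\phi_Y = n\})$; using that the return-time tail $\mu_1(\{\phi_Y > n\})$ is regularly varying of index $-1$ with wandering rate $w_n = \sum_{k=0}^{n-1}\mu_1(\{\phi_Y > k\}) \sim \ln n$, a Karamata-type computation gives $1 - \lambda(z) \sim \mu_1(Y)^{-1}\,(1-z)\,\lvert\ln(1-z)\rvert$ as $z \uparrow 1$. Consequently, in operator norm, $(I - R(z))^{-1} \sim \mu_1(Y)\,(1-z)^{-1}\,\lvert\ln(1-z)\rvert^{-1}\,P$ near $z = 1$ and is bounded on the rest of $\overline{\mathbb{D}}$. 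One then extracts the $n$-th Taylor coefficient of both sides to obtain $\ln(n)\,T^{(n)}_{Y} \to P_0$ in operator norm on $\mathcal{B}$, where $P_0(f) = \left(\int_Y f \, \mathrm{d}\mu_1\right)\mathds{1}_Y$; this is exactly the assertion of the theorem, since operator-norm convergence is the stated uniform convergence over the unit ball of $\mathcal{B}$, and a fortiori the limit exists.

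The main obstacle is this last coefficient-extraction step. Karamata's Tauberian theorem applied term by term only delivers the Ces\`aro (uniform dual-ergodic) statement $\sum_{k=1}^{n} T^{(k)}_{Y}(f) \sim \frac{n}{\ln n}\int_Y f \, \mathrm{d}\mu_1$; promoting this to the genuine asymptotic for $T^{(n)}_{Y}$ itself amounts to an operator strong renewal theorem, and it is precisely here that subtle issues in this circle of ideas have surfaced (cf.\ \cite{KKSS:2014}). The plan is to carry it out by a strong-renewal argument in the spirit of Garsia and Lamperti \cite{GL:1969}, Erickson \cite{E:1970} and Doney \cite{D:1997}: using the renewal recursion $T^{(n)}_{Y} = R_n + \sum_{k=1}^{n-1} R_k \circ T^{(n-k)}_{Y}$, one compares consecutive operators $T^{(n)}_{Y}$ and $T^{(n+1)}_{Y}$ and compares each with its Ces\`aro average, exploiting on the one hand a sufficient regularity of the return-time tail (in the situations of interest, the comparability $\mu_1(\{\phi_Y = n\}) \asymp n^{-2}$) and on the other hand the uniform control of $N(z)$ along the \emph{entire} unit circle furnished by (R5). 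Keeping every estimate at the level of operator norms then yields the conclusion uniformly over $\{ f \in \mathcal{B} \colon \lVert f \rVert_{\mathcal{B}} \leq 1\}$, which gives the theorem.
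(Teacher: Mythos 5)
The paper does not actually prove this statement: it is quoted verbatim from \cite[Theorem 2.1]{MT:2011} and used as a black box (the paper only supplies the verification that $\mathrm{BV}(Y)$ satisfies (R1)--(R5), and a self-contained proof of the \emph{other} imported result, Theorem~\ref{thm:MT:2001:sec10}). Your outline is a faithful reconstruction of the Melbourne--Terhesiu argument: the renewal identity $I + T(z) = (I-R(z))^{-1}$, the spectral decomposition $R(z) = \lambda(z)P(z) + N(z)$ near $z=1$ furnished by (R4) and analytic perturbation theory, the asymptotic $1-\lambda(z) \sim \mu_1(Y)^{-1}(1-z)\lvert\ln(1-z)\rvert$ coming from the return-time tail, and invertibility of $I-R(z)$ on $\overline{\mathbb{D}}\setminus\{1\}$ from (R5). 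The normalisations you record (eigenprojection $P(f)=\mu_1(Y)^{-1}\int_Y f\,\mathrm{d}\mu_1\cdot\mathds{1}_Y$, limit operator $P_0(f)=\int_Y f\,\mathrm{d}\mu_1\cdot\mathds{1}_Y$) are consistent with the statement.

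The genuine gap is the one you yourself flag and then defer: the coefficient-extraction step is the entire content of the theorem, and ``a strong-renewal argument in the spirit of Garsia--Lamperti, Erickson and Doney'' is an announcement, not a proof. Concretely, writing $T^{(n)}_Y = \frac{1}{2\pi}\int_{-\pi}^{\pi}\mathrm{e}^{-\mathrm{i}n\theta}\,(I-R(\mathrm{e}^{\mathrm{i}\theta}))^{-1}\,\mathrm{d}\theta$, the local expansion at $\theta=0$ produces the main term $P_0/\ln n$, but the contribution of the rest of the circle must then be shown to be $\mathfrak{o}(1/\ln n)$ \emph{in operator norm}; the Riemann--Lebesgue lemma, which is all that continuity of $\theta\mapsto(I-R(\mathrm{e}^{\mathrm{i}\theta}))^{-1}$ (i.e.\ (R3) plus (R5)) gives for free, yields only $\mathfrak{o}(1)$. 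The scalar strong renewal theorems you invoke rely on positivity and monotonicity of the renewal sequence $(u_n)$, properties that do not transfer verbatim to the operator-valued sequence $(T^{(n)}_Y)$; Melbourne and Terhesiu get around this with a careful quantitative analysis of the Fourier integral using the bound $\lVert R_n\rVert \leq C\,\mu_1(\phi_Y=n)$ from (R3), and this is exactly the delicate point where, as \cite{KKSS:2014} showed for the companion result Theorem~\ref{thm:MT:2001:sec10}, errors in this circle of ideas have actually occurred. Until that estimate is carried out, the proposal establishes only the Ces\`aro version $\sum_{k\leq n}T^{(k)}_Y(f)\sim (n/\ln n)\int_Y f\,\mathrm{d}\mu_1$, which is strictly weaker than the claimed limit.
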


In the following proposition, we give an example of when the conditions (R1) to (R5) are satisfied.   This, we believe is a folklore result, a full proof of the result can be found in the Section~\ref{sec:appendix}.

\begin{proposition}\label{prop:rmk1}
Let $Y = [1/2, 1]$ and let $\mathrm{BV}(Y)$ denote the space of $\mathbb{C}$-valued right-continuous functions with domain $[0, 1]$ that are supported on a subset of $Y$ and which are of bounded variation.  We define, for all $f \in \mathrm{BV}(Y)$, the norm $\lVert f \rVert_{\mathrm{BV}} \coloneqq \lVert f \rVert_{\infty} + V_{Y}(f)$.  The space $\mathrm{BV}(Y)$ is a Banach space (Proposition~\ref{prop:Banach}) and satisfies conditions (R1)~to~(R5).
\end{proposition}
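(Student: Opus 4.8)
The plan is to verify conditions (R1) through (R5) for the Banach space $\mathrm{BV}(Y)$ with $Y = [1/2, 1]$, by exploiting the well-known fact that the Farey map $T_1$ induces a Gauss-like map on $Y$ whose first-return branches are uniformly expanding with good distortion properties. First I would set up the combinatorics of the first return time to $Y$: since $T_1^{-1}(Y)\cap[0,1/2]$ corresponds to the indifferent branch near $0$, a point $y \in Y$ returns to $Y$ after exactly $n$ steps precisely when it lies in a specific subinterval $Y_n \subset Y$, and the inverse branch $\psi_n$ of $T_1^n$ mapping $Y$ onto $Y_n$ is the composition $f_{1,1} \circ f_{1,0}^{n-1}$ (up to endpoint conventions). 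The key analytic input is that $\mu_1(Y_n) = \mu_1(\{y \in Y \colon \phi_Y(y) = n\}) \asymp n^{-2}$, which follows from the explicit form $h_1(x) = 1/x$ together with $\lambda(f_{1,0}^{n-1}([0,1])) \asymp n^{-1}$; this tail asymptotics is exactly what makes $\ln(n)$ the correct wandering rate.

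Then I would check the conditions in turn. For (R2), $\lVert f \rVert_{\mathcal{L}^\infty} \leq \lVert f \rVert_\infty \leq \lVert f \rVert_{\mathrm{BV}}$ is immediate from the definition of the $\mathrm{BV}$-norm. For (R3), I would use the representation $R_n(f) = \mathds{1}_Y \cdot (\psi_n' \cdot f \circ \psi_n)$ (reading $R_n$ back through the definition of $\widehat{T}_1$ and $h_1$), and bound $V_Y(\psi_n' \cdot f \circ \psi_n)$ and $\lVert \psi_n' \cdot f \circ \psi_n\rVert_\infty$ using Proposition~\ref{Prop:propertiesBV}\eqref{BV:Lin+prod}: the factor $\lVert \psi_n'\rVert_\infty$ is comparable to $\lambda(Y_n)$ by the mean value theorem, while $V_Y(\psi_n')$ is controlled by bounded distortion for the inverse branches of $T_1^n$ restricted to $Y$. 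This gives $\lVert R_n\rVert_{\mathrm{BV}} \leq C \cdot \lambda(Y_n) \asymp C' \cdot \mu_1(Y_n)$, since $h_1$ is bounded above and below on $Y$. For (R1), that $R(1)(f) = \widehat{T}_Y(f)$ where $\widehat{T}_Y$ is the transfer operator of the first-return (Gauss-type) map, so $R(1)(f) \in \mathrm{BV}(Y)$ follows from the standard Lasota–Yorke estimate for that uniformly expanding Markov map, together with $f \in \mathcal{L}^\infty$ from (R2). For (R4), the first-return map of $T_1$ to $Y$ is (conjugate to) the Gauss map, which is exact and has a spectral gap on $\mathrm{BV}$: this is classical (e.g.\ via Rychlik's theorem or Hennion's theorem applied to the Lasota–Yorke inequality plus topological mixing), giving a simple isolated eigenvalue $1$ with the rest of the spectrum inside a disk of radius $<1$. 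For (R5), aperiodicity, I would argue that for $z \in \mathbb{D}\setminus\{1\}$ the operator $R(z)$ cannot have $1$ as an eigenvalue: an eigenfunction would force a coboundary/return-time relation $\sum_n z^n R_n g = g$, and since the return times $\phi_Y$ take all sufficiently large integer values (the gcd of the return-time set is $1$ because consecutive return times $n$ and $n+1$ both occur), a standard argument (as in Aaronson–Denker or Gouëzel) rules this out unless $z = 1$.

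The main obstacle I expect is condition (R4) — verifying the spectral gap on the point-separating space $\mathrm{BV}(Y)$ rather than on the more usual $\mathcal{L}^1$-based quotient space. One must be careful that the first-return map is genuinely full-branched Markov (it is — the branches $T_1^n|_{Y_n}$ map $Y_n$ onto all of $Y$, the defining feature of the Gauss map), so that a clean Lasota–Yorke inequality of the form $V_Y(\widehat{T}_Y f) \leq \theta V_Y(f) + C\lVert f\rVert_{\mathcal{L}^1}$ with $\theta < 1$ holds; quasi-compactness on $\mathrm{BV}(Y)$ then follows from Hennion's theorem, and simplicity of the leading eigenvalue from mixing. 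A secondary delicate point is the precise tail estimate $\mu_1(\{\phi_Y = n\}) \asymp n^{-2}$ needed in (R3) and implicitly for the conclusion of Theorem~\ref{thm:MT:2011:thm2.1} to produce the $\ln(n)$ normalization; this requires the explicit computation with $h_1(x)=1/x$ and the known asymptotics $f_{1,0}^{n}(x) \asymp 1/n$ near the indifferent fixed point. Since the full verification is routine but lengthy, I would carry out the details in Section~\ref{sec:appendix} as promised, presenting here only the reduction to these standard facts about the Gauss map and the Farey return-time statistics.
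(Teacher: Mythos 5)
Your outline is correct and follows essentially the same skeleton as the paper's proof: direct norm estimates on the explicit branches $f_{1,1}\circ f_{1,0}^{n-1}$ for (R3), a Lasota--Yorke inequality plus Hennion's quasi-compactness theorem for (R4), and the standard coboundary/return-time argument for (R5). The main difference is that you propose to import the spectral gap from classical results for the Gauss map (Rychlik, topological mixing), whereas the paper verifies everything by hand: it exhibits $\mathds{1}_{\mathrm{int}(Y)}$ as an explicit eigenfunction of $R(1)$ via a telescoping sum, derives the Doeblin--Fortet inequality $V_{Y}(R(1)(f)) \leq c\,\lVert f \rVert_{\infty} + V_{Y}(f)/2$ from the variational characterisation $V_{Y}(g) = \sup_{\psi\in\tau}\int g\,\psi'\,\mathrm{d}\lambda$ with explicit test functions supported on the cylinders $U_{k}$, and --- crucially for the point you yourself flag as the main obstacle --- proves the pre-compactness hypothesis of Hennion's theorem by a bespoke Helly-type selection argument producing a pointwise (not merely a.e.-class) limit, which is exactly what the point-separating space $\mathrm{BV}(Y)$ requires and what the classical $\mathcal{L}^{1}$-quotient references do not directly supply. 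Two smaller points your sketch leaves implicit but the paper handles explicitly: in (R5) the interior case $\lvert z\rvert<1$ is dealt with separately via the renewal identity $R(z)\circ T(z) = T(z) - I$ (ruling out $1$ in the spectrum, not just as an eigenvalue), and on the circle the a.e.\ identity $W(f)=f$ obtained from the $\mathcal{L}^{2}$-isometry argument is upgraded to an everywhere identity using right-continuity before invoking ergodicity and periodic points of every return time. None of these is a gap in your plan, but they are where the actual work lies.
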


For $k \in \mathbb{N}_{0}$, set 
\begin{align*}
Y_{k} \coloneqq T_{1}^{-k}(Y) \setminus \bigcup_{j = 0}^{k - 1} T_{1}^{-j}(Y).
\end{align*}
Indeed, if $Y = [1/2, 1]$, then $Y_{0} = Y$ and $Y_{k} = [1/(k+2), 1/(k+1))$ for $k \geq 1$.  For each $f \colon [0, 1] \to \mathbb{C}$ with $\lVert f \rVert_{\infty} < \infty$, we let $\widetilde{f}_{k} \coloneqq \mathds{1}_{Y_{k}} \cdot f$ and we write $f \in \mathcal{B}([0, 1])$, if $f \in \mathcal{L}^{1}_{1}([0, 1])$ and $\widehat{T}^{k}_{1}(\widetilde{f}_{k}) \in \mathcal{B}$ for all $k \in \mathbb{N}_{0}$.

By definition, for a measurable function $g \colon [0, 1] \to \mathbb{C}$ with $\lVert g \rVert_{\infty} < +\infty$ and for $f \in \mathcal{L}^{1}_{1}([0, 1])$, we have that
\begin{align*}
\int \widehat{T}_{1}(f) \cdot g \,\mathrm{d}\mu_{1} = \int f \cdot g \circ T_{1} \,\mathrm{d}\mu_{1}.
\end{align*}
Moreover, since $\widehat{T}_{1}(f) = \mathcal{P}_{1}(f \cdot h_{1}) / h_{1}$, the operator $\widehat{T}_{1}$ can be written in terms of the inverse branches of $T_{1}$, namely
\begin{align}\label{eqn:FareyDual}
\widehat{T}_{1}(f)(x) = f_{1, 0}(x) \cdot f \circ f_{1, 1}(x) + f_{1, 1}(x) \cdot f \circ f_{1, 0}(x).
\end{align}
This implies, on $[0, 1]$, for all $n \in \mathbb{N}$ and integers $j > n$, that $\mathds{1}_{Y} \cdot \widehat{T}^{n}_{1}(\widetilde{f}_{j}) = 0$ and $\widehat{T}_{1}^{n}(\widetilde{f}_{n}) = \mathds{1}_{Y} \cdot \widehat{T}_{1}^{n}(\widetilde{f}_{n})$, and hence, that
\begin{align*}
\mathds{1}_{Y} \cdot \widehat{T}_{1}^{n}(f)
= \sum_{j = 0}^{n} \mathds{1}_{Y} \cdot \widehat{T}_{1}^{n-j}( \mathds{1}_{Y} \cdot \widehat{T}_{1}^{j}(\widetilde{f}_{j})).
\end{align*}
See \cite[p. 11]{KS:2008} or \cite[Section 3.3.2]{JK:2011} for further details on the transfer operator $\widehat{T}_{1}$, the Perron Frobenius operator $\mathcal{P}_{1}$ and the equalities given above.

\begin{theorem}[{\cite[Theorem 10.4]{MT:2011}}]\label{thm:MT:2001:sec10}
Let $f \in \mathcal{B}([0, 1])$ be such that $\lVert f \rVert_{\infty} < + \infty$.  If
\begin{align}\label{eq:assumption_MT}
\sum_{k = 0}^{+\infty} \lVert \widehat{T}_{1}^{k}(\widetilde{f}_{k}) \rVert_{\infty} < +\infty,
\end{align}
then on $Y$
\begin{align*}
\lim_{n \to +\infty} \ln(n) \cdot \widehat{T}_{1}^{n}(f) = \int f \, \mathrm{d}\mu.
\end{align*}
\end{theorem}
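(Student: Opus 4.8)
The plan is to combine the decomposition $\mathds{1}_{Y}\widehat{T}_{1}^{n}(f)=\sum_{j=0}^{n}T_{Y}^{(n-j)}(g_{j})$ recorded above, where $g_{j}\coloneqq\widehat{T}_{1}^{j}(\widetilde{f}_{j})\in\mathcal{B}=\mathrm{BV}(Y)$ (this membership is exactly what $f\in\mathcal{B}([0,1])$ means), with the uniform conclusion of Theorem~\ref{thm:MT:2011:thm2.1}, available for $\mathcal{B}=\mathrm{BV}(Y)$ by Proposition~\ref{prop:rmk1}, and with a sharp pointwise bound on the transported tails $g_{j}$. Writing $c_{j}\coloneqq\lVert g_{j}\rVert_{\infty}$, the hypothesis says precisely that $\sum_{j}c_{j}<+\infty$, so $\epsilon_{J}\coloneqq\sum_{j>J}c_{j}\to 0$. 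First I would observe that, since the $Y_{j}$ partition $[0,1]$ up to a $\mu$-null set and $\widehat{T}_{1}$ preserves $\mu$-integrals, $\int f\,\mathrm{d}\mu=\sum_{j\ge 0}\int_{Y_{j}}f\,\mathrm{d}\mu=\sum_{j\ge 0}\int_{Y}g_{j}\,\mathrm{d}\mu$, the series converging absolutely because $\lvert\int_{Y}g_{j}\,\mathrm{d}\mu\rvert\le\mu(Y)c_{j}$.

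The key lemma is the estimate $c_{j}\le 2\lVert f\rVert_{\infty}/(j+1)$, which comes from the explicit branch structure of the Farey map. On $[0,1/(j+1)]$ one has $T_{1}^{j}(x)=x/(1-jx)$, so $Y_{j}=[1/(j+2),1/(j+1))$ is mapped bijectively onto $Y$ with inverse branch $y\mapsto y/(1+jy)$; using $h_{1}(x)=1/x$ one computes directly that $\widehat{T}_{1}^{j}(\widetilde{f}_{j})(y)=(1+jy)^{-1}f\bigl(y/(1+jy)\bigr)$ for $y\in Y$ and $\equiv 0$ off $Y$, whence $c_{j}\le\sup_{y\in Y}(1+jy)^{-1}\lVert f\rVert_{\infty}\le 2\lVert f\rVert_{\infty}/(j+1)$. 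I would also record two operator inequalities. Since $\widehat{T}_{1}$ is positive with $\widehat{T}_{1}(\mathds{1})=\mathds{1}$ (invariance of $\mu$), each $T_{Y}^{(m)}$ is an $\mathcal{L}^{\infty}$-contraction, $\lVert T_{Y}^{(m)}g\rVert_{\infty}\le\lVert g\rVert_{\infty}$. Applying Theorem~\ref{thm:MT:2011:thm2.1} to $\mathds{1}_{Y}\in\mathcal{B}$ shows $\ln(m)\,T_{Y}^{(m)}(\mathds{1}_{Y})\to\mu(Y)\,\mathds{1}_{Y}$ in $\lVert\cdot\rVert_{\mathrm{BV}}$; a convergent sequence being bounded and $\lVert\cdot\rVert_{\mathrm{BV}}\ge\lVert\cdot\rVert_{\infty}$, there is $C>0$ with $\lVert T_{Y}^{(m)}(\mathds{1}_{Y})\rVert_{\infty}\le C/\ln m$ for $m\ge 2$, and positivity upgrades this to $\lVert T_{Y}^{(m)}g\rVert_{\infty}\le 2C\lVert g\rVert_{\infty}/\ln m$ for every $g$ supported on $Y$ and $m\ge 2$.

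Now fix an arbitrary $J\in\mathbb{N}$ and split $\ln(n)\,\mathds{1}_{Y}\widehat{T}_{1}^{n}(f)-\mathds{1}_{Y}\int f\,\mathrm{d}\mu=(A_{n})+(B_{n})-(C)$, where $(A_{n})=\sum_{j=0}^{J}\bigl(\ln(n)\,T_{Y}^{(n-j)}(g_{j})-\mathds{1}_{Y}\int_{Y}g_{j}\,\mathrm{d}\mu\bigr)$, $(B_{n})=\sum_{j=J+1}^{n}\ln(n)\,T_{Y}^{(n-j)}(g_{j})$ and $(C)=\sum_{j>J}\mathds{1}_{Y}\int_{Y}g_{j}\,\mathrm{d}\mu$. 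For $(A_{n})$, for each fixed $j\le J$ I would write $\ln(n)\,T_{Y}^{(n-j)}(g_{j})=\frac{\ln n}{\ln(n-j)}\,\ln(n-j)\,T_{Y}^{(n-j)}(g_{j})$ and invoke Theorem~\ref{thm:MT:2011:thm2.1} for $g_{j}$ together with $\frac{\ln n}{\ln(n-j)}\to 1$ to obtain convergence to $\mathds{1}_{Y}\int_{Y}g_{j}\,\mathrm{d}\mu$ in $\lVert\cdot\rVert_{\infty}$; being a finite sum, $\lVert(A_{n})\rVert_{\infty}\to 0$. For $(C)$, $\lVert(C)\rVert_{\infty}\le\mu(Y)\,\epsilon_{J}$. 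For $(B_{n})$ I would split the $j$-range at $n-\lceil\sqrt{n}\,\rceil$: on $J<j\le n-\lceil\sqrt{n}\,\rceil$ one has $m=n-j\ge\sqrt{n}$, hence $\ln m\ge\tfrac12\ln n$ and $\ln(n)\lVert T_{Y}^{(m)}g_{j}\rVert_{\infty}\le 4C\,c_{j}$, so this part has $\lVert\cdot\rVert_{\infty}\le 4C\,\epsilon_{J}$; on $n-\lceil\sqrt{n}\,\rceil<j\le n$ the $\mathcal{L}^{\infty}$-contraction bound and the key lemma give $\ln(n)\lVert T_{Y}^{(n-j)}g_{j}\rVert_{\infty}\le\ln(n)\,c_{j}\le 2\lVert f\rVert_{\infty}\ln(n)/(n-\sqrt{n})$, and there are at most $\sqrt{n}+1$ such terms, so this part is $O(\ln(n)/\sqrt{n})\to 0$. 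Combining, $\limsup_{n\to\infty}\lVert\ln(n)\,\mathds{1}_{Y}\widehat{T}_{1}^{n}(f)-\mathds{1}_{Y}\int f\,\mathrm{d}\mu\rVert_{\infty}\le(4C+\mu(Y))\,\epsilon_{J}$; letting $J\to\infty$ finishes the proof and in fact yields uniform convergence on $Y$.

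The hard part is the ``diagonal'' block of $(B_{n})$, i.e.\ the terms with $n-j$ small: there the $\ln n$ prefactor receives no $1/\ln(n-j)$ damping from Theorem~\ref{thm:MT:2011:thm2.1}, and this is precisely the mechanism behind the failure of the over-general form of the statement in \cite{MT:2011} observed in \cite{KKSS:2014}. It is tamed only by pairing the summability hypothesis $\sum_{j}\lVert g_{j}\rVert_{\infty}<+\infty$ with the a priori bound $\lVert g_{j}\rVert_{\infty}=O(1/j)$ coming from the explicit Möbius form of the inverse branches of $T_{1}$ (the key lemma), so that the diagonal block consists of $O(\sqrt{n}\,)$ terms of size $O(1/n)$ each.
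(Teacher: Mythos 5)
Your argument is correct and follows essentially the same route as the paper: the same decomposition $\mathds{1}_{Y}\widehat{T}_{1}^{n}(f)=\sum_{j=0}^{n}T_{Y}^{(n-j)}(\widehat{T}_{1}^{j}(\widetilde{f}_{j}))$, the same key estimate $\lVert \widehat{T}_{1}^{j}(\widetilde{f}_{j})\rVert_{\infty}\leq 2\lVert f\rVert_{\infty}/(j+1)$ from the explicit M\"obius branches (the paper's \eqref{eq:sup_norm_1/j}), and the same appeal to Theorem~\ref{thm:MT:2011:thm2.1} via Proposition~\ref{prop:rmk1}, with the summability hypothesis taming the diagonal block exactly as you describe. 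The only difference is organizational: the paper carries the error of Theorem~\ref{thm:MT:2011:thm2.1} as an explicit $\theta_{m}$ with $\lVert\theta_{m}\rVert_{\infty}=\mathfrak{o}(1/\ln m)$ and estimates three resulting sums, whereas you fix a cutoff $J$, treat the head termwise, and control the middle block with the uniform positivity bound $\lVert T_{Y}^{(m)}g\rVert_{\infty}\lesssim\lVert g\rVert_{\infty}/\ln m$ --- a slightly cleaner packaging of the same estimates.
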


\begin{remark}\label{rmk:thm:MT:2001:sec10}
If $f \in \mathrm{BV}(0, 1)$, then $f$ satisfies the conditions of Theorem~\ref{thm:MT:2001:sec10}.  To see this observe that, by the identity given in \eqref{eqn:FareyDual},
\begin{align*}
\widehat{T}_{1}^{n}(f \cdot \mathds{1}_{Y_{n}}) = \prod_{k = 0}^{n-1} f_{1,1} \circ f_{1, 0}^{k} \cdot f \circ f_{1,0}^{n}.
\end{align*}
Therefore, since $f$, $f_{1, 0}$ and $f_{1, 1}$ are of bounded variation and the composition and product of functions of bounded is again of bounded variation it follows that $\widehat{T}^{n}(f \cdot \mathds{1}_{Y_{k}}) \in \mathrm{BV}(Y)$.  Moreover, since a function of bounded variation has finite supremum norm, we have that
\begin{align*}
\sum_{k = 0}^{+\infty} \lVert \widehat{T}_{1}^{k}(f \cdot \mathds{1}_{Y_{k}}) \rVert_{\infty}
\leq \sum_{k = 0}^{+\infty} \frac{1}{(k + 1)!} \lVert f \rVert_{\infty} < + \infty.
\end{align*}
\end{remark}

\begin{proof}
We acknowledge that the first part of this proof is inspired by the first paragraph of the proof of \cite[Theorem 10.4]{MT:2011}.

By Theorem~\ref{thm:MT:2011:thm2.1} and Proposition~\ref{prop:rmk1}, we have, for each $n \in \mathbb{N}_{0}$, that there exist $\theta_{n} \colon [0, 1] \to \mathbb{C}$ supported on a subset of $Y$ with $\lVert \theta_{n} \rVert_{\infty} = \mathfrak{o}(1/\ln(n + 2))$ and
\begin{align*}
\mathds{1}_{Y} \cdot \widehat{T}_{1}^{n}(\mathds{1}_{Y} \cdot f) = \frac{1}{\ln(n+2)} \int f \, \mathrm{d}\mu_{1} \cdot \mathds{1}_{Y} + \theta_{n} \cdot f.
\end{align*}
For $n \in \mathbb{N}$ and $j \in \{ 0, 1, 2, \dots, n\}$, set $c_{j, n} \coloneqq \ln(n)/\ln(n-j + 2) -1$.  For all natural numbers $n > 1$, we have on $Y$
\begin{align}\label{eq:MT:2011:10.4}
\begin{aligned}
&\left\lvert \ln(n) \cdot \widehat{T}^{n}_{1}(f) - \int f \, \mathrm{d}\mu_{1} \right\rvert\\
&= \left\lvert \ln(n) \sum_{j = 0}^{n} \mathds{1}_{Y} \cdot \widehat{T}^{n - j}_{1}\left(\mathds{1}_{Y} \cdot \widehat{T}^{j}_{1}(\widetilde{f}_{j})\right)  - \int f \, \mathrm{d}\mu_{1} \right\rvert \\
&\leq \left\lvert 
\ln(n) \sum_{j = 0}^{n} \frac{1}{\ln(n -j + 2)} \int \widehat{T}^{j}_{1}(\widetilde{f}_{j}) \, \mathrm{d}\mu_{1} - \int f \, \mathrm{d} \mu_{1}
 \right\rvert + \ln(n) \sum_{j = 0}^{n} \lVert \theta_{n-j} \rVert_{\infty} \cdot \lVert \mathds{1}_{Y} \cdot \widehat{T}^{j}_{1}(\widetilde{f}_{j})\rVert_{\infty}\\
&\leq \sum_{j = 0}^{n} c_{n, j} \int \rvert \widetilde{f}_{k} \lvert \, \mathrm{d} \mu_{1} + \sum_{j = n+1}^{+\infty} \int \lvert \widetilde{f}_{j} \rvert \, \mathrm{d}\mu_{1} + \ln(n) \sum_{j = 0}^{n} \lVert \theta_{n-j} \rVert_{\infty} \cdot \lVert \mathds{1}_{Y} \cdot \widehat{T}^{j}_{1}(\widetilde{f}_{j})\rVert_{\infty}.
\end{aligned}
\end{align}
We now proceed by showing that the three terms in the final line of \eqref{eq:MT:2011:10.4} each converge to zero as $n$ tends to infinity, for all $x \in Y$.
\begin{enumerate}[label=(\alph*)]
\item Since $\mu_{1}(Y_{j}) = \ln(1 + 1/(j+1)) \sim 1/(j+1)$ and since $f \in \mathcal{L}^{\infty}([0,1])$, there exists a constant $c > 0$ such that $\lVert \widetilde{f}_{j} \rVert_{1, 1} \leq c/ (j + 1)$, for all $j \in \mathbb{N}_{0}$.  For $\epsilon > 0$ if $0 \leq j \leq n - n^{1/(1+\epsilon)} + 2$, then for all $n \in \mathbb{N}$, $\ln(n) / \ln(n - j + 2) \leq 1+\epsilon$, .  Thus, for a given $\epsilon > 0$, we have that
\begin{align*}
&\lim_{n \to +\infty} \sum_{j = 0}^{n} \frac{\ln(n)}{\ln(n-j + 2)} \int \rvert \widetilde{f}_{j} \lvert \, \mathrm{d}\mu_{1}\\
&\leq \lim_{n \to +\infty} \sum_{j = 0}^{n - \lceil n^{1/(1+\epsilon)}\rceil + 1} (1 + \epsilon) \int \rvert \widetilde{f}_{j} \lvert \, \mathrm{d}\mu_{1} +  \lim_{n \to +\infty} \sum_{j = n - \lceil n^{1/(1+\epsilon)} \rceil + 2}^{n} \frac{c \cdot \ln(n)}{j \cdot \ln(n-j + 2)}\\
&\leq (1 + \epsilon) \int \lvert f \rvert \, \mathrm{d} \mu_{1} + \lim_{n \to +\infty} \frac{c}{\ln(2)} \frac{(\lceil n^{1/(1+\epsilon)}\rceil +2 ) \cdot \ln(n)}{(n - n^{1/(1+\epsilon)} + 2)}
= (1 + \epsilon) \int \lvert f \rvert \, \mathrm{d} \mu_{1}.
\end{align*}
Moreover, since for all integers $n > 1$ and $j \in \{2, 3, \dots, n\}$, we have that $\ln(n)/\ln(n-j + 2) > 1$ and since $\lim_{n \to +\infty} \ln(n)/\ln(n - j + 2) = 1$, for $j \in \{ 0, 1 \}$, it follows that,
\begin{align*}
\lim_{n \to +\infty} \sum_{j = 0}^{n} \frac{\ln(n)}{\ln(n-j + 2)} \int \rvert \widetilde{f}_{j} \lvert \, \mathrm{d} \mu_{1}
\geq \lim_{n \to +\infty} \sum_{j = 0}^{n} \int \rvert \widetilde{f}_{j} \lvert \, \mathrm{d} \mu_{1}
= \int \lvert f \rvert \, \mathrm{d} \mu_{1}.
\end{align*}
Hence, we have that
\begin{align*}
\lim_{n \to +\infty} \sum_{j = 0}^{n} c_{n, j} \int \rvert \widetilde{f}_{j} \lvert \, \mathrm{d} \mu = 0.
\end{align*}
\item Since $f \in \mathcal{L}^{1}_{1}([0, 1])$, using the definition of $\widetilde{f}_{j}$, we obtain that the second term in the final line of \eqref{eq:MT:2011:10.4} converges to zero.
\item For $j \in \mathbb{N}_{0}$, the map $f_{1, 1} \circ f_{1, 0}^{j}$ is order reversing and, an inductive argument can be used to show that $f_{1, 1} \circ f_{1, 0}^{j}(x) = (1 + j \cdot x)/( 1+ (j+1) \cdot x)$.  Using the fact that $Y_{k} \subseteq f_{1, 0}^{k} \circ f_{1, 1}((0, 1])$, for $k \in \mathbb{N}$, and the representation of $\widehat{T}_{1}$ given in \eqref{eqn:FareyDual}, an inductive argument yields, for all $j \in \mathbb{N}_{0}$, that
\begin{align*}
\widehat{T}^{j}_{1}(\widetilde{f}_{j}) (x) =
\left(\prod_{k = 0}^{j-1} f_{1, 1} \circ f_{1, 0}^{k}(x) \right) \cdot \widetilde{f}_{j} \circ f_{1, 0}^{j}(x),
\end{align*}
and thus, that
\begin{align}\label{eq:sup_norm_1/j}
\lVert \mathds{1}_{Y} \cdot \widehat{T}^{j}(\widetilde{f}_{j}) \rVert_{\infty}
\leq \left(\prod_{k = 0}^{j-1} \frac{1 + k/2}{1 + (k+1)/2} \right) \, \lVert \widetilde{f}_{j} \rVert_{\infty}
\leq \frac{2}{j + 2} \lVert \widetilde{f}_{j} \rVert_{\infty}
\leq \frac{2}{j + 2}\lVert f \rVert_{\infty}.
\end{align}
Since $\lVert \theta_{n} \rVert_{\infty} = \mathfrak{o}(1/\ln(n + 2))$, given an $\epsilon > 0$, there exists $N_{\epsilon} \in \mathbb{N}$ such that $\lVert \theta_{m} \rVert_{\infty} \leq 2\epsilon/\ln(m)$, for all $m \geq N_{\epsilon}$.  Moreover, the value $\Theta \coloneqq \sup \{ \lVert \theta_{n} \rVert_{\infty} \colon n \in \mathbb{N}_{0} \}$ is positive and finite.  Combining these statements, we have the following inequality.
\begin{align*}
\qquad \qquad
\ln(n) \sum_{j = 0}^{n} \lVert \theta_{n-j} \rVert_{\infty} \cdot \lVert \widehat{T}^{j}_{1}(\widetilde{f}_{j})\rVert_{\infty}
\leq 2 \cdot \epsilon \sum_{j = 0}^{n-N_{\epsilon}} \frac{\ln(n)}{\ln(n - j)} \lVert \widehat{T}^{j}_{1}(\widetilde{f}_{j})\rVert_{\infty} + 2 \cdot \Theta \cdot \lVert f \rVert_{\infty} \cdot \ln(n) \!\! \sum_{j = n - N_{\epsilon}+1}^{n} 1/j.
\end{align*}
Using \eqref{eq:assumption_MT} and \eqref{eq:sup_norm_1/j} a similar argument to that given in (a) yields that
\begin{align*}
\lim_{n \to +\infty} 2 \cdot \epsilon \sum_{j = 0}^{n-N_{\epsilon}} \frac{\ln(n)}{\ln(n - j)}  \lVert \widehat{T}^{j}_{1}(\widetilde{f}_{j})\rVert_{\infty}
\leq 2 \cdot \epsilon \cdot (1 + \epsilon) \sum_{k = 0}^{+\infty} \lVert \widehat{T}^{k}_{1}(\widetilde{f}_{k}) \rVert_{\infty}.
\end{align*}
Thus, for a given $\epsilon > 0$, we have that
\begin{align*}
&\lim_{n \to +\infty} \ln(n) \sum_{j = 0}^{n} \lVert \theta_{n-j} \rVert_{\infty} \cdot \lVert \widehat{T}^{j}_{1}(\widetilde{f}_{j})\rVert_{\infty}\\
&\leq 2 \cdot \epsilon \cdot (1 + \epsilon) \sum_{j = 0}^{+\infty}  \lVert \widehat{T}^{j}_{1}(\widetilde{f}_{j})\rVert_{\infty} + \lim_{n \to +\infty} 2 \cdot \Theta \cdot \lVert f \rVert_{\infty} \cdot \ln(n) \sum_{j = n - N_{\epsilon}+1}^{n} j^{-1}\\
&\leq 2 \cdot \epsilon \cdot (1 + \epsilon) \sum_{j = 0}^{+\infty} \lVert \widehat{T}^{j}_{1}(\widetilde{f}_{j})\rVert_{\infty} + 2 \cdot \Theta \cdot \lVert f \rVert_{\infty} \lim_{n \to +\infty} \ln(n) \cdot \ln\left( \frac{n}{n - N_{\epsilon}}  \right).
\end{align*}
An application of  L'H\^opital's rule yields that
\begin{align*}
\lim_{n \to +\infty} \ln(n) \sum_{j = 0}^{n} \lVert \theta_{n-j} \rvert_{\infty} \cdot \lVert \widehat{T}^{j}_{1}(\widetilde{f}_{j})\rVert_{\infty}
\leq 2 \cdot \epsilon \cdot (1 + \epsilon) \sum_{j = 0}^{\infty} \lVert \widehat{T}^{j}_{1}(\widetilde{f}_{j})\rVert_{\infty}.
\end{align*}
Since $\epsilon$ was chosen arbitrarily, this completes the proof.
\end{enumerate}
\end{proof}

\begin{theorem}\label{prop:jk2011:th7}
If $f \in \mathcal{L}^{1}_{\mu}([0, 1])$ satisfies
\begin{align*}
\ln(n) \cdot \widehat{T}^{n}(f) \rightarrow \int f d\mu_{1}
\end{align*}
uniformly on $Y$, then the same convergence holds on any compact subsets of $(0,1]$. 
\end{theorem}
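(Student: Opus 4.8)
The plan is to propagate the convergence from $Y=[1/2,1]$ to all of $(0,1]$ using the Markov structure of $T_1$ near the indifferent fixed point $0$. Recall that $(0,1]$ is the disjoint union of $Y_0=Y$ and $Y_k=[1/(k+2),1/(k+1))$ for $k\ge1$, that $f_{1,0}^k(t)=t/(1+kt)$ (so $|(f_{1,0}^k)'(t)|=(f_{1,0}^k(t)/t)^2$), and that for $x\in Y_k$ one has $x':=T_1^k(x)\in Y$ and $x=f_{1,0}^k(x')$. Fix a compact set $K\subseteq(0,1]$; then $K\subseteq[\varepsilon,1]$ for some $\varepsilon>0$, so $K$ meets only the finitely many sets $Y_0,\dots,Y_{K_0}$ with $K_0<1/\varepsilon$.

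The key step is the pointwise identity, valid for every $x\in Y_k$, every $n\in\mathbb{N}_0$ and every function $f$ at which the relevant inverse branches may be evaluated,
\[
\widehat T_1^{n}(f)(x)=\frac{x'}{x}\left(\widehat T_1^{n+k}(f)(x')-\sum_{j=0}^{k-1}\bigl|(f_{1,1}\circ f_{1,0}^{j})'(x')\bigr|\cdot\frac{x'}{y_j}\cdot\widehat T_1^{n+k-1-j}(f)(y_j)\right),
\]
where $y_j:=f_{1,1}\circ f_{1,0}^{j}(x')\in Y$. I would prove this by first writing the iterate through inverse branches, $\widehat T_1^{m}(g)(y)=y\sum_{\sigma\in\Sigma^m}|f_{1,\sigma}'(y)|\,g(f_{1,\sigma}(y))/f_{1,\sigma}(y)$ (which follows from $\widehat T_1^m=\mathcal{P}_1^m(\,\cdot\,h_1)/h_1$, the inverse-branch form \eqref{eq:alternative_form_Pr} of $\mathcal{P}_1$, and $h_1(x)=1/x$), applying it with $m=n+k$, $g=f$, $y=x'$, and then partitioning $\Sigma^{n+k}$ according to the position of the last symbol $1$ among the final $k$ coordinates of $\sigma$. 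The words $\sigma=\omega\,0^{k}$ with $\omega\in\Sigma^n$ satisfy $f_{1,\sigma}(x')=f_{1,\omega}(x)$, and, by the chain rule and $|(f_{1,0}^k)'(x')|=(x/x')^2$, their total contribution is $\tfrac{x}{x'}\widehat T_1^{n}(f)(x)$; for each $j\in\{0,\dots,k-1\}$ the words $\sigma=\sigma'\,1\,0^{j}$ with $\sigma'\in\Sigma^{n+k-1-j}$ satisfy $f_{1,\sigma}(x')=f_{1,\sigma'}(y_j)$, and their total contribution is $\bigl|(f_{1,1}\circ f_{1,0}^{j})'(x')\bigr|\tfrac{x'}{y_j}\widehat T_1^{n+k-1-j}(f)(y_j)$.

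With this identity in hand, the limit follows easily. For $x$ ranging over $K$ the indices $k,j$ range over finite sets, the branches are smooth on $[0,1]$, and $x\ge\varepsilon$, $x'\in Y$, so the coefficients $|(f_{1,1}\circ f_{1,0}^{j})'(x')|$, $x'/y_j$ and $x'/x$ are uniformly bounded; moreover all the points $x'$ and $y_j$ lie in $Y$. Hence the hypothesis ($\ln(m)\widehat T_1^{m}(f)\to\int f\,\mathrm{d}\mu_1$ uniformly on $Y$), combined with $\ln(n+c)/\ln(n)\to1$ for each fixed $c$, yields $\ln(n)\widehat T_1^{n+k}(f)(x')\to\int f\,\mathrm{d}\mu_1$ and $\ln(n)\widehat T_1^{n+k-1-j}(f)(y_j)\to\int f\,\mathrm{d}\mu_1$, uniformly over $x\in K\cap Y_k$ and the finitely many relevant $k$. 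Multiplying the identity by $\ln(n)$ and letting $n\to\infty$ gives, uniformly on $K$,
\[
\lim_{n\to\infty}\ln(n)\widehat T_1^{n}(f)(x)=\frac{x'}{x}\left(1-\sum_{j=0}^{k-1}\bigl|(f_{1,1}\circ f_{1,0}^{j})'(x')\bigr|\frac{x'}{y_j}\right)\int f\,\mathrm{d}\mu_1 .
\]
Finally, applying the same identity with $n=0$ and with $f$ the constant function $\mathds{1}_{[0,1]}$ — for which $\widehat T_1(\mathds{1}_{[0,1]})=\mathds{1}_{[0,1]}$, as is immediate from \eqref{eqn:FareyDual} (or from the $T_1$-invariance of $\mu_1$) — shows that the prefactor above equals $1$. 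Therefore $\ln(n)\widehat T_1^{n}(f)\to\int f\,\mathrm{d}\mu_1$ uniformly on $K$, as claimed.

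I expect the main obstacle to be the bookkeeping in the second step: setting up the partition of $\Sigma^{n+k}$ by the last occurrence of the symbol $1$ in its tail block, and tracking the chain-rule factors carefully enough that the ``diverted'' words reassemble into transfer-operator iterates evaluated \emph{on $Y$}. It is precisely this reduction to $Y$ that lets one invoke the hypothesis and, crucially, avoids ever having to control $\widehat T_1^{n}(f)$ at points approaching $0$ — which the mere assumption $f\in\mathcal{L}^1_\mu([0,1])$ would not permit. Everything after the identity is a routine uniform-convergence argument.
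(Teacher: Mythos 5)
Your proof is correct and follows essentially the same route as the paper: the paper derives the one-step identity $\mathcal{P}_{1}^{n}(h_{1}g)(f_{1,0}(x)) = \bigl(\mathcal{P}_{1}^{n+1}(h_{1}g)(x) - \lvert f_{1,1}'(x)\rvert\,\mathcal{P}_{1}^{n}(h_{1}g)(f_{1,1}(x))\bigr)/\lvert f_{1,0}'(x)\rvert$ and propagates the convergence from $Y_{j}$ to $Y_{j+1}$ by induction, identifying the limiting coefficient via $\mathcal{P}_{1}(h_{1})=h_{1}$. You have merely telescoped that recursion into a single closed-form identity on each $Y_{k}$ (with the coefficient identified via the equivalent fact $\widehat{T}_{1}(\mathds{1})=\mathds{1}$), which is a presentational rather than a substantive difference.
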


\begin{proof}
For $g \in \mathcal{L}^{1}_{1}([0, 1])$, $x \in [0,1]$ and $n\in \mathbb{N}$, we have that
\[
\begin{aligned}
(\mathcal{P}_{1}^{n+1}(\varphi \cdot g))(x) = \mathcal{P}_{1} ( (\mathcal{P}^{n}_{1}(\varphi \cdot g) )(x) )
=  \lvert f'_{1, 0}(x) \rvert \cdot (\mathcal{P}^{n}_{1}(\varphi \cdot g) ) (f_{1, 0}(x)) + \lvert f'_{1, 1}(x) \rvert \cdot (\mathcal{P}_{1}^{n}( \varphi \cdot g) ) (f_{1, 1}(x)),
\end{aligned}
\]
and hence
\begin{align}\label{eq:Ext}
(\mathcal{P}_{1}^{n}(\varphi \cdot g))(f_{1, 0}(x)) = 
\frac{(\mathcal{P}_{1}^{n+1}(\varphi \cdot g))(x) - \lvert f'_{1, 1}(x) \rvert \cdot (\mathcal{P}_{1}^{n}(\varphi \cdot g)) (f_{1, 1}(x))}{\lvert f'_{1, 0}(x) \rvert}.
\end{align}
We proceed by induction as follows. The start of the induction is given by the assumption in the theorem. For the inductive step, assume that the statement holds for $\bigcup_{k = 0}^{j} Y_{k}$, for some $j \in \mathbb{N}$.  Consider an arbitrary $y \in Y_{j+1}$, and let $x$ denote the unique element in $Y_{j}$ such that $f_{1, 0}(x) = y$.  Using \eqref{eq:Ext}, the fact that $\widehat{T}_{1}(g) = \mathcal{P}_{1}(h_{1} \cdot g)/h_{1}$ and the inductive hypothesis, we obtain that
\begin{align*}
&\ln(n) \cdot \widehat{T}^{n}_{1}(g)(y)\\
&= \ln(n) \cdot \widehat{T}^{n}_{1}(g)(f_{1, 0}(x))\\
&=\frac{\ln(n) \cdot (\mathcal{P}_{1}^{n}(h_{1} \cdot g ))(f_{1, 0}(x))}{h_{1} (f_{1, 0}(x))}\\
&= \frac{\ln(n) \cdot (\mathcal{P}_{1}^{n+1}(h_{1} \cdot g))(x) - \lvert f'_{1, 1}(x)\rvert \cdot \ln(n) \cdot (\mathcal{P}_{1}^{n}(h_{1} \cdot g))(f_{1, 1}(x))}{h_{1} (f_{1, 0}(x)) \cdot \lvert{f'_{1, 0}(x)} \rvert}\\
&= \frac{1}{h_{1}(f_{1, 0}(x)) \cdot \lvert f_{1, 0}'(x) \rvert} \left( h_{1}(x) \cdot \ln(n) \cdot \widehat{T}_{1}^{n+1}(g)(x) - \lvert f'_{1, 1}(x) \rvert \cdot h_{1}(f_{1, 1}(x)) \cdot \ln(n) \cdot \widehat{T}_{1}^{n}(g)(f_{1, 1}(x))\right)\\
&\sim \frac{h_{1} (x) - h_{1} (f_{1, 1}(x)) \cdot \lvert f'_{1, 1}(x) \rvert}{h_{1} (f_{1, 0}(x)) \cdot \lvert f'_{1, 0}(x)\rvert} \int g \,\mathrm{d} \mu
= \int g \, \mathrm{d}\mu.
\end{align*}
The last equality in the above calculation is a consequence of \eqref{eq:alternative_form_Pr} and the fact that $\mathcal{P}_{1}(h_{1}) = h_{1}$.
\end{proof}

Our next result, Lemma~\ref{lem:bv_part_r=1}, is the analogous result of Lemma~\ref{lem:bv_part} for $r = 1$.  In the proof of this result the following will play an essential role.  For $n \in \mathbb{N}$ and $\beta \in (0, 1]$, We recall that $p_{n} = p_{n}(\beta)$ and $q_{n} = q_{n}(\beta)$ are as defined in \eqref{eq:definition_p_q}, and define $k(n) = k(n, \beta)$, $m(n) = m(n, \beta)$ and $r(n) = r(n, \beta)$ by
\begin{align}\label{eq:def_m_r_k}
\begin{aligned}
k(n) &\coloneqq \max \{ k \in \{ 1, 2, \dots, n\} \colon \omega_{1, k}(\beta) = 1 \},\\
m(n) &\coloneqq \mathrm{card}\{ \ell\in \{ 1, 2, \dots, n \} \colon \omega_{1, \ell}(\beta) = 1 \}
\quad \text{and} \quad\\
r(n) &\coloneqq n - k(n).
\end{aligned}
\end{align}
The following list of properties can be discerned from the given definitions and remarks.
\begin{enumerate}
\item\label{enumi:Prop_1} If $k(n) = n$, then $a_{m(n)} = n - k(n -1)$.
\item\label{enumi:Prop_1.5} If $(b_{m})_{m \in \mathbb{N}}$ is a sequence of positive real numbers, then, for $n \in \mathbb{N}$, we have that 
\begin{align*}
T_{1}([0; b_{1}, b_{2}, \dots, b_{n}]) =
\begin{cases}
[0; b_{1}-1, b_{2}, \dots, b_{n}] & \text{if} \; b_{1} > 1,\\
[0; b_{2}, \dots, b_{n}] & \text{otherwise}.
\end{cases}
\end{align*}
\item\label{enumi:Prop_2} The function $f_{1, \omega_{1}(\beta)\vert_{n}}$ is a M\"obius transformation and for all $x \in [0, 1]$, \!$\displaystyle{\lim_{n \to +\infty} f_{1, \omega_{1}(\beta)\vert_{n}}(x) = \beta}$.
\item\label{enumi:Prop_4} For $n \in \mathbb{N}$, we have that
\begin{align*}
f_{1, \omega_{1}(\beta)\vert_{n}}(0) = \frac{p_{m(n)}}{q_{m(n)}} = [0; a_{1}, a_{2}, \dots, a_{m(n)}]
\end{align*}
and
\begin{align*}
f_{1, \omega_{1}(\beta)\vert_{n}}(1) = \frac{(r(n)+1)\cdot p_{m(n)} + p_{m(n) - 1}}{(r(n)+1) \cdot q_{m(n)} + q_{m(n) - 1}} = [0; a_{1}, a_{2}, \dots, a_{m(n)}, r(n) + 1].
\end{align*}
\end{enumerate}

\begin{lemma}\label{lem:fomegan}
For $n \in \mathbb{N}$ and $\beta \in (0, 1]$, we have that
\begin{align}\label{eq:fomegann}
f_{1, \omega_{1}(\beta)\vert_{n}}(x) = \frac{(r(n) \cdot p_{m(n)} + p_{m(n)-1}) \cdot x + p_{m(n)}}{(r(n) \cdot q_{m(n)} + q_{m(n) - 1}) \cdot x + q_{m(n)}},
\end{align}
where $p_{n} = p_{n}(\beta)$ and $q_{n} = q_{n}(\beta)$ are as defined in \eqref{eq:definition_p_q}.
\end{lemma}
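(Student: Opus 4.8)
The plan is to prove \eqref{eq:fomegann} by splitting the Möbius transformation $f_{1, \omega_{1}(\beta)\vert_{n}}$ into elementary pieces and reading off the answer from the continued fraction recursion \eqref{eq:definition_p_q}. First I would specialise the inverse branches to $r = 1$, giving $f_{1, 0}(x) = x/(1+x)$ and $f_{1, 1}(x) = 1/(1+x)$, and identify each map $x \mapsto (ax+b)/(cx+d)$ with the matrix $\left(\begin{smallmatrix} a & b \\ c & d \end{smallmatrix}\right)$, so that composition of maps corresponds to multiplication of matrices. Under this identification $f_{1, 0}$ becomes $A \coloneqq \left(\begin{smallmatrix} 1 & 0 \\ 1 & 1 \end{smallmatrix}\right)$, hence $f_{1, 0}^{k}$ becomes $A^{k} = \left(\begin{smallmatrix} 1 & 0 \\ k & 1 \end{smallmatrix}\right)$, and $f_{1, 1}$ becomes $C \coloneqq \left(\begin{smallmatrix} 0 & 1 \\ 1 & 1 \end{smallmatrix}\right)$.

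Next I would make the coding word explicit. Writing $\beta = [0; a_{1}, a_{2}, \dots]$ and using Property~\ref{enumi:Prop_1.5} repeatedly, together with the fact that $[0; b_{1}, b_{2}, \dots] \leq 1/2$ exactly when $b_{1} \geq 2$, one sees that $\omega_{1}(\beta) = 0^{a_{1}-1}\,1\,0^{a_{2}-1}\,1\,0^{a_{3}-1}\,1\cdots$; thus the $\ell$-th occurrence of the symbol $1$ lies at position $a_{1} + \dots + a_{\ell}$, so with $m = m(n)$ as in \eqref{eq:def_m_r_k} we get $k(n) = a_{1} + \dots + a_{m}$ and $r(n) = n - k(n) \in \{0, \dots, a_{m+1}-1\}$. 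Grouping $\omega_{1}(\beta)\vert_{n}$ into its first $m$ complete blocks followed by the $r(n)$ trailing zeros yields
\begin{align*}
f_{1, \omega_{1}(\beta)\vert_{n}} = \bigl(f_{1, 0}^{a_{1}-1} \circ f_{1, 1}\bigr) \circ \cdots \circ \bigl(f_{1, 0}^{a_{m}-1} \circ f_{1, 1}\bigr) \circ f_{1, 0}^{r(n)},
\end{align*}
which in matrix form, using $A^{a-1}C = \left(\begin{smallmatrix} 0 & 1 \\ 1 & a \end{smallmatrix}\right)$, reads $\left(\begin{smallmatrix} 0 & 1 \\ 1 & a_{1} \end{smallmatrix}\right) \cdots \left(\begin{smallmatrix} 0 & 1 \\ 1 & a_{m} \end{smallmatrix}\right) A^{r(n)}$. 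A one-line induction on $m$ using \eqref{eq:definition_p_q} shows $\left(\begin{smallmatrix} 0 & 1 \\ 1 & a_{1} \end{smallmatrix}\right) \cdots \left(\begin{smallmatrix} 0 & 1 \\ 1 & a_{m} \end{smallmatrix}\right) = \left(\begin{smallmatrix} p_{m-1} & p_{m} \\ q_{m-1} & q_{m} \end{smallmatrix}\right)$, and multiplying on the right by $A^{r(n)} = \left(\begin{smallmatrix} 1 & 0 \\ r(n) & 1 \end{smallmatrix}\right)$ gives $\left(\begin{smallmatrix} r(n)p_{m} + p_{m-1} & p_{m} \\ r(n)q_{m} + q_{m-1} & q_{m} \end{smallmatrix}\right)$, which is precisely the matrix of the right-hand side of \eqref{eq:fomegann}. (Alternatively one can induct on $n$ directly, using $f_{1, \omega_{1}(\beta)\vert_{n+1}} = f_{1, \omega_{1}(\beta)\vert_{n}} \circ f_{1, \omega_{1, n+1}(\beta)}$ and Property~\ref{enumi:Prop_1} to update $m(n)$, $k(n)$ and $r(n)$.)

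Most of this is routine $2 \times 2$ arithmetic, so I do not anticipate a genuine difficulty; the one point requiring care is the passage to the block decomposition, namely the verification that $\omega_{1}(\beta)$ really has the form $0^{a_{1}-1}1\,0^{a_{2}-1}1\cdots$ and hence that $k(n) = a_{1} + \dots + a_{m(n)}$ — this is exactly where Property~\ref{enumi:Prop_1.5} (equivalently Property~\ref{enumi:Prop_1}) is used. I would also treat the degenerate situations separately: when $m(n) = 0$, which happens precisely if $\beta \leq 1/2$ and $n < a_{1}$, the conventions $p_{-1} = 1$, $q_{-1} = 0$, $p_{0} = 0$, $q_{0} = 1$ reduce the formula to the matrix $A^{n}$ of $f_{1, 0}^{n}$; and when $\beta = [0; a_{1}, \dots, a_{N}]$ is rational, after the $N$-th block the orbit sits at $0$, so $m(n) = N$ for all large $n$, in agreement with the convention $a_{m} = 0$ for $m > N$.
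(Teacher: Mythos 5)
Your proof is correct, but it follows a genuinely different route from the paper's. The paper fixes the case $\omega_{1,n}(\beta)=1$ (so $r(n)=0$) and argues that a M\"obius transformation is determined by its values at three points: it matches the two maps at $0$ and $1$ using Property~(\ref{enumi:Prop_4}), and matches the third point by invoking the Banach fixed point theorem to identify the unique fixed point of $f_{1,\omega_{1}(\beta)\vert_{n}}$ with the periodic point $[0;\overline{a_{1},\dots,a_{m(n)}}]$, which is also a fixed point of the right-hand side by an external exercise in Dajani--Kraaikamp; the general case then follows by post-composing with $f_{1,0}^{r(n)}$. You instead pass to the $2\times 2$ matrix representation, verify that the coding word has the block form $0^{a_{1}-1}1\,0^{a_{2}-1}1\cdots$, and compute the product $\bigl(\begin{smallmatrix} 0 & 1 \\ 1 & a_{1}\end{smallmatrix}\bigr)\cdots\bigl(\begin{smallmatrix} 0 & 1 \\ 1 & a_{m}\end{smallmatrix}\bigr)A^{r(n)}$ by induction directly from the recursion \eqref{eq:definition_p_q}. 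Your computation checks out (in particular $A^{a-1}C=\bigl(\begin{smallmatrix} 0 & 1 \\ 1 & a\end{smallmatrix}\bigr)$ and the inductive step reproduce $p_{m+1}=a_{m+1}p_{m}+p_{m-1}$, $q_{m+1}=a_{m+1}q_{m}+q_{m-1}$), and your treatment of the degenerate cases $m(n)=0$ and $\beta$ rational is consistent with the conventions $p_{-1}=1$, $q_{-1}=0$, $p_{0}=0$, $q_{0}=1$. The trade-off: your argument is self-contained and simultaneously re-derives Property~(\ref{enumi:Prop_4}) and the identity $k(n)=a_{1}+\dots+a_{m(n)}$, at the cost of having to justify the block structure of $\omega_{1}(\beta)$ explicitly; the paper's argument is shorter because it takes Property~(\ref{enumi:Prop_4}) as given, but leans on the fixed-point identification and an outside reference.
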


\begin{proof}
The function $f_{1, \omega_{1}(\beta)\vert_{n}}$ is a M\"obius transformation and moreover, a M\"obius transformation is uniquely determined by its values at three distinct points.  Let us consider the case when $\omega_{1, n}(\beta) = 1$.  By definition we have that $r(n) = 0$ and so the function on the RHS of \eqref{eq:fomegann} becomes
\begin{align}\label{eq:fomegann1}
x \mapsto \frac{p_{m(n)-1} \cdot x + p_{m(n)}}{q_{m(n) - 1}\cdot x + q_{m(n)}}.
\end{align}
By Property (\ref{enumi:Prop_4}) given above,
\begin{align*}
0 \mapsto \frac{p_{m(n)}}{q_{m(n)}} = f_{\omega_{1}(\beta)\vert_{n}}(0) \quad \text{and} \quad 1 \mapsto \frac{p_{m(n)-1} + p_{m(n)}}{q_{m(n) - 1} + q_{m(n)}} = f_{\omega_{1}(\beta)\vert_{n}}(1).
\end{align*}
Since $f_{1, \omega_{1}(\beta)\vert_{n}}$ is a contraction, by Banach's fixed point theorem, there exists a unique $x \in [0, 1]$ such that $f_{1, \omega_{1}(\beta)\vert_{n}}(x) = x$.  By Properties (\ref{enumi:Prop_1}) and (\ref{enumi:Prop_1.5}) given above the pre-periodic point
\begin{align*}
[0; \overline{a_{1}, \dots, a_{m(n)}}] \coloneqq [0; a_{1}, \dots, a_{m(n)}, a_{1}, \dots, a_{m(n)}, a_{1}, \dots, a_{m(n)}, \dots, a_{1}, \dots, a_{m(n)}, \dots ]
\end{align*}
is a fixed point of $f_{1, \omega_{1}(\beta)\vert_{n}}$.  Further, by \cite[Exercise 1.3.10]{DK:2002} it follows that the point $[0; \overline{a_{1}, \dots, a_{m(n)}}]$ is a fixed point of the map given in \eqref{eq:fomegann1}.  This completes the proof of the result for when $\omega_{n} = 1$.

The result for the case when $\omega\vert_{n} \neq 1$, follows from the definition of $r(n)$ and the case when $\omega_{n} = 1$, together with the observation that $f_{1, 0}^{n}(x) = x / (1 + n\cdot x)$, for $n \in \mathbb{N}$ and all $x \in [0, 1]$.
\end{proof}

\begin{lemma}\label{lem:bv_part_r=1}
For $\alpha \in (0, 1)$, $\beta \in (0, 1]$ of intermediate $\alpha$-type and $v \in \mathfrak{U}_{\beta, \alpha}$, we have that
\begin{align*}
\lim_{n \to \infty} \ln(n) \cdot \widehat{T}_{1}^{n}(v \cdot \mathds{1}_{[0, 1] \setminus [\omega_{1}(\beta)\vert_{n}]} / h_{1}) = \int v \; \mathrm{d}\lambda
\end{align*}
uniformly on compact subsets of $(0, 1)$.
\end{lemma}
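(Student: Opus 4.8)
The plan is to write $v\cdot\mathds{1}_{[0,1]\setminus[\omega_{1}(\beta)\vert_{n}]}$ as a fixed function of bounded variation plus an $n$-dependent remainder supported on the Farey cylinder of length $n$ containing $\beta$, handle the first summand with the convergence results already available for $\widehat{T}_{1}$, and control the remainder by a direct estimate in which the hypothesis that $\beta$ is of intermediate $\alpha$-type enters as exactly the summability of the series in its definition. Fix a compact set $K\subset(0,1)$; it suffices to establish the convergence uniformly on $K$. Put
\begin{align*}
g_{n}\coloneqq v\cdot\mathds{1}_{[0,1]\setminus[\omega_{1}(\beta)\vert_{n}]}/h_{1},\qquad u_{N}\coloneqq v\cdot\mathds{1}_{[0,1]\setminus[\omega_{1}(\beta)\vert_{N}]}/h_{1},\qquad w_{N,n}\coloneqq v\cdot\mathds{1}_{[\omega_{1}(\beta)\vert_{N}]\setminus[\omega_{1}(\beta)\vert_{n}]}/h_{1},
\end{align*}
so that $g_{n}=u_{N}+w_{N,n}$ and $w_{N,n}\ge0$ for $n>N$. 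Since $\mathrm{d}\mu_{1}=h_{1}\,\mathrm{d}\lambda$, since $v$ is improper Riemann integrable, and since $\lambda([\omega_{1}(\beta)\vert_{n}])\to0$ by Property~(\ref{enumi:Prop_2}), we have $\int g_{n}h_{1}\,\mathrm{d}\lambda=\int v\cdot\mathds{1}_{[0,1]\setminus[\omega_{1}(\beta)\vert_{n}]}\,\mathrm{d}\lambda\to\int v\,\mathrm{d}\lambda$, and likewise $\int u_{N}h_{1}\,\mathrm{d}\lambda\to\int v\,\mathrm{d}\lambda$ as $N\to\infty$. For $N$ large (so that $[\omega_{1}(\beta)\vert_{N}]$ does not meet the endpoint $0$, when $\beta>0$, and $\beta\notin\partial[\omega_{1}(\beta)\vert_{N}]$) the set $[0,1]\setminus[\omega_{1}(\beta)\vert_{N}]$ is a finite union of Farey cylinders whose union avoids a neighbourhood of $\beta$, so Condition~(b) in the definition of $\mathfrak{U}_{\beta,\alpha}$, together with $1/h_{1}=\mathrm{id}_{[0,1]}\in\mathrm{BV}(0,1)$ and Proposition~\ref{Prop:propertiesBV}, yields $u_{N}\in\mathrm{BV}(0,1)$.

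I would use two consequences of Theorems~\ref{thm:MT:2001:sec10} and~\ref{prop:jk2011:th7}. First, by Remark~\ref{rmk:thm:MT:2001:sec10} and Theorem~\ref{thm:MT:2001:sec10}, $\ln(n)\widehat{T}_{1}^{n}(u_{N})\to\int u_{N}h_{1}\,\mathrm{d}\lambda$ uniformly on $Y=[1/2,1]$, and hence by Theorem~\ref{prop:jk2011:th7} uniformly on $K$. Second, the same results applied to $\mathrm{id}_{[0,1]}=1/h_{1}\in\mathrm{BV}(0,1)$ give $\ln(m)\,\mathcal{P}_{1}^{m}(\mathds{1})/h_{1}=\ln(m)\widehat{T}_{1}^{m}(1/h_{1})\to1$ uniformly on $K$; since $\widehat{T}_{1}$ is a Markov operator ($\widehat{T}_{1}(\mathds{1})=\mathds{1}$ by \eqref{eqn:FareyDual}, and $\widehat{T}_{1}$ is positive) we also have $\widehat{T}_{1}^{m}(1/h_{1})\le\lVert 1/h_{1}\rVert_{\infty}=1$ for all $m$, so there is $C_{K}>0$ with $\mathcal{P}_{1}^{m}(\mathds{1})(x)/h_{1}(x)\le C_{K}/\max\{1,\ln m\}$ for all $m\in\mathbb{N}$ and all $x\in K$.

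The remainder is where the work lies. Up to a $\lambda$-null set, $[\omega_{1}(\beta)\vert_{N}]\setminus[\omega_{1}(\beta)\vert_{n}]=\bigsqcup_{k=N+1}^{n}[\sigma_{k}]$, where $[\sigma_{k}]$ is the Farey cylinder of length $k$ that shares an endpoint with $[\omega_{1}(\beta)\vert_{k}]$ inside $[\omega_{1}(\beta)\vert_{k-1}]$. Using $v\cdot\mathds{1}_{[\sigma_{k}]}\le(\sup_{[\sigma_{k}]}v)\,\mathds{1}_{[\sigma_{k}]}$, the identity $\mathcal{P}_{1}^{k}(\mathds{1}_{[\sigma_{k}]})=\lvert f_{1,\sigma_{k}}'\rvert$, the semigroup property, and the second ingredient above, for $x\in K$
\begin{align*}
\ln(n)\,\widehat{T}_{1}^{n}(w_{N,n})(x)=\ln(n)\,\frac{\mathcal{P}_{1}^{n}\bigl(v\cdot\mathds{1}_{[\omega_{1}(\beta)\vert_{N}]\setminus[\omega_{1}(\beta)\vert_{n}]}\bigr)(x)}{h_{1}(x)}\le C_{K}\sum_{k=N+1}^{n}\Bigl(\sup_{[\sigma_{k}]}v\Bigr)\Bigl(\sup_{[0,1]}\lvert f_{1,\sigma_{k}}'\rvert\Bigr)\frac{\ln n}{\max\{1,\ln(n-k)\}}.
\end{align*}
Now Lemma~\ref{lem:fomegan}, together with the quantities $m(k)$ and $r(k)$ from \eqref{eq:def_m_r_k}, makes $f_{1,\sigma_{k}}$ an explicit M\"obius transformation and shows that $[\sigma_{k}]$ is governed by an intermediate approximant $s_{m,j}/t_{m,j}$ of $\beta$ (with $m=m(k)$ and $j$ read off from $r(k)$): one obtains $\lambda([\sigma_{k}])\asymp t_{m,j}^{-2}$, the distortion $\sup_{[0,1]}\lvert f_{1,\sigma_{k}}'\rvert/\lambda([\sigma_{k}])$ is bounded except at the one ``block-ending'' level per $m$ (where it is $\asymp a_{m}$), and $\sup_{[\sigma_{k}]}v\lesssim\operatorname{dist}(\beta,[\sigma_{k}])^{-\alpha}$ with $\operatorname{dist}(\beta,[\sigma_{k}])$ bounded below by the standard continued-fraction estimate. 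Substituting, the $k$-th summand is $\lesssim t_{m,j}^{-2(1-\alpha)+\epsilon'}$ for every $\epsilon'>0$, so $\sum_{k>N}(\sup_{[\sigma_{k}]}v)(\sup_{[0,1]}\lvert f_{1,\sigma_{k}}'\rvert)$ is bounded by a tail of $\sum_{m}\sum_{j=1}^{a_{m}}t_{m,j}^{-2(1-\alpha)+\epsilon}$, which is finite because $\beta$ is of intermediate $\alpha$-type; write $\varepsilon_{N}$ for this tail, so $\varepsilon_{N}\to0$ as $N\to\infty$. Splitting the sum over $k$ at, say, $k=\lceil n-\sqrt{n}\,\rceil$: on $k\le n-\sqrt{n}$ the factor $\ln n/\ln(n-k)$ is bounded, so that part is $\le C\varepsilon_{N}$; on the remaining $O(\sqrt{n})$ values of $k$ one invokes the $\epsilon$-slack in the definition of intermediate $\alpha$-type (which, via the estimate above with $\epsilon'<\epsilon$, forces the summand there to be small enough to absorb the bare $\ln n$), and this part tends to $0$ as $n\to\infty$. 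Hence $\limsup_{n\to\infty}\ln(n)\widehat{T}_{1}^{n}(w_{N,n})\le C\varepsilon_{N}$ uniformly on $K$.

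Combining the three ingredients: by positivity of $\widehat{T}_{1}$ and $w_{N,n}\ge0$, for $x\in K$ we have $\ln(n)\widehat{T}_{1}^{n}(u_{N})(x)\le\ln(n)\widehat{T}_{1}^{n}(g_{n})(x)\le\ln(n)\widehat{T}_{1}^{n}(u_{N})(x)+\ln(n)\widehat{T}_{1}^{n}(w_{N,n})(x)$, whence
\begin{align*}
\int u_{N}h_{1}\,\mathrm{d}\lambda\le\liminf_{n\to\infty}\ln(n)\widehat{T}_{1}^{n}(g_{n})(x)\le\limsup_{n\to\infty}\ln(n)\widehat{T}_{1}^{n}(g_{n})(x)\le\int u_{N}h_{1}\,\mathrm{d}\lambda+C\varepsilon_{N},
\end{align*}
uniformly on $K$; letting $N\to\infty$ gives $\lim_{n\to\infty}\ln(n)\widehat{T}_{1}^{n}(g_{n})=\int v\,\mathrm{d}\lambda$ uniformly on $K$, which is the assertion. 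The main obstacle is the remainder estimate of the third paragraph: it is the analogue, in the infinite-measure régime, of the elementary geometric tail bound \eqref{eq:middle_BV} in the proof of Lemma~\ref{lem:bv_part}, but it is considerably more delicate, because (i) the Farey map lacks uniform bounded distortion, so Lemma~\ref{lem:fomegan} and \eqref{eq:def_m_r_k} must be used to track the distortion of $f_{1,\sigma_{k}}$ through the continued-fraction denominators, and (ii) the logarithmic wandering rate makes $\ln n/\ln(n-k)$ degenerate for $k$ close to $n$, which is precisely where the margin $\epsilon$ in the definition of intermediate $\alpha$-type must be spent.
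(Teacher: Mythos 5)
Your proposal is correct and follows essentially the same route as the paper: the same split into a fixed bounded-variation part (handled via Proposition~\ref{prop:rmk1}, Theorems~\ref{thm:MT:2001:sec10} and~\ref{prop:jk2011:th7} and Remark~\ref{rmk:thm:MT:2001:sec10}) plus a telescoping remainder over the sibling cylinders $[\overline{\omega_{1}(\beta)}\vert_{k}]$, with the remainder controlled through Lemma~\ref{lem:fomegan}, the bound $\widehat{T}_{1}^{m}(1/h_{1})\lesssim 1/\ln(m+1)$ on $K$, the identification $t_{m(k),r(k)+1}=(r(k)+1)q_{m(k)}+q_{m(k)-1}$, and a split of the sum in $k$ (the paper uses $k=\lfloor n/2\rfloor$ rather than $n-\sqrt{n}$) in which the $\epsilon$-slack of the intermediate $\alpha$-type condition absorbs the degenerate $\ln(n)/\ln(n-k)$ factor. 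The differences are purely cosmetic.
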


\begin{proof}
Let $K$ be a compact subset of $(0, 1)$ and let $a, b \in (0, 1)$ be such that $K \subseteq [a, b]$.  Let $N \in \mathbb{N}$ be fixed.  By Proposition~\ref{prop:rmk1} and Theorems~\ref{thm:MT:2001:sec10} and~\ref{prop:jk2011:th7} together with Remark~\ref{rmk:thm:MT:2001:sec10}, since the function $v \cdot \mathds{1}_{[0, 1] \setminus [\omega_{1}(\beta)\vert_{N}]}$ is of bounded variation, it follows that
\begin{align*}
\lim_{n \to \infty} \ln(n) \cdot \widehat{T}_{1}^{n}(v \cdot \mathds{1}_{[0, 1] \setminus [\omega_{1}(\beta)\vert_{N}]} / h_{1}) = \int v \cdot \mathds{1}_{[0, 1] \setminus [\omega_{1}(\beta)\vert_{N}]}  \; \mathrm{d}\lambda
\end{align*}
Therefore, by linearity and positivity of the operator $\widehat{T}_{1}$, and since $\lim_{k \to +\infty}\lambda([\omega_{1}(\beta)\vert_{k}]) = 0$, since the observable $v$ is Lebesgue integrable and since $\beta$ is of intermediate $\alpha$-type, it suffices to show that there exists a positive constant $C$ so that 
\begin{align*}
\lim_{n \to +\infty} \ln(n) \cdot \widehat{T}_{1}^{n}(v_{\beta, \alpha} \cdot \mathds{1}_{[\omega_{1}(\beta)\vert_{N}] \setminus [\omega_{1}(\beta)\vert_{n}]} / h_{1})
\leq C \sum_{k = \widetilde{N}}^{+\infty} \sum_{j = 1}^{a_{k}} t_{k, j}^{2\cdot(\alpha - 1) + \epsilon},
\end{align*}
for some given $\epsilon \in (0, 2\cdot(\alpha - 1))$ and where
\begin{enumerate}
\item $t_{n, j}$ is as defined at the end of Section \ref{sec:notation} and
\item $\widetilde{N}$ is the unique integer so that $a_{1} + a_{2} + \dots a_{\widetilde{N}} \leq N < a_{1} + a_{2} + \dots a_{\widetilde{N} + 1}$.
\end{enumerate}
To this end, for each integer $k > 1$, let $\overline{\omega_{1}(\beta)}\vert_{k} \in \Sigma^{k}$ be the unique word of length $k$ such that $[\omega_{1}(\beta)\vert_{k-1}] = [\omega_{1}(\beta)\vert_{k}] \cup [\overline{\omega_{1}(\beta)}\vert_{k}]$.   By Lemma~\ref{lem:fomegan} we have that for all $x \in K$
\begin{enumerate}
\item $\displaystyle{\left\lvert f_{\overline{\omega_{1}(\beta)}\vert_{k}}'(x) \right\rvert \leq a^{-2} \cdot ((r(k) + 1) \cdot q_{m(k)} + q_{m(k)-1} )^{-2}}$,\\
\item if $r(k) + 1 \neq a_{m(k)}$, then 
\begin{align*}
\qquad
\left\lvert \beta - f_{\overline{\omega_{1}(\beta)}\vert_{k}}(x) \right\rvert
&\geq \left\lvert \frac{(r(k)+2) \cdot p_{m(k)} + p_{m(k)-1}}{(r(k)+2) \cdot q_{m(k)} + q_{m(k) - 1}} - \frac{(r(k)+1) \cdot p_{m(k)} + p_{m(k)-1}}{(r(k)+1) \cdot q_{m(k)} + q_{m(k) - 1}} \right\rvert\\
&\geq \frac{1}{2 \cdot ((r(k)+1) \cdot q_{m(k)} + q_{m(k)-1})^{2}},
\end{align*}
\item if $r(k) + 1 = a_{m(k)}$, letting 
\begin{align*}
\qquad
z_{k} = \begin{cases}
b & \text{if} \; m(k) \; \text{is even},\\
a & \text{if} \; m(k) \; \text{is odd},
\end{cases}
\end{align*}
then
\begin{align*}
\qquad
\left\lvert \beta - f_{\overline{\omega_{1}(\beta)}\vert_{k}}(x) \right\rvert
&\geq \left\lvert \frac{(r(k)+1) \cdot p_{m(k)} + p_{m(k)-1}}{(r(k)+1) \cdot q_{m(k)} + q_{m(k) - 1}} - \frac{(r(k) \cdot p_{m(k)} + p_{m(k)-1})\cdot x + p_{m(k)}}{(r(k) \cdot q_{m(k)} + q_{m(k) - 1}) \cdot x + q_{m(k)}} \right\rvert\\
&\geq \frac{1 - z_{k}}{((r(k)+1) \cdot q_{m(k)} + q_{m(k) - 1})^{2}}.
\end{align*}
\end{enumerate}
Since $1/h_{1}$ is of bounded variation, we have by Proposition~\ref{prop:rmk1} and Theorems~\ref{thm:MT:2001:sec10} and~\ref{prop:jk2011:th7} together with Remark~\ref{rmk:thm:MT:2001:sec10}, that there exists a positive constant $C'$, so that for all $k \in \mathbb{N}$ and $x \in K$
\begin{align*}
\widehat{T}_{1}^{k}(1/h_{1})(x) \leq \frac{C'}{\ln(k+1)}.
\end{align*}
Noting that $t_{m(k),r(k)+1} = (r(k) +1)\cdot q_{m(k)} + q_{m(k)-1}$ and, letting $\epsilon$ be such that 
\begin{align*}
\sum_{n = 1}^{+\infty} \sum_{k = 1}^{a_{n}} t_{n, j}^{-2\cdot(1-\alpha) + \epsilon} < + \infty,
\end{align*}
we have that
\begin{align*}
&\lim_{n \to +\infty} \ln(n) \cdot \widehat{T}^{n}_{1}(v_{\beta, \alpha} \cdot \mathds{1}_{[\omega_{1}(\beta)\vert_{N}] \setminus [\omega_{1}(\beta)\vert_{n}]})\\
&= \lim_{n \to +\infty} \ln(n) \sum_{k = N+1}^{n-1} \widehat{T}_{1}^{n-k}\left(\widehat{T}_{1}^{k}\left(v_{\beta, \alpha} \cdot \mathds{1}_{[\overline{\omega_{1}(\beta)}\vert_{k}]} / h_{1} \right)\right)\\
&= \lim_{n \to +\infty} \ln(n) \sum_{k = N+1}^{n-1} \widehat{T}_{1}^{n-k}\left(\left\lvert f_{\overline{\omega_{1}(\beta)}\vert_{k}}' \right\rvert \frac{1}{\lvert \beta - f_{\overline{\omega_{1}(\beta)}\vert_{k}} \rvert^{\alpha}} \frac{1}{h_{1}} \right)\\
&\leq \lim_{n \to +\infty} \frac{C'}{2 \cdot a^{2} \cdot (1 - z_{k})} \sum_{k = N + 1}^{n - 1} \frac{\ln(n)}{\ln(n-k+1)} \frac{1}{((r(k) +1)\cdot q_{m(k)} + q_{m(k)-1})^{2\cdot(1-\alpha)}}\\
&\leq \lim_{n \to +\infty} \frac{C'}{2 \cdot a^{2} \cdot (1 - z_{k})} 
\sum_{k = N + 1}^{\lfloor n/2 \rfloor} \frac{\ln(n)}{\ln(n/2)} \frac{1}{((r(k) +1)\cdot q_{m(k)} + q_{m(k)-1})^{2\cdot(1-\alpha) - \epsilon}}\\
& \phantom{\leq} +
\lim_{n \to +\infty} \frac{C'}{2 \cdot a^{2} \cdot (1 - z_{k})} \sum_{k = \lfloor n /2 \rfloor + 1}^{n - 1} \frac{2 \cdot \ln(n)}{n^{\epsilon}} \frac{1}{((r(k) +1)\cdot q_{m(k)} + q_{m(k)-1})^{2\cdot(1-\alpha) - \epsilon}}\\
&\leq \frac{C'}{a^{2} \cdot (1 - z_{k})} \sum_{k = N + 1}^{+\infty} \frac{1}{((r(k) +1)\cdot q_{m(k)} + q_{m(k)-1})^{2\cdot(1-\alpha) - \epsilon}}
\leq \frac{C'}{a^{2} \cdot (1 - z_{k})} \sum_{k = \widetilde{N}}^{+\infty} \sum_{j = 1}^{a_{k}} t_{k, j}^{2\cdot(\alpha - 1) + \epsilon}.
\end{align*}
This completes the proof.
\end{proof}

\subsubsection{Convergence of the $1$-tail}
\mbox{ }

The aim of this section is to provide an analogous result (Lemma~\ref{lem:tail_r_=_1}) for $r = 1$ of Lemma~\ref{lem:tail_r_neq_1}.  The idea behind the proofs of Lemmata~\ref{lem:tail_r_neq_1} and \ref{lem:tail_r_=_1} are similar, however, in the case that $r = 1$, several technical difficulties arise and thus need to be taken care off.

\begin{lemma}\label{lem:tail_r_=_1}
For $\alpha \in (0, 1)$, $\beta \in [0, 1]$ irrational, $n \in \mathbb{N}$ and $\eta > 0$, we have that
\begin{align*}
\mathrm{dim}_{\mathcal{H}}\left(\limsup_{n \to +\infty} A_{n, 1, \eta}\right) = 0.
\end{align*}
\end{lemma}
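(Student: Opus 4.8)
The plan is to follow the scheme of Lemma~\ref{lem:tail_r_neq_1}, localising $A_{n, 1, \eta}$ near the orbit point $T_{1}^{n}(\beta)$ and then bounding the Hausdorff measure of the resulting limit superior by a covering argument; the extra work stems from the lack of uniform expansion of $T_{1}$ near its indifferent fixed point $0$. First I would make everything explicit. By Lemma~\ref{lem:fomegan} the map $f_{1, \omega_{1}(\beta)\vert_{n}}$ is a M\"obius transformation with $p_{m(n)-1} q_{m(n)} - p_{m(n)} q_{m(n)-1} = 1$, so, writing $D_{n}(x) \coloneqq (r(n) q_{m(n)} + q_{m(n)-1}) x + q_{m(n)}$, one has $\lvert f_{1, \omega_{1}(\beta)\vert_{n}}'(x) \rvert = D_{n}(x)^{-2}$, and, since $f_{1, \omega_{1}(\beta)\vert_{n}}(T_{1}^{n}(\beta)) = \beta$ by Property~(\ref{enumi:Prop_2}), $\lvert \beta - f_{1, \omega_{1}(\beta)\vert_{n}}(x) \rvert = \lvert x - T_{1}^{n}(\beta) \rvert / (D_{n}(x) D_{n}(T_{1}^{n}(\beta)))$. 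Hence, by \eqref{eq:vnr}, $v_{n, 1}(x) = D_{n}(x)^{\alpha - 2} D_{n}(T_{1}^{n}(\beta))^{\alpha} \lvert x - T_{1}^{n}(\beta) \rvert^{-\alpha}$.

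Next I would determine the shape of $A_{n, 1, \eta}$. On $[T_{1}^{n}(\beta), 1]$ the function $v_{n, 1}$ is a product of two positive decreasing functions, and on $[0, T_{1}^{n}(\beta)]$ a short computation shows that $x \mapsto \ln v_{n, 1}(x)$ has increasing derivative, hence at most one critical point, occurring at the point where $D_{n}$ equals $\tfrac{2-\alpha}{2} D_{n}(T_{1}^{n}(\beta))$. Consequently $A_{n, 1, \eta}$ is the union of at most two intervals: a component $J_{n}^{(0)}$ abutting $0$ (possibly empty) and a component $J_{n}^{(z)}$ containing $T_{1}^{n}(\beta)$. Since $\int v_{n, 1} \, \mathrm{d}\lambda = \int_{[\omega_{1}(\beta)\vert_{n}]_{1}} v_{\beta, \alpha} \, \mathrm{d}\lambda \leq 2 (1-\alpha)^{-1} \lambda([\omega_{1}(\beta)\vert_{n}]_{1})^{1-\alpha}$ and $\lambda([\omega_{1}(\beta)\vert_{n}]_{1}) = (q_{m(n)} \cdot t_{m(n)+1, r(n)+1})^{-1}$, Chebyshev's inequality gives $\lambda(A_{n, 1, \eta}) \leq 2 \eta^{-1} (1-\alpha)^{-1} \ln(n) (q_{m(n)} t_{m(n)+1, r(n)+1})^{-(1-\alpha)}$; combining $q_{m} \geq q_{m-1} + q_{m-2}$ (so $q_{m}$ grows at least geometrically in $m$), $r(n)+1 \leq a_{m(n)+1}$, and the boundedness of $u \mapsto \ln(u)/u^{1-\alpha}$ on $[1, \infty)$, this tends to $0$ as $n \to \infty$. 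In particular $\limsup_{n} J_{n}^{(0)} \subseteq \{0\}$, which has Hausdorff dimension zero, and $\limsup_{n} A_{n, 1, \eta} \setminus \{0\} \subseteq \Omega_{1}(\beta)$, so it remains to handle $\limsup_{n} J_{n}^{(z)}$.

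For this I would sharpen the localisation of $J_{n}^{(z)}$. Using that on $J_{n}^{(z)}$ one has $D_{n} \geq \tfrac{2-\alpha}{2} D_{n}(T_{1}^{n}(\beta))$, and that for $x > T_{1}^{n}(\beta)$ also $D_{n}(x) \geq (r(n) q_{m(n)} + q_{m(n)-1}) \lvert x - T_{1}^{n}(\beta) \rvert$, feeding these bounds into $\ln(n) v_{n, 1}(x) > \eta$ yields $J_{n}^{(z)} \subseteq B(T_{1}^{n}(\beta), \rho_{n})$ for a radius $\rho_{n}$ carrying a genuine damping factor of order $\lvert f_{1, \omega_{1}(\beta)\vert_{n}}'(T_{1}^{n}(\beta)) \rvert^{(1-\alpha)/\alpha}$ (note $\alpha < 1$), which is small precisely when $T_{1}^{n}(\beta)$ is bounded away from $0$. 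To bound $\mathcal{H}^{s}(\limsup_{n} B(T_{1}^{n}(\beta), \rho_{n}))$ for an arbitrary $s > 0$, I would cover level by level in the continued fraction expansion of $\beta$: for fixed $m$ the orbit points with $m(n) = m$ run through $[0; k, a_{m+2}, a_{m+3}, \dots]$, $k = 1, \dots, a_{m+1}$, with $D_{n}(T_{1}^{n}(\beta))$ comparable to $q_{m} a_{m+1}/k$; the balls for large $k$ overlap and merge into one interval of length $\mathfrak{o}(1)$ as $m \to \infty$, while the separated balls (small $k$) contribute, for each $s$, a convergent series in $k$ times a factor $q_{m}^{-c(s)}$ with $c(s) > 0$, so that summing over $m$ (using the geometric growth of $q_{m}$) shows $\sum_{m} (s\text{-cost of level } m) < \infty$. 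Letting the starting level tend to infinity gives $\mathcal{H}^{s}(\limsup_{n} A_{n, 1, \eta}) = 0$ for every $s > 0$, and hence $\mathrm{dim}_{\mathcal{H}}(\limsup_{n \to +\infty} A_{n, 1, \eta}) = 0$.

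The main obstacle is this final covering step. In Lemma~\ref{lem:tail_r_neq_1} the covering radii decay geometrically in $n$, so the covering series converges trivially; here the indifferent fixed point forces the orbit of $\beta$ to spend arbitrarily long stretches processing a single, possibly enormous, continued fraction digit $a_{m(n)+1}$, and along such a stretch the crude estimate for the radius is hopelessly large. The resolution is that on exactly those stretches $\lvert f_{1, \omega_{1}(\beta)\vert_{n}}' \rvert$ near $T_{1}^{n}(\beta)$ is correspondingly small, which both shrinks the intervals $J_{n}^{(z)}$ and causes a great many of them to overlap; quantifying this damping and the attendant merging of balls, organised according to which continued fraction digit of $\beta$ is being processed, is what makes the $s$-cost summable over levels for every $s > 0$.
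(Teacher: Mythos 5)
Your reduction to explicit formulas is sound: the identity $v_{n,1}(x) = D_n(x)^{\alpha-2}\,D_n(T_1^n(\beta))^{\alpha}\,\lvert x - T_1^n(\beta)\rvert^{-\alpha}$, the shape analysis of $A_{n,1,\eta}$, and the radius bound $\rho_n \asymp (\ln(n)/\eta)^{1/\alpha}\,D_n(T_1^n(\beta))^{-2(1-\alpha)/\alpha}$ for the component around $T_1^n(\beta)$ all check out, and they match the estimates the paper obtains from the mean value theorem and Lemma~\ref{lem:fomegan}. The genuine gap is in the final covering step. At level $m$ the relevant balls are $B(1/k,\rho_k)$ with $\rho_k \asymp (\ln n)^{1/\alpha}\bigl(k/(q_m a_{m+1})\bigr)^{\gamma}$, where $\gamma = 2(1-\alpha)/\alpha$ and $k = 1,\dots,a_{m+1}$; the radii \emph{increase} with $k$, so $\sum_k \rho_k^s$ is not ``a convergent series in $k$''. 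The separated balls are those with $k \lesssim k_0 := (q_m a_{m+1})^{\gamma/(\gamma+2)}$ (up to logarithms), and their total $s$-cost is of order $k_0^{1+s\gamma}(q_m a_{m+1})^{-s\gamma} = (q_m a_{m+1})^{\gamma(1-2s)/(\gamma+2)}$, which \emph{blows up} with $m$ whenever $s<1/2$. So the one-ball-per-$(m,k)$ cover cannot yield $\mathcal{H}^s<\infty$ for all $s>0$, and the merging you invoke only helps for $k>k_0$.

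The paper sidesteps this by localising \emph{before} covering: it fixes $k$ and $\epsilon$ and estimates $\mathrm{dim}_{\mathcal{H}}\bigl(\limsup_n A_{n,1,\eta}\cap(1/(k+1)+\epsilon,\,1/k-\epsilon)\bigr)$. On such a window, for all large $n$ with $T_1^n(\beta)\notin(1/(k+1),1/k)$ the set $A_{n,1,\eta}$ misses the window entirely (this is where the $\epsilon$-separation from the endpoints is used), and among the $n$ with $T_1^n(\beta)\in(1/(k+1),1/k)$ there is at most \emph{one per continued-fraction level} $m$. The cover therefore consists of a single ball per $m$, of radius $\lesssim \ln(q_{m+1})^{1/\alpha}q_{m+1}^{-2(1/\alpha-1)}$, and the at-least-exponential growth of $(q_m)$ makes $\sum_m(\cdot)^s$ finite for every $s>0$. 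Countable stability of Hausdorff dimension over $k$ and a sequence $\epsilon\downarrow 0$ (the leftover set $\{0\}\cup\{1/k\colon k\in\mathbb{N}\}$ being countable) then finishes the proof. If you insist on your global organisation you would at least have to cover the balls with $k_1\le k< k_0$ by a single interval of length $\asymp 1/k_1$ and optimise $k_1$, but the clean fix is the paper's localisation, which makes the within-level sum over $k$ disappear altogether.
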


\begin{proof}
It is sufficient to prove, for all $k \in \mathbb{N}$, $\eta > 0$ and $\epsilon \in (0, (2k(k+1))^{-1})$, that
\begin{align*}
\mathrm{dim}_{\mathcal{H}}\left(\limsup_{n \to +\infty} A_{n, 1, \eta} \cap ( 1/(k+1) + \epsilon, 1/k - \epsilon ) \right) = 0.
\end{align*}
To this end, for $n \in \mathbb{N}$, set $z = z(n) \coloneqq T_{1}^{n}(\beta)$ and observe that $z$ is the unique real number in $[0, 1]$ such that $f_{1, \omega_{1}(\beta)\vert_{n}}(z) = \beta$.  If $z \in (1/(k+1), 1/k)$, then, for all $x \in (1/(k+1)+\epsilon, 1/k - \epsilon)$, by the mean value theorem and Lemma~\ref{lem:fomegan}, there exists $u \in (1/(k+1), 1/k)$ such that
\begin{align*}
\lvert \beta - f_{1, \omega_{1}(\beta)\vert_{n}}(x)\rvert
= \lvert f_{1, \omega_{1}(\beta)\vert_{n}}(z) - f_{1, \omega_{1}(\beta)\vert_{n}}(x)\rvert
&= \lvert x - z \rvert \cdot \lvert f_{1, \omega_{1}(\beta)\vert_{n}}'(u)\rvert\\
&= \lvert x - T_{1}^{n}(\beta) \rvert \cdot \lvert (r(n) u + 1 ) q_{m(n)} +  q_{m(n)-1} u \rvert^{-2}\\
&\geq k^{2} \cdot \lvert x - T_{1}^{n}(\beta) \rvert \cdot \lvert (r(n) + k ) q_{m(n)} +  q_{m(n)-1} \rvert^{-2}.
\end{align*}
If $z \not\in (1/(k+1), 1/k)$, then, for all $x \in (1/(k+1)+\epsilon, 1/k - \epsilon)$, since $f_{1, \omega_{1}(\beta)\vert_{n}}$ is order preserving or order reversing, we have that 
\begin{align*}
\lvert \beta - f_{1, \omega_{1}(\beta)\vert_{n}}(x)\rvert
&= \lvert f_{1, \omega_{1}(\beta)\vert_{n}}(z) - f_{1, \omega_{1}(\beta)\vert_{n}}(x)\rvert\\
&\geq \min\{ \lvert f_{1, \omega_{1}(\beta)\vert_{n}}(1/k) - f_{1, \omega_{1}(\beta)\vert_{n}}(x)\rvert, \lvert f_{1, \omega_{1}(\beta)\vert_{n}}(1/(k+1)) - f_{1, \omega_{1}(\beta)\vert_{n}}(x)\rvert \}
\end{align*}
and so by the mean value theorem and Lemma~\ref{lem:fomegan}, there exists $u \in (1/(k+1), 1/k)$ such that
\begin{align*}
\lvert \beta - f_{1, \omega_{1}(\beta)\vert_{n}}(x)\rvert
\geq \epsilon \cdot \lvert f_{1, \omega_{1}(\beta)\vert_{n}}'(u)\rvert
&= \epsilon \cdot \rvert (r(n) \cdot u + 1 ) \cdot q_{m(n)} +  q_{m(n)-1} \cdot u \rvert^{-2}\\
&\geq \epsilon \cdot k^{2} \cdot \lvert (r(n) + k ) \cdot q_{m(n)} +  q_{m(n)-1} \rvert^{-2}\!.
\end{align*}
Hence, for $x \in (1/(k+1)+\epsilon, 1/k - \epsilon)$, we have that
\begin{align*}
\ln(n) \cdot v_{n, 1}(x)
&= \frac{\ln(n)}{( (r(n) \cdot x + 1 ) \cdot q_{m(n)} +  q_{m(n)-1} \cdot x )^{2}} \, \frac{1}{\lvert \beta - f_{1, \omega_{1}(\beta)\vert_{n}}(x)\rvert^{\alpha}}\\
&\leq 
\begin{cases}
\displaystyle{\frac{(k + 1)^{2} \cdot \ln(n)}{\lvert T_{1}^{n}(\beta) - x \rvert^{\alpha} \cdot k^{2\cdot\alpha} \cdot( (r(n) + k )\cdot q_{m(n)} + q_{m(n)-1} )^{2\cdot(1-\alpha)}}} & \text{if} \; T_{1}^{n}(\beta) \in (1/(k+1), 1/k),\\
\displaystyle{\frac{(k + 1)^{2} \cdot \ln(n)}{\epsilon^{\alpha}\cdot k^{2\cdot\alpha}\cdot( (r(n) + k ) \cdot q_{m(n)} + q_{m(n)-1} )^{2\cdot(1-\alpha)}}} & \text{if} \; T_{1}^{n}(\beta) \not\in (1/(k+1), 1/k).
\end{cases}
\end{align*}
Since, 
\begin{align*}
&\lim_{n \to +\infty}\frac{(k+1)^{2} \cdot \ln(n)}{\epsilon^{\alpha} \cdot k^{2\cdot\alpha} \cdot( (r(n) + k ) \cdot q_{m(n)} + q_{m(n)-1} )^{2\cdot(1-\alpha)}}\\
&\leq \lim_{n \to +\infty} \frac{(k+1)^{2} \cdot \ln((r(n) + k ) \cdot q_{m(n)} + q_{m(n)-1})}{\epsilon^{\alpha} \cdot k^{2\cdot\alpha}\cdot( (r(n) + k ) \cdot q_{m(n)} + q_{m(n)-1} )^{2\cdot(1-\alpha)}} = 0,
\end{align*}
there exists $M \in \mathbb{N}$ such that, for all $x \in (1/(k+1)+\epsilon, 1/k - \epsilon)$ and $n \geq M$, if $T_{1}^{n}(\beta) \not\in (1/(k+1), 1/k)$, then $\ln(n) \cdot v_{n, 1}(x) < \eta$.  Therefore, for all $n \geq M$, if $T_{1}^{n}(\beta) \not\in (1/(k+1), 1/k)$, then
\begin{align*}
A_{n, 1, \eta} \cap (1/(k+1) + \epsilon, 1/k - \epsilon)
= \emptyset;
\end{align*}
otherwise, if $T_{1}^{n}(\beta) \in (1/(k+1), 1/k)$, then
\begin{align*}
&A_{n, 1, \eta} \cap (1/(k+1) + \epsilon, 1/k - \epsilon)\\
&= \left\{ x \in (1/(k+1) + \epsilon, 1/k - \epsilon) \colon \ln(n) \cdot v_{n, 1}(x) \geq \eta \right\}\\
&\subseteq \left\{ x \in (1/(k+1) + \epsilon, 1/k - \epsilon) \colon \frac{(k+1)^{2} \cdot \ln(n)}{\lvert T_{1}^{n}(\beta) - x \rvert^{\alpha} \cdot k^{2\cdot\alpha} \cdot( (r(n) + k )\cdot q_{m(n)} + q_{m(n)-1} )^{2\cdot(1-\alpha)}} \geq \eta \right\}\\
&\subseteq B\left(T_{1}^{n}(\beta), \frac{(k+1)^{2/\alpha} \cdot \ln(n)^{1/\alpha}}{\eta^{1/\alpha} \cdot k^{2} \cdot( (r(n) + k )\cdot q_{m(n)} + q_{m(n)-1} )^{2\cdot(1/\alpha -1)}} \right) \cap (1/(k+1) + \epsilon, 1/k - \epsilon).
\end{align*}
Hence, given $\delta > 0$, there exists a natural number $K = K(\delta) \geq M$ such that
\begin{align*}
\left\{ 
B\left(T_{1}^{n}(\beta), \frac{(k+1)^{2/\alpha} \cdot \ln(n)^{1/\alpha}}{\eta^{1/\alpha} \cdot k^{2} \cdot( (r(n) + k ) \cdot q_{m(n)} + q_{m(n)-1} )^{2\cdot(1/\alpha - 1)}} \right) 
\colon
n \geq K \; \text{and} \; \exists \; l \in \mathbb{N} \; \text{so that} \; n = - k + \sum_{i = 1}^{l} a_{i} 
\right\}
\end{align*}
is an open $\delta$-cover of
\begin{align*}
\limsup_{n \to +\infty} A_{n, 1, \eta} \cap ( 1/(k+1) + \epsilon, 1/k - \epsilon ).
\end{align*}
Therefore, for $s > 0$ and $\delta > 0$, letting $\mathcal{H}_{\delta}^{s}$ denote the $\delta$-approximation to the $s$-dimensional Hausdorff measure, we have that 
\begin{align*}
&\mathcal{H}_{\delta}^{s}\left( \limsup_{n \to +\infty} A_{\eta, n} \cap ( 1/(k+1) + \epsilon, 1/k - \epsilon ) \right)\\
&\leq \sum_{n = M}^{+\infty} \lambda \left( 
B\left(T_{1}^{n}(\beta),
\frac{2^{2\cdot(1/\alpha - 1)} \cdot (k+1)^{2/\alpha} \cdot \ln(n)^{1/\alpha}}{\eta^{1/\alpha} \cdot k^{2} \cdot( (r(n) + k + 1 ) \cdot q_{m(n)} + q_{m(n)-1} )^{2\cdot(1/\alpha - 1)}}
\right) \cap (1/(k+1) + \epsilon, 1/k - \epsilon)\right)^{s}\\
&\leq
\frac{2^{s + 2\cdot(1/\alpha - 1)} \cdot (k + 1)^{2\cdot s/\alpha}}{\eta^{s/\alpha} \cdot k^{2\cdot s}}
\sum_{m= m(K)}^{+\infty}
\frac{\ln\left(\sum_{\ell =1}^{m+1}a_{\ell}\right)^{s/\alpha}}{ (q_{m+1}) ^{2\cdot s \cdot(1/\alpha - 1)}}\\
&\leq
\frac{2^{s + 2\cdot(1/\alpha - 1)} \cdot (k + 1)^{2\cdot s/\alpha}}{\eta^{s/\alpha} \cdot k^{2\cdot s}}
\sum_{m= m(K)}^{+\infty}
\frac{\ln (q_{m+1})^{s/\alpha}}{ (q_{m+1})^{2\cdot s \cdot(1/\alpha - 1)}}.
\end{align*}
(In the above we have used that if $y \in [1/( \ell + 2), 1/(\ell + 1)]$, for some $\ell \in \mathbb{N}$, then $T_{1}(y) \in  [1/( \ell + 1), 1/\ell]$.)  This latter infinite sum is finite for all $s > 0$ and $\delta > 0$ since, by the recursive definition of $q_{n}$, we have that $q_{n}$ grows at least at an exponential rate as $n \to +\infty$.  Thus $\mathcal{H}^{s}( \limsup_{n \to +\infty} A_{n, 1, \eta})$ is finite for all $s > 0$. This yields that  $\mathrm{dim}_{\mathcal{H}} ( \limsup_{n \to +\infty} A_{n, 1, \eta} ) = 0$ as required.  (Here $\mathcal{H}^{s}$ denotes the $s$-dimensional Hausdorff measure.)
\end{proof}

\section{Proof of main results}\label{sec:proofs}

\subsection{Proof of Theorem~\ref{thm:Convergence_Equilibrum}}

\begin{proof}[Proof of Theorem~\ref{thm:Convergence_Equilibrum}]
By linearity of the Perron-Frobenius operator we have that
\begin{align*}
\mathcal{P}_{r}^{n}(v) = \mathcal{P}_{r}^{n}(v \cdot \mathds{1}_{[0,1] \setminus [W_{r, n}(\beta)]}) +  \mathcal{P}_{r}^{n}(v \cdot \mathds{1}_{[W_{r, n}(\beta)]})
\end{align*}
where $[W_{r, n}(\beta)]$ is as defined in \eqref{eq_tail_set}.  Further, by Lemma~\ref{lem:bv_part} we have that
\begin{align*}
\lim_{n \to +\infty} \mathcal{P}_{r}^{n}(v \cdot \mathds{1}_{[0,1] \setminus [W_{r, n}(\beta)]}) = \int v \, \mathrm{d}\lambda \cdot h_{r}
\end{align*}
uniformly on $[0, 1]$.  By the facts that $v$ is non-negative and $\mathcal{P}_{r}$ is a positive operator, we have that
\begin{align*}
0
\leq \lim_{n \to \infty} \mathcal{P}_{r}^{n}(v \cdot \mathds{1}_{[W_{r, n}(\beta)]})
\leq \lim_{n \to \infty} \mathcal{P}_{r}^{n}(v_{n, r}),
\end{align*}
where $v_{n, r}$ is as defined in \eqref{eq:vnr}.  By Lemma~\ref{lem:tail_r_neq_1}, this latter limit is equal to zero outside a set of Hausdorff dimension zero.

All that remains to show is that if $\beta \in [0, 1]$ is pre-periodic with respect to $T_{r}$ and has period length strictly greater than one, then on $\Omega_{r}(\beta)$ we have that
\begin{align*}
\qquad \quad
\liminf_{n \to +\infty} \mathcal{P}_{r}^{n}(v)
= \int v \, \mathrm{d}\lambda \cdot h_{r}
\quad \text{and} \quad
\limsup_{n \to +\infty} \mathcal{P}_{r}^{n}(v);
= +\infty;
\end{align*}
and in the case that $\beta \in [0, 1]$ is pre-periodic with respect to $T_{r}$ and has period length equal to one then on the singleton $\Omega_{r}(\beta)$ we have that the limit in \eqref{eq:convergence} is equal to $+\infty$.

By linearity of $\mathcal{P}_{r}^{n}$ and Lemma~\ref{lem:bv_part}, it suffices to show, if $\beta \in [0, 1]$ is pre-periodic with respect to $T_{r}$ and has period length strictly greater than one, then on $\Omega_{r}(\beta)$
\begin{align*}
\liminf_{n \to +\infty} v_{n, r} = 0
\quad \text{and} \quad
\limsup_{n \to +\infty} v_{n, r} = +\infty;
\end{align*}
and in the case that $\beta \in [0, 1]$ is pre-periodic with respect to $T_{r}$ and has period length equal to one, then on the singleton $\Omega_{r}(\beta)$
\begin{align*}
\lim_{n \to +\infty} v_{n, r} = +\infty.
\end{align*}
Indeed if $\beta$ is pre-periodic with respect to $T_{r}$ and has period length $m \geq 1$, then letting $n \in \mathbb{N}_{0}$, be the minimal integer so that $T_{r}^{n + k}(\beta) = T_{r}^{n + k + m}(\beta)$, for all $k \in \mathbb{N}_{0}$, we have that
\begin{align*}
f_{r, (\omega_{r, n + j + 1}(\beta), \dots, \omega_{r, n + j + m}(\beta))}(T_{r}^{n + j}(\beta)) = T^{n + j}_{r}(\beta),
\end{align*}
for all $j \in \{ 0, 1, \dots, m-1 \}$.  Further, $\Omega_{r}(\beta) = \{ T^{n}_{r}(\beta), \dots, T^{n + m -1}_{r}(\beta) \}$, and hence, for $j \in \{ 0, 1, \dots, m-1 \}$, it follows that
\begin{align*}
v_{n + j + k \cdot m, r}(T^{n + j}_{r}(\beta)) = + \infty,
\end{align*}
for all $k \in \mathbb{N}_{0}$.  To complete the proof we will show, for $m > 1$ and $i, j \in \{ 0, 1, \dots, m-1 \}$ with $i \neq j$, that
\begin{align*}
\lim_{k \to +\infty} v_{n + j + k \cdot m, r}(T^{n + i}_{r}(\beta)) = 0.
\end{align*}
To this end set $L \coloneqq \min \{ \lvert T^{n + j}_{r}(\beta) - T^{n + i}_{r}(\beta) \colon i, j \in \{ 0, 1, \dots, m-1 \} \; \text{and} \; i \neq j \}$.  By \eqref{eq:derivative_bounds} and Lemmata~\ref{lem:BD} and \ref{lem:BD_neighbouring_cylinders}, there exists a positive constant $\varrho \in \mathbb{R}$ such that the following chain of inequalities hold.
\begin{align*}
&\lim_{k \to +\infty} v_{n + j + k \cdot m, r}(T^{n + i}_{r}(\beta))\\
&= \lim_{k \to +\infty} \sum_{\omega \in \mathfrak{W}_{r, n+j+k\cdot m}(\beta)} \lvert f_{r, \omega}'(T^{n + i}_{r}(\beta)) \rvert \cdot \lvert \beta - f_{r, \omega}(T^{n + i}_{r}(\beta)) \rvert^{-\alpha}\\
&\leq \lim_{k \to +\infty} 3 \cdot \varrho \cdot \lvert f_{r, \omega_{1}(\beta)\vert_{n+j+k\cdot m}}'(T^{n + i}_{r}(\beta)) \rvert \cdot \lvert \beta - f_{r, \omega_{1}(\beta)\vert_{n+j+k\cdot m}}(T^{n + i}_{r}(\beta)) \rvert^{-\alpha}\\
&\leq \lim_{k \to +\infty} 3 \cdot \varrho \cdot \lvert f_{r, \omega_{1}(\beta)\vert_{n+j+k\cdot m}}'(T^{n + i}_{r}(\beta)) \rvert \cdot \lvert f_{r, \omega_{1}(\beta)\vert_{n+j+k\cdot m}}(T^{n+j+k\cdot m}_{r}(\beta)) - f_{r, \omega_{1}(\beta)\vert_{n+j+k\cdot m}}(T^{n + i}_{r}(\beta)) \rvert^{-\alpha}\\
&\leq \lim_{k \to +\infty} 3 \cdot \varrho^{1+\alpha} \cdot \lvert f_{r, \omega_{1}(\beta)\vert_{n+j+k\cdot m}}'(T^{n + i}_{r}(\beta)) \rvert^{1-\alpha} \cdot \lvert T_{r}^{n+j+k\cdot m}(\beta) - T^{n + i}_{r}(\beta) \rvert^{-\alpha}\\
&= 3 \cdot \varrho^{1+\alpha} \cdot \lvert T_{r}^{n+j}(\beta) - T^{n + i}_{r}(\beta) \rvert^{-\alpha} \lim_{k \to \infty} (2 - r)^{(\alpha-1) \cdot (n + j + k \cdot m)}\\
&= 3 \cdot \varrho^{1+\alpha} \cdot L \lim_{k \to \infty} (2 - r)^{(\alpha-1) \cdot (n + j + k \cdot m)}
= 0.
\end{align*}
This completes the proof.
\end{proof}

\subsection{Proof of Theorems~\ref{thm:Infinite_1} and \ref{thm:Infinite_2}}

\subsubsection{Proof of Theorem~\ref{thm:Infinite_1}}
\mbox{ }

We divide the proof of Theorem~\ref{thm:Infinite_1} into two cases; the first case is when $\beta$ is a rational number and the second case is when $\beta$ is an irrational of intermediate $\alpha$-type.  We emphasise that when $\beta$ is an irrational of intermediate $\alpha$-type, then the method of proof of Theorem~\ref{thm:Infinite_1} is the same as Theorem~\ref{thm:Convergence_Equilibrum}, whereas in the case that $\beta$ is a rational, this method is no longer applicable.

\begin{proof}[Proof of Theorem~\ref{thm:Infinite_1} for $\beta$ rational]
Let $\alpha \in (0, 1)$, $\beta \in (0, 1]$ be a rational number and $v \in \mathfrak{U}_{\beta, \alpha}$.  As $\beta$ is a rational number, there exists a minimal $n \in \mathbb{N}$ such that $T^{n}(\beta) = 0$, let $n$ be fixed as such.  Further, we have that $\Omega_{1}(\beta) = \{ 0 \}$.  We will first prove the result for $\beta \neq 1$.  By definition of the Farey map, there exist exactly two finite words $\eta, \eta' \in \Sigma^{n}$ such that
\begin{enumerate}[label=(\alph*)]
\item $f_{1, \eta}(0) = \beta = f_{1, \eta'}(0)$,
\item $f_{1, \eta}(x) < \beta < f_{1, \eta'}(x)$, for all $x \in (0, 1]$, and
\item $f_{1, \xi}(x) \neq \beta$, for all words $\xi \in \Sigma^{n} \setminus \{ \eta, \eta' \} $ and all $x \in [0, 1]$.
\end{enumerate}
By definition, we have, for $k \in \mathbb{N}$, that
\begin{align*}
\mathcal{P}_{1}^{k}(v)(x) = \sum_{\xi \in \Sigma^{k}} \lvert f_{1, \xi}' \rvert \cdot v \circ f_{1, \xi}.
\end{align*}
Hence, by linearity of the operator $\mathcal{P}_{1}$, we have, for all natural numbers $k > n$, that 
\begin{align*}
\mathcal{P}^{k}_{1}(v)
= \mathcal{P}_{1}^{k-n}(\mathcal{P}_{1}^{n}(v))
&= \mathcal{P}_{1}^{k-n}(\mathcal{P}_{1}^{n}(v \cdot \mathds{1}_{[0, 1] \setminus [\eta] \cap [0, 1] \setminus [\eta']})) + \mathcal{P}_{1}^{k-n}(\mathcal{P}_{1}^{n}(v \cdot \mathds{1}_{[\eta] \cup [\eta']}))\\
&= \mathcal{P}^{k-n}_{1}\left(\textstyle{\sum_{\xi \in \Sigma^{k} \setminus \{\eta, \eta'\}} \lvert f_{1, \xi}' \rvert \cdot v \circ f_{1, \xi}}\right) + \mathcal{P}_{1}^{k-n}(\mathcal{P}_{1}^{n}(v \cdot \mathds{1}_{[\eta] \cup [\eta']})).
\end{align*}
If $\xi \in \{0, 1\}^{n - 1} \setminus \{ \eta, \eta' \}$, then since $\beta \not\in f_{1,\xi}([0, 1])$, since the functions $f_{1,\xi}$, $f_{1, \xi}'$, $1/h_{1}$ are all of bounded variation, since $v \in \mathfrak{U}_{\beta, \alpha}$ and since $[\xi]$ is a compact interval bounded away from $\beta$, by Proposition~\ref{Prop:propertiesBV}, it follows that the function
\begin{align*}
[0, 1] \ni x \mapsto \frac{1}{h_{1}(x)} \sum_{\xi \in \Sigma^{k} \setminus \{\eta, \eta'\}} \lvert f_{1, \xi}'(x) \rvert \cdot v \circ f_{1, \xi}(x)
\end{align*}
is of bounded variation.  Hence, by Proposition~\ref{prop:rmk1} and Theorems~\ref{thm:MT:2001:sec10} and~\ref{prop:jk2011:th7} together with Remark~\ref{rmk:thm:MT:2001:sec10}, we have that
\begin{align*}
\lim_{k \to \infty} \ln(k) \cdot \mathcal{P}_{1}^{k}(v \cdot \mathds{1}_{[0, 1] \setminus [\eta] \cap [0, 1] \setminus [\eta']})
&= \int \mathcal{P}_{1}^{n}(v \cdot \mathds{1}_{[0, 1] \setminus [\eta] \cap [0, 1] \setminus[\eta']}) \, \mathrm{d}\lambda \cdot h_{1}\\
&= \int v \cdot \mathds{1}_{[0, 1] \setminus [\eta] \cap [0, 1] \setminus [\eta']} \, \mathrm{d}\lambda \cdot h_{1}.
\end{align*}
Therefore, to complete the proof we need to show that 
\begin{align*}
\lim_{k \to +\infty} \ln(k) \cdot \mathcal{P}_{1}^{k}(v \cdot \mathds{1}_{[\eta] \cup [\eta']}) = \int v \cdot \mathds{1}_{[\eta] \cup [\eta']} \, \mathrm{d}\lambda \cdot h_{1}.
\end{align*}
To this end let $m > n$ be a fixed natural number satisfying $\lambda([\xi]) \leq \min \{ \lvert a - \beta \rvert, \lvert b - \beta \rvert \}$ for all $\xi \in \Sigma^{m}$, where $U = (a, b)$ is the open connected set such that $C_{1} v_{\beta, \alpha} \leq v \leq C_{2} v_{\beta, \alpha}$ on $U$, for some constants $C_{1}, C_{2}$.  Let $\nu, \nu' \in \Sigma^{m}$ be the unique words satisfying
\begin{align*}
[\nu] \cap [\nu'] = \{ \beta \}, \quad [\nu] \subset [\eta] \quad \text{and} \quad [\nu'] \subset [\eta'].
\end{align*}
Indeed, we necessarily have that $f_{1, \nu}(0) = \beta = f_{1, \nu'}(0)$.  Using identical arguments to those above, we can conclude that
\begin{align*}
\lim_{k \to +\infty} \ln(k) \cdot \mathcal{P}_{1}^{k}(v \cdot \mathds{1}_{[\eta] \setminus [\nu] \cup [\eta'] \setminus [\nu']}) = \int v \cdot \mathds{1}_{[\eta]\setminus [\nu] \cup [\eta'] \setminus [\nu']} \, \mathrm{d}\lambda \cdot h_{1}.
\end{align*}
Moreover, by positivity of the operator $\mathcal{P}_{1}$ we have that
\begin{align*}
C_{1} \mathcal{P}_{1}^{k}(v_{\beta, \alpha} \cdot \mathds{1}_{[\nu] \cup [\nu']})
\leq \mathcal{P}_{1}^{k}(v \cdot \mathds{1}_{[\nu] \cup [\nu']})
\leq C_{2} 
\mathcal{P}_{1}^{k}(v_{\beta, \alpha} \cdot \mathds{1}_{[\nu] \cup [\nu']}).
\end{align*}
We claim (and will shortly prove) that 
\begin{align}\label{eq:rational_remainder}
\lim_{k \to +\infty} \mathcal{P}_{1}^{k}(v_{\beta, \alpha} \cdot \mathds{1}_{[\nu] \cup [\nu']}) = \int v_{\beta, \alpha} \cdot \mathds{1}_{[\nu] \cup [\nu']} \, \mathrm{d} \lambda \cdot h_{1}.
\end{align}
Assuming this, we may conclude, for all $m \in \mathbb{N}$, that 
\begin{align}\label{eq:rational_remainder(i)}
\liminf_{k \to +\infty} \, \mathcal{P}^{k}_{1}(v) &\geq C_{1} \int v_{\beta, \alpha} \cdot \mathds{1}_{[\nu] \cup [\nu']} \, \mathrm{d} \lambda \cdot h_{1} + \int v \cdot \mathds{1}_{[0, 1] \setminus [\nu] \cap [0, 1] \setminus [\nu']} \, \mathrm{d} \lambda \cdot h_{1}
\end{align}
and
\begin{align}\label{eq:rational_remainder(ii)}
\limsup_{k \to +\infty} \, \mathcal{P}_{1}^{k}(v) &\leq
C_{2} \int v_{\beta, \alpha} \cdot \mathds{1}_{[\nu] \cup [\nu']} \, \mathrm{d} \lambda \cdot h_{1} + \int v \cdot \mathds{1}_{[0, 1] \setminus [\nu] \cap [0, 1] \setminus [\nu']} \, \mathrm{d} \lambda \cdot h_{1}.
\end{align}
(Note that the words $\nu, \nu'$ are dependent on $m$.)  Since the LHS of \eqref{eq:rational_remainder(i)} and \eqref{eq:rational_remainder(ii)} are independent of $m$ and since $\lambda(\nu), \lambda(\nu')$ both converge to zero as $n \to +\infty$, the result follows.

We now prove the equality given in \eqref{eq:rational_remainder}.  By Proposition~\ref{prop:rmk1} and Theorems~\ref{thm:MT:2001:sec10} and~\ref{prop:jk2011:th7} together with Remark~\ref{rmk:thm:MT:2001:sec10} it is sufficient to show that
\begin{align*}
[0, 1] \ni x \mapsto \widehat{T}_{1}^{m}(v_{\beta, \alpha} \cdot \mathds{1}_{[\nu] \cup [\nu']}/h_{1})(x)
\end{align*}
is of bounded variation.  In order to show this, recall that $f_{1, \nu}$ and $f_{1,\nu'}$ are M\"obius transformations and observe that 
\begin{align*}
\widehat{T}_{1}^{m}(v_{\beta, \alpha} \cdot \mathds{1}_{[\nu] \cup [\nu']}/h_{1})(x)
= \sum_{i = 1}^{2} \frac{x}{(c_{i} \cdot x + d_{i})^{2}} \left(\frac{(-1)^{i+1}}{\beta - \frac{a_{i} \cdot x + b_{i}}{c_{i}\cdot x + d_{i}}}\right)^{\alpha},
\end{align*}
where $a_{i}, b_{i}, c_{i}, d_{i} \in \mathbb{Z}$, for $i \in \{ 1, 2\}$, are such that
\begin{align*}
f_{\nu}(x) = \frac{a_{1} \cdot x + b_{1}}{c_{1} \cdot x + d_{1}}
\quad \text{and} \quad
f_{\nu'}(x) = \frac{a_{2} \cdot x + b_{2}}{c_{2} \cdot x + d_{2}}.
\end{align*}
The desired conclusion, namely that $\widehat{T}_{1}^{m}(v_{\beta, \alpha} \cdot \mathds{1}_{[\nu] \cup [\nu']}/h_{1})$ is of bounded variation follows from the following four observations.
\begin{enumerate}
\item For all $t \in (0, 1]$, we have that $V_{[t, 1]}(\widehat{T}_{1}^{m}(v_{\beta, \alpha} \cdot \mathds{1}_{[\nu] \cup [\nu']}/h_{1})) < + \infty$.
\item For $i \in \{ 1, 2\}$, by L'H\^opital's rule we have that
\begin{align*}
\lim_{x \to 0} \frac{(-1)^{i+1} \cdot x}{\beta - \frac{a_{i}\cdot x + b_{i}}{c_{i}\cdot x + d_{i}}} = d_{i}^{2}.
\end{align*}
\item By L'H\^opital's rule, we have that
\begin{align*}
\qquad
\lim_{x \to 0} \widehat{T}_{1}^{m}(v_{\beta, \alpha} \cdot \mathds{1}_{[\nu] \cup [\nu']}/h_{1})(x)
= \sum_{i = 1}^{2} \lim_{x \to 0} \frac{x}{(c_{i} \cdot x + d_{i})^{2}} \left(\frac{(-1)^{i+1}}{\beta - \frac{a_{i}\cdot x + b_{i}}{c_{i}\cdot x + d_{i}}}\right)^{\alpha}
= 0.
\end{align*}
\item We have that
\begin{align*}
\qquad \quad
&\frac{\mathrm{d}}{\mathrm{dx}} \widehat{T}_{1}^{m}(v_{\beta, \alpha} \cdot \mathds{1}_{[\nu] \cup [\nu']}/h_{1})(x)\\
&= \sum_{i = 1}^{2} \frac{\mathrm{d}}{\mathrm{dx}} \frac{x}{(c_{i} \cdot x + d_{i})^{2}} \left(\frac{(-1)^{i+1}}{\beta - \frac{a_{i} \cdot x + b_{i}}{c_{i} \cdot x + d_{i}}}\right)^{\alpha}\\
&= \sum_{i = 1}^{2} \frac{-c_{i} \cdot x + d_{i}}{(c_{i} \cdot x + d_{i})^{3}} \left(\frac{(-1)^{i+1}}{\beta - \frac{a_{i} \cdot x + b_{i}}{c_{i} \cdot x + d_{i}}}\right)^{\alpha}
-
\frac{(-1)^{i+1} \cdot \alpha \cdot x}{(c_{i} \cdot x + d_{i})^{4}}\left(\frac{(-1)^{i+1}}{\beta - \frac{a_{i} \cdot x + b_{i}}{c_{i} \cdot x + d_{i}}}\right)^{\alpha + 1}
\end{align*}
which is non-negative on an open neighbourhood of zero.
\end{enumerate}
The case when $\beta = 1$ is a simplification of the above case.
\end{proof}

\begin{proof}[Proof of Theorem~\ref{thm:Infinite_1} for $\beta$ irrational of intermediate $\alpha$-type]
By linearity of the Perron-Frobenius operator we have that
\begin{align*}
\ln(n) \cdot \mathcal{P}_{1}^{n}(v)
= \ln(n) \cdot \mathcal{P}_{1}(v \cdot \mathds{1}_{[0, 1] \setminus [\omega_{1}(\beta)\vert_{n}]})
+ \ln(n) \cdot \mathcal{P}_{1}(v \cdot \mathds{1}_{[\omega_{1}(\beta)\vert_{n}]}).
\end{align*}
Further, by Lemma~\ref{lem:bv_part_r=1} and the fact that $h_{1} \cdot \widehat{T}_{1}(f) = \mathcal{P}_{1}(f \cdot h_{1})$, we have that
\begin{align*}
\lim_{n \to \infty} \ln(n) \cdot \widehat{T}_{1}^{n}(v \cdot \mathds{1}_{[0, 1] \setminus [\omega_{1}(\beta)\vert_{n}]} / h_{1}) = \int v \; \mathrm{d}\lambda \cdot h_{1}
\end{align*}
uniformly on compact subsets of $(0, 1)$.  Moreover, by the facts that $v \in \mathfrak{U}_{\beta, \alpha}$ is non-negative and $\mathcal{P}_{1}$ is a positive linear operator, there exists a positive constant $C$ with
\begin{align*}
0
\leq \lim_{n \to \infty} \ln(n) \cdot \mathcal{P}_{1}^{n}(v \cdot \mathds{1}_{[\omega_{1}(\beta)\vert_{n}]})
\leq \lim_{n \to \infty} \ln(n) \cdot C \cdot \mathcal{P}_{r}^{n}(v_{n, 1}),
\end{align*}
where we recall that $v_{n, 1} = v_{\beta, \alpha} \cdot \mathds{1}_{[\omega_{1}(\beta)\vert_{n}]}$.  By Lemma~\ref{lem:tail_r_=_1}, this latter limit is equal to zero outside a set of Hausdorff zero.

All that remains to show is that if $\beta \in (0, 1]$ is irrational, pre-periodic with respect to $T_{1}$ and has period length strictly greater than one, then on $\Omega_{1}(\beta)$ we have that
\begin{align*}
\qquad \quad
\liminf_{n \to +\infty} \ln(n) \cdot \mathcal{P}_{1}^{n}(v)
= \int v \, \mathrm{d}\lambda \cdot h_{1}
\quad \text{and} \quad
\limsup_{n \to +\infty} \ln(n) \cdot \mathcal{P}_{1}^{n}(v)
= +\infty;
\end{align*}
and in the case that $\beta \in (0, 1]$ is pre-periodic with respect to $T_{1}$ and has period length equal to one then on the singleton $\Omega_{1}(\beta)$ we have that the limit in \eqref{eq:convergence} is equal to $+\infty$.

By positivity and linearity of $\mathcal{P}_{1}^{n}$ and Lemma~\ref{lem:bv_part_r=1}, it suffices to show, if $\beta \in (0, 1]$ is irrational, pre-periodic with respect to $T_{1}$ and has period length strictly greater than one, then on $\Omega_{1}(\beta)$,
\begin{align*}
\liminf_{n \to +\infty} \ln(n) \cdot v_{n, 1} = 0
\quad \text{and} \quad
\limsup_{n \to +\infty} \ln(n) \cdot v_{n, 1} = +\infty;
\end{align*}
and in the case that $\beta \in (0, 1]$ is pre-periodic with respect to $T_{1}$ and has period length equal to one, then on the singleton $\Omega_{1}(\beta)$,
\begin{align*}
\lim_{n \to +\infty} \ln(n) \cdot v_{n, 1} = +\infty.
\end{align*}
Indeed if $\beta$ is pre-periodic with respect to $T_{1}$ and has period length $l \geq 1$, then letting $n \in \mathbb{N}_{0}$, be the minimal integer so that $T_{1}^{n + k}(\beta) = T_{1}^{n + k + l}(\beta)$, for all $k \in \mathbb{N}_{0}$, we have that
\begin{align*}
f_{1, (\omega_{1, n + j + 1}(\beta), \dots, \omega_{1, n + j + l}(\beta))}(T_{1}^{n + j}(\beta)) = T^{n + j}_{1}(\beta),
\end{align*}
for all $j \in \{ 0, 1, \dots, l-1 \}$.  Further, $\Omega_{1}(\beta) = \{ T^{n}_{1}(\beta), \dots, T^{n + l -1}_{1}(\beta) \}$, and hence, for $j \in \{ 0, 1, \dots, l-1 \}$, it follows that
\begin{align*}
v_{n + j + k \cdot l, 1}(T^{n + j}_{1}(\beta)) = + \infty,
\end{align*}
for all $k \in \mathbb{N}_{0}$.  To complete the proof we will show, for $l > 1$ and $i, j \in \{ 0, 1, \dots, l-1 \}$ with $i \neq j$, that
\begin{align*}
\lim_{k \to +\infty} v_{n + j + k \cdot l, 1}(T^{n + i}_{1}(\beta)) = 0.
\end{align*}
To this end set $L \coloneqq \min \{ \lvert T^{n + j}_{1}(\beta) - T^{n + i}_{1}(\beta) \colon i, j \in \{ 0, 1, \dots, l-1 \} \; \text{and} \; i \neq j \}$ and set
\begin{align*}
a \coloneqq \min \{ T^{n + j}_{1}(\beta) \colon j \in \{ 0, 1, \dots, l-1 \} \}
\quad \text{and} \quad
b \coloneqq \max \{ T^{n + j}_{1}(\beta) \colon j \in \{ 0, 1, \dots, l-1 \} \}.
\end{align*}
Since $\beta$ is irrational and pre-periodic with period $m > 1$, it follows that $0 < a < b < 1$ and therefore,
\begin{align*}
\lvert f_{1, \omega_{1}(\beta)\vert_{n + j + k \cdot l}}'(T^{n + i}_{1}(\beta)) \rvert \leq a^{-2} ((r(n + j + k \cdot l)+1) q_{m(n + j + k \cdot l)} + q_{m(n + j + k \cdot l) - 1})^{-2}
\end{align*}
for all $i, j \in \{ 0, 1, \dots, l-1 \}$ and $k \in \mathbb{N}$.  Further, we have that
\begin{align*}
\lvert \beta - f_{1, \omega_{1}(\beta)\vert_{n + j + k \cdot l}}(T^{n + i}_{1}(\beta)) \rvert
&\geq \lvert f_{1, \omega_{1}(\beta)\vert_{n + j + k \cdot l}}(T^{n + j + k \cdot l}_{1}(\beta)) - f_{1, \omega_{1}(\beta)\vert_{n + j + k \cdot l}}(T^{n + i}_{1}(\beta)) \rvert\\
&\geq \inf_{u \in [a, b]} \lvert f_{1, \omega_{1}(\beta)\vert_{n + j + k \cdot l}}'(u) \rvert \cdot \rvert T^{n + j}_{1}(\beta) - T^{n + i}_{1}(\beta) \rvert\\
&\geq ((r(n + j + k \cdot l)+1) q_{m(n + j + k \cdot l)} + q_{m(n + j + k \cdot l) - 1})^{-2} \cdot L,
\end{align*}
for all $i, j \in \{ 0, 1, \dots, l-1 \}$ with $i \neq j$ and $k \in \mathbb{N}$.  Hence, for all $i, j \in \{ 0, 1, \dots, l-1 \}$ with $i \neq j$, we have
\begin{align*}
0 \leq \lim_{l \to +\infty} v_{n + j + l \cdot m, 1}(T^{n + i}_{1}(\beta))
&\leq \lim_{l \to +\infty} \lvert f_{1, \omega_{1}(\beta)\vert_{n + j + l \cdot m}}'(T^{n + i}_{1}(\beta)) \rvert \cdot \lvert \beta - f_{1, \omega_{1}(\beta)\vert_{n + j + l \cdot m}}(T^{n + i}_{1}(\beta)) \rvert^{-\alpha}\\
&\leq \lim_{l \to +\infty} a^{-2} \cdot L^{-\alpha} \cdot ((r(n + j + k \cdot l)+1) q_{m(n + j + k \cdot l)} + q_{m(n + j + k \cdot l) - 1})^{2\cdot(\alpha - 1)}\\
&= 0.
\end{align*}
This completes the proof.
\end{proof}

\subsubsection{Proof of Theorem~\ref{thm:Infinite_2}}

\begin{proof}[Proof of Theorem~\ref{thm:Infinite_2}\ref{3.3.a}]
Within this proof set
\begin{align*}
\beta = [0; \! \underbrace{1, 1,}_{2 \cdot1} \! 2, \underbrace{1, 1, 1, 1,}_{2 \cdot 2} 2, \underbrace{1, 1, 1, 1, 1, 1,}_{2\cdot3} 2, \dots]
\quad \text{and} \quad
\kappa = [0; \! \underbrace{1, 1,}_{2^{1}} \! 2, \underbrace{1, 1, 1, 1,}_{2^{2}} 2, \underbrace{1, 1, \dots, 1,}_{2^{3}} 2, \dots]
\end{align*}
and, for $n \in \mathbb{N}$, set
\begin{align*}
\Lambda(n, \tau) \coloneqq \begin{cases}
n\cdot(n+2) & \text{if} \; \tau = \beta,\\
2^{n}+n-2 & \text{if} \; \tau = \kappa.
\end{cases}
\end{align*}
Observe that $\beta, \kappa \in [1/2, 1]$.  Letting $a_{n}(\beta)$ and $a_{n}(\kappa)$ denote the $n$-th continued fraction entry of $\beta$ and $\kappa$ respectively, an elementary calculation yields that $a_{\Lambda(n, \beta)-1}(\beta) = a_{\Lambda(n, \kappa)-1}(\kappa) = 2$.  Further, one can show that
\begin{align*}
\Omega_{1}(\beta) = \Omega_{1}(\kappa) = \{ [0; \underbrace{1, 1, \dots, 1,}_{k} 2, \overline{1}] \colon k \in \mathbb{N}_{0} \} \cup \{ \gamma \coloneqq (\sqrt{5}-1)/2 = [0; \overline{1}]\}.
\end{align*}
Recall from \eqref{eq:vnr} that $v_{\tau, \alpha, n, 1} = \lvert f_{1, \omega_{1}(\tau)\vert_{n}}' \rvert \cdot \lvert \tau - f_{1, \omega_{1}(\tau)\vert_{n}} \rvert^{-\alpha}$.  Following the same arguments as in beginning of the proof of Theorem~\ref{thm:Infinite_1}, it is sufficient to show, on $\Omega_{1}(\beta) = \Omega_{1}(\kappa)$, that
\begin{align}\label{eq:aim_bounded}
\limsup_{n \to +\infty} \ln(n) \cdot v_{\beta, \alpha, n, 1} = 0
\quad \text{and} \quad
\limsup_{n \to +\infty} \ln(n) \cdot v_{\kappa, \alpha , n, 1} = \begin{cases}
0 & \text{if} \; \alpha \in (0, 1/2),\\
+\infty & \text{if} \; \alpha \in (1/2, 1].
\end{cases}
\end{align}
To this end fix $k \in \mathbb{N}_{0}$ and set 
\begin{align*}
\zeta_{k} \coloneqq [0; \underbrace{1, 1, \dots, 1,}_{k} 2, \overline{1}] \in [1/3, 1].
\end{align*}
We will show that the equalities given in \eqref{eq:aim_bounded} hold for $\zeta_{k}$, the result for $\gamma$ is a simplification of this case.  To this end let $\tau \in \{ \beta, \kappa \}$.  By the mean value theorem, for each $n \in \mathbb{N}$, there exists $u_{n}(\tau) \in (1/3, 1)$ such that
\begin{align*}
\lvert \tau - f_{1, \omega_{1}(\tau)\vert_{n}}(\zeta_{k})\rvert
&= \lvert T_{1}^{n}(\tau) - \zeta_{k} \rvert \cdot \lvert f_{1, \omega_{1}(\tau)\vert_{n}}'(u_{n}(\tau))\rvert\\
&= \lvert T_{1}^{n}(\tau) - \zeta_{k} \rvert \cdot ( (r(n,\tau) u_{n}(\tau) + 1 ) q_{m(n, \tau)}(\tau) +  q_{m(n, \tau)-1}(\tau) u_{n}(\tau) )^{-2}\\
&\begin{cases}
\geq 5^{-2} \cdot (q_{m(n, \tau)}(\tau))^{-2} \cdot \lvert T^{n}_{1}(\tau) - \zeta_{k} \rvert,\\
\leq (q_{m(n, \tau)}(\tau))^{-2} \cdot \lvert T^{n}_{1}(\tau) - \zeta_{k} \rvert,
\end{cases}
\end{align*}
where $m(n, \tau)$ and $r(n, \tau)$ are as defined in \eqref{eq:def_m_r_k} and where, for $l \in \mathbb{N}_{0}$, the integers $p_{l}(\tau)$ and $q_{l}(\tau)$ are as defined in \eqref{eq:definition_p_q}.  Thus, for $\tau \in \{ \beta, \kappa \}$ and $k \in \mathbb{N}_{0}$, we have that
\begin{align*}
&\limsup_{n \to \infty} \ln(n) \cdot v_{\tau, \alpha, n, 1}(\zeta_{k})\\
&= \limsup_{n \to \infty} \frac{\ln(n)}{((r(n, \tau) \cdot \zeta_{k} +1) \cdot q_{m(n, \tau)}(\tau) + q_{m(n, \tau) - 1} \cdot \zeta_{k})^{2}} \frac{1}{\lvert \tau - f_{1, \omega_{1}(\tau)\vert_{n}}(\zeta_{k}) \rvert^{\alpha}}\\
&\begin{cases}
\geq \displaystyle{\limsup_{n \to \infty} \frac{\ln(n)}{5^{2} \cdot (q_{m(n, \tau)}(\tau))^{2\cdot(1-\alpha)}} \frac{1}{\lvert T_{1}^{n}(\tau) - \zeta_{k} \rvert^{\alpha}}}\\[1em]
\leq \displaystyle{\limsup_{n \to \infty} \frac{5^{2\cdot \alpha} \cdot \ln(n)}{(q_{m(n, \tau)}(\tau))^{2\cdot(1-\alpha)}} \frac{1}{\lvert T_{1}^{n}(\tau) - \zeta_{k} \rvert^{\alpha}}}
\end{cases}\\
&\begin{cases}
\geq \displaystyle{\limsup_{n \to \infty} \frac{\ln(n)}{5^{2} \cdot (q_{m(n,\tau)}(\tau))^{2\cdot(1-\alpha)}} \frac{1}{\lvert T_{1}^{n - (k+1)}(\tau) - \gamma \rvert^{\alpha}} \frac{1}{\lvert (f_{1,1}^{k} \circ f_{1,0} \circ f_{1,1})'(0) \rvert^{\alpha}}}\\[1em]
\leq \displaystyle{\limsup_{n \to \infty} \frac{5^{2 \cdot \alpha} \cdot \ln(n)}{(q_{m(n,\tau)}(\tau))^{2\cdot(1-\alpha)}} \frac{1}{\lvert T_{1}^{n - (k+1)}(\tau) - \gamma \rvert^{\alpha}} \frac{1}{\lvert (f_{1,1}^{k} \circ f_{1,0} \circ f_{1,1})'(1) \rvert^{\alpha}}}.
\end{cases}
\end{align*}
Hence it is sufficient to show that, for $\alpha \in (0, 1)$,
\begin{align}\label{eq:beta-con}
\limsup_{n \to +\infty} \frac{\ln(n)}{(q_{m(n,\beta)}(\beta))^{2\cdot(1-\alpha)}} \frac{1}{\lvert T_{1}^{n - (k+1)}(\beta) - \gamma \rvert^{\alpha}} = 0
\end{align}
and 
\begin{align}\label{eq:varrho-con}
\limsup_{n \to +\infty} \frac{\ln(n)}{(q_{m(n,\kappa)}(\kappa))^{2\cdot(1-\alpha)}} \frac{1}{\lvert T_{1}^{n - (k+1)}(\kappa) - \gamma \rvert^{\alpha}} = \begin{cases}
0 & \text{if} \; \alpha \in (0, 1/2),\\
+\infty & \text{if} \; \alpha \in (1/2, 1).
\end{cases}
\end{align}
We will first show the equality given in \eqref{eq:beta-con} after which we will show the equality given in \eqref{eq:varrho-con}.  For this observe that if $n - (k + 1) = \Lambda(l, \beta) + (l -1)$, for some $l \in \mathbb{N}$, then
\begin{align*}
T_{1}^{n - (k+1)}(\beta) = [0; 2, \underbrace{1, 1, \dots, 1,}_{2 \cdot (l+1)} 2, \underbrace{1, 1, \dots, 1,}_{2\cdot(l+2)} 2, \underbrace{1, 1, \dots, 1,}_{2\cdot(l+3)} 2, \dots] \in [1/3, 1/2],
\end{align*}
and hence,
\begin{align}\label{eq:bounded_1}
\begin{aligned}
\frac{\ln(n)}{(q_{m(n,\beta)}(\beta))^{2\cdot(1-\alpha)}} \frac{1}{\lvert T_{1}^{n - (k+1)}(\beta) - \gamma \rvert^{\alpha}}
&\leq \frac{\ln(\Lambda(l, \beta) + (l -1)+(k+1))}{(q_{\Lambda(l, \beta)}(\beta))^{2\cdot(1-\alpha)}} \frac{1}{\lvert (1/2) - \gamma \rvert^{\alpha}}\\
&\sim \frac{2 \cdot \ln(l)}{(q_{l \cdot (l+2)}(\beta))^{2\cdot(1-\alpha)}} \frac{1}{\lvert (1/2) - \gamma \rvert^{\alpha}}.
\end{aligned}
\end{align}
Since the sequence $(q_{j})_{j \in \mathbb{N}}$ grows exponentially, this latter term converges to zero as $l \to \infty$.  (Here we have used the fact that $n - (k + 1) = \Lambda(l, \beta) + (l -1)$.)

In the case that $n - (k + 1) \not\in \{ \Lambda(j, \beta) + (j -1) \colon j \in \mathbb{N}\}$, set $l = l(n) \in \mathbb{N}$ to be the maximal integer such that $n - (k + 1) > \Lambda(l, \beta) + (l -1)$, in which case
\begin{align*}
T_{1}^{n - (k+1)}(\beta) = [0; \underbrace{1, 1, \dots\dots\dots\dots\dots, 1,}_{\substack{3 \cdot (l+1) + (k+1) + \Lambda(l,\beta) - n \\[0.25em] \leq 2 \cdot (l+1) + 1}} 2, \underbrace{1, 1, \dots, 1,}_{2\cdot(l+2)} 2, \underbrace{1, 1, \dots, 1,}_{2\cdot(l+3)} 2, \dots],
\end{align*}
and hence,
\begin{align}\label{eq:bounded_2}
\begin{aligned}
&\frac{\ln(n)}{(q_{m(n,\beta)}(\beta))^{2\cdot(1-\alpha)}} \frac{1}{\lvert T_{1}^{n - (k+1)}(\beta) - \gamma \rvert^{\alpha}}\\
&= \frac{\ln(n)}{(q_{m(n,\beta)}(\beta))^{2\cdot(1-\alpha)}} \frac{1}{\lvert f_{1, 1}^{3 \cdot (l+1) + (k+1) + \Lambda(l,\beta) - n}(T_{1}^{\Lambda(l+1,\beta) + l}(\beta)) - f_{1, 1}^{3 \cdot (l+1) + (k+1) + \Lambda(l,\beta) - n}(\gamma) \rvert^{\alpha}}\\
&\leq \frac{\ln((l+2)\cdot(l+5))}{(q_{l\cdot(l+2)}(\beta))^{2\cdot(1-\alpha)}} \frac{1}{\inf_{u \in [0, 1]} \lvert (f_{1, 1}^{3 \cdot (l+1) + (k+1) + \Lambda(l,\beta) - n})'(u) \rvert^{\alpha}} \frac{1}{\lvert (1/2) - \gamma \rvert^{\alpha}}\\
&= \frac{\ln((l+2)\cdot(l+5))}{(q_{l\cdot(l+2)}(\beta))^{2\cdot(1-\alpha)}} \frac{(q_{3 \cdot (l+1) + (k+1) + \Lambda(l,\beta) - n}(\gamma))^{\alpha}}{\lvert (1/2) - \gamma \rvert^{\alpha}}\\
&= \frac{\ln((l+2)\cdot(l+5))}{(q_{l\cdot(l+2)}(\beta))^{2\cdot(1-\alpha)}} \frac{(q_{2\cdot(l+1)+1}(\beta))^{\alpha}}{\lvert (1/2) - \gamma \rvert^{\alpha}}.
\end{aligned}
\end{align}
Since the sequence $(q_{j}(\beta))_{j \in \mathbb{N}}$ grows exponentially, this latter term converges to zero as $l = l(n) \to \infty$.  The equality stated in \eqref{eq:beta-con} now follows from \eqref{eq:bounded_1} and \eqref{eq:bounded_2}.

We will now prove the equality given in \eqref{eq:varrho-con}.  The result for, $\alpha \in (0, 1/2)$, follows in a similar manner to the previous case.  Indeed, observe that if if $n - (k + 1) = \Lambda(l, \kappa) + (l -1)$, for some $l \in \mathbb{N}$, then
\begin{align*}
T_{1}^{n - (k+1)}(\kappa) = [0; 2, \underbrace{1, 1, \dots, 1,}_{2^{l+1}} 2, \underbrace{1, 1, \dots, 1,}_{2^{l+2}} 2, \underbrace{1, 1, \dots, 1,}_{2^{l+3}} 2, \dots] \in [1/3, 1/2],
\end{align*}
and hence, for $n$ sufficiently large,
\begin{align}\label{eq:bounded_3}
\begin{aligned}
\frac{\ln(n)}{(q_{m(n,\kappa)}(\kappa))^{2\cdot(1-\alpha)}} \frac{1}{\lvert T_{1}^{n - (k+1)}(\kappa) - \gamma \rvert^{\alpha}}
\leq \frac{(l + 1) \cdot \ln(2)}{(q_{2^{l}}(\kappa))^{2\cdot(1-\alpha)}} \frac{1}{\lvert (1/2) - \gamma \rvert^{\alpha}}.
\end{aligned}
\end{align}
The sequence $(q_{j}(\kappa))_{j \in \mathbb{N}}$ grows exponentially, in particular there exists a positive constant $c$ so that $\kappa^{-j}/c \leq q_{j}(\kappa) \leq c \cdot \kappa^{-j}$. Therefore, the latter term in \eqref{eq:bounded_3} converges to zero as $l \to \infty$.  (Here we have used the fact that $n - (k + 1) = \Lambda(l, \kappa) + (l -1)$.)

In the case that $n - (k + 1) \not\in \{ \Lambda(j, \kappa) + (j -1) \colon j \in \mathbb{N}\}$, set $l = l(n) \in \mathbb{N}$ to be the maximal integer such that $n - (k + 1) > \Lambda(l, \kappa) + (l -1)$, in which case
\begin{align*}
T_{1}^{n - (k+1)}(\kappa) = [0; \!\underbrace{1, 1, \dots\dots\dots\dots\dots, 1,}_{\substack{2^{l+1} + (l+1) + (k+1) + \Lambda(l,\kappa) - n\\[0.25em] \leq 2^{l+1} + 1}} \! 2, \underbrace{1, 1, \dots, 1,}_{2^{l+2}} 2, \underbrace{1, 1, \dots, 1,}_{2^{l+3}} 2, \dots].
\end{align*}
We also observe that $q_{i}(\gamma) \leq q_{i}(\kappa)$, for all $i \in \mathbb{N}_{0}$.  Therefore, it follows that
\begin{align}\label{eq:bounded_4}
\begin{aligned}
\frac{\ln(n)}{(q_{m(n,\kappa)}(\kappa))^{2\cdot(1-\alpha)}} \frac{1}{\lvert T_{1}^{n - (k+1)}(\kappa) - \gamma \rvert^{\alpha}}
&\leq \frac{(l+2) \cdot \ln(2)}{(q_{2^{l}}(\kappa))^{2\cdot(1-\alpha)}} (q_{2\cdot2^{l} +2}(\gamma))^{\alpha}\\
&\leq \frac{(l+2) \cdot \ln(2)}{(q_{2^{l}}(\gamma))^{2\cdot(1-\alpha)} \cdot (q_{2\cdot2^{l} +2}(\gamma))^{-\alpha}}.
\end{aligned}
\end{align}
Since there exists a positive constant $c$ so that $\gamma^{-j}/c \leq q_{j}(\gamma) \leq c \cdot \gamma^{-j}$, if $\alpha \in (0, 1/2)$, this latter term converges to zero as $l = l(n) \to \infty$.  The equality in \eqref{eq:varrho-con} for $\alpha \in (0, 1/2)$ follows from \eqref{eq:bounded_3} and \eqref{eq:bounded_4}.

Let us now examine the case  that $\alpha \in (1/2, 1)$.  It follows from an inductive argument that, for all $n \in \mathbb{N}$, $q_{l}(\kappa) \leq 2^{n} \cdot q_{l}(\gamma)$ for all integers $l \in [\Lambda(n, \kappa), \Lambda(n+1, \kappa))$.  Further, for all $n \in \mathbb{N}$ we have that
\begin{enumerate}
\item $\displaystyle{\lvert \gamma - T_{1}^{\Lambda(n, \kappa)+n-1}(\kappa) \rvert
= \lvert \gamma - [0; 2, \underbrace{1, \dots, 1,}_{2^{n+1}} 2, \underbrace{1, \dots, 1,}_{2^{n+2}} 2\dots] \rvert \geq \lvert \gamma - (1/2) \rvert} \;$
and
\item $\displaystyle{\lvert \gamma - T_{1}^{\Lambda(n, \kappa)+n+1}(\kappa) \rvert
= \lvert \gamma - [0; \underbrace{1, \dots, 1,}_{2^{n}} 2, \underbrace{1, \dots, 1,}_{2^{n+1}} 2, \dots] \rvert
\leq \left\lvert \gamma - \frac{p_{2^{n}}(\gamma)}{q_{2^{n}}(\gamma)} \right\rvert
\leq \frac{1}{(q_{2^{n}}(\gamma))^{2}}}$.
\end{enumerate}
Therefore, if $\alpha \in (1/2, 1)$, since there exists a positive constant $\mathfrak{s}$ so that $\gamma^{-n} / \mathfrak{s} \leq q_{n}(\gamma) \leq \mathfrak{s} \cdot \gamma^{-n}$, for all $n \in \mathbb{N}$, we have that
\begin{align*}
\limsup_{n \to +\infty} \frac{\ln(\Lambda(n,\kappa)+n+1)}{(q_{\Lambda(n,\kappa)}(\kappa))^{2\cdot(1-\alpha)}} \frac{1}{\lvert T_{1}^{\Lambda(n,\kappa)+n+1}(\kappa) - \gamma \rvert^{\alpha}}
&\geq \limsup_{n \to +\infty} \frac{n \cdot \ln(2) \cdot (q_{2^{n}}(\gamma))^{2\cdot\alpha}}{2^{2\cdot n\cdot(1-\alpha)}\cdot(q_{2^{n}+n-2}(\gamma))^{2\cdot(1-\alpha)}}\\
&\geq \limsup_{n \to +\infty} \frac{n \cdot \ln(2)}{\mathfrak{s}^{2} \cdot \gamma^{2^{n+1} \cdot (1-2\cdot\alpha) + 2\cdot(3\cdot n - 2)\cdot(1 - \alpha)}}
= + \infty.
\end{align*}
Moreover, since the sequence $(q_{j}(\kappa))_{j \in \mathbb{N}}$ grows exponentially, it follows that
\begin{align*}
\liminf_{n \to +\infty} \frac{\ln(\Lambda(n,\kappa)+n-1)}{(q_{\Lambda(n,\kappa)-1}(\kappa))^{2\cdot(1-\alpha)}} \frac{1}{\lvert T_{1}^{\Lambda(n,\kappa)+n-1}(\kappa) - \gamma \rvert^{\alpha}}
\leq \liminf_{n \to +\infty} \frac{\ln(\Lambda(n,\kappa)+n-1)}{(q_{\Lambda(n,\kappa)-1}(\kappa))^{2\cdot(1-\alpha)}} \frac{1}{\lvert \gamma - (1/2) \rvert^{\alpha}} = 0.
\end{align*}
This completes the proof.
\end{proof}

\begin{proof}[Proof of Theorem~\ref{thm:Infinite_2}\ref{3.3.b}]
Since $\lim_{n \to + \infty} a_{n} = +\infty$, we have that $\Omega_{1}(\beta) = \{ 1/k \colon k \in \mathbb{N} \} \cup \{ 0 \}$.  Let $v_{\beta, \alpha, n, 1}$ be as in \eqref{eq:vnr}.  Following the same arguments as in beginning of the proof of Theorem~\ref{thm:Infinite_1}, it is sufficient to show, for a fixed $k \in \mathbb{N}$, that
\begin{align*}
\limsup_{n \to +\infty} \ln(n) \cdot v_{\beta, \alpha, n, 1}(1/k)
\begin{cases}
= 0 & \text{if} \; \displaystyle{\limsup_{j \to \infty}  \mathscr{S}_{k, j} = 0},\\
> 0 & \text{if} \; \displaystyle{\limsup_{j \to \infty} \mathscr{S}_{k, j} > 0}.
\end{cases}
\quad \text{and} \quad 
\liminf_{n \to +\infty} \ln(n) \cdot v_{\beta, \alpha, n, 1}(1/k) = 0.
\end{align*}
To this end fix $k \in \mathbb{N}$ and, for $n \in \mathbb{N}$, set $z = z(n) \coloneqq T_{1}^{n}(\beta)$.  (Note, $z$ is the unique real number in $[0, 1]$ such that $f_{1, \omega_{1}(\beta)\vert_{n}}(z) = \beta$.)  If $z \in (1/(k+1), 1/k)$, then, by the mean value theorem, there exists $u = u(n) \in (1/(k+1), 1/k)$ such that
\begin{align*}
\lvert \beta - f_{1, \omega_{1}(\beta)\vert_{n}}(1/k)\rvert
&= \lvert f_{1, \omega_{1}(\beta)\vert_{n}}(z) - f_{1, \omega_{1}(\beta)\vert_{n}}(1/k)\rvert\\
&= \lvert 1/k - z \rvert \cdot \lvert f_{1, \omega_{1}(\beta)\vert_{n}}'(u)\rvert\\
&= \lvert 1/k - T^{n}_{1}(\beta) \rvert \cdot \lvert (r(n) \cdot u + 1 ) \cdot q_{m(n)} +  q_{m(n)-1} \cdot u \rvert^{-2}\\
&\begin{cases}
\geq k^{2} \cdot \lvert 1/k - T^{n}_{1}(\beta) \rvert \cdot \lvert (r(n) + k) \cdot q_{m(n)} +  q_{m(n)-1} \rvert^{-2},\\
\leq (k+1)^{2} \cdot \lvert 1/k - T^{n}_{1}(\beta) \rvert \cdot \lvert (r(n) + k + 1) \cdot q_{m(n)} +  q_{m(n)-1} \rvert^{-2}.
\end{cases}
\end{align*}
If $z \not\in (1/(k+1), 1/k)$, then, since $f_{1, \omega_{1}(\beta)\vert_{n}}$ is either order preserving or order reversing, we have for $n \in \mathbb{N}$ sufficiently large that
\begin{align*}
\lvert \beta - f_{1, \omega_{1}(\beta)\vert_{n}}(1/k)\rvert
&= \lvert f_{1, \omega_{1}(\beta)\vert_{n}}(z) - f_{1, \omega_{1}(\beta)\vert_{n}}(1/k)\rvert\\
&\geq \begin{cases}
\lvert f_{1, \omega_{1}(\beta)\vert_{n}}(1/2) - f_{1, \omega_{1}(\beta)\vert_{n}}(1)\rvert & \quad\text{if} \; k = 1,\\
\min\{ \lvert f_{1, \omega_{1}(\beta)\vert_{n}}(1/(k+1)) - f_{1, \omega_{1}(\beta)\vert_{n}}(1/k)\rvert, &\\
\hspace{1.9em}\lvert f_{1, \omega_{1}(\beta)\vert_{n}}((2k-1)/(2k(k-1))) - f_{1, \omega_{1}(\beta)\vert_{n}}(1/k)\rvert \} & \quad \text{otherwise}.
\end{cases}
\end{align*}
By the mean value theorem there exists $u \in (1/(k+1), (2k-1)/(2k(k-1)))$ if $k \neq 1$ and $u \in (1/2, 1)$ if $k = 1$ such that
\begin{align*}
\lvert \beta - f_{1, \omega_{1}(\beta)\vert_{n}}(x)\rvert
&\geq (2 \cdot k \cdot (k + 1))^{-1} \cdot \lvert f_{1, \omega_{1}(\beta)\vert_{n}}'(u)\rvert\\
&= (2 \cdot k \cdot (k+1))^{-1} \cdot \rvert (r(n) \cdot u + 1 ) \cdot q_{m(n)} +  q_{m(n)-1} \cdot u \rvert^{-2}\\
&\geq (3 \cdot 2\cdot k)^{-1} \cdot \lvert (r(n) + \max\{ k-1, 1\})\cdot q_{m(n)} +  q_{m(n)-1} \rvert^{-2}.
\end{align*}
We now consider the following two cases $z \not\in (1/(k+1), 1/k)$ and $z \in (1/(k+1), 1/k)$.
\begin{enumerate}
\item If $z \not\in (1/(k+1), 1/k)$, then
\begin{align*}
0
\leq \ln(n) \cdot v_{\beta, \alpha, n, 1}(1/k)
&= \frac{\ln(n)}{( (r(n)/k + 1 ) \cdot q_{m(n)} +  q_{m(n)-1}/k )^{2}} \, \frac{1}{\lvert \beta - f_{1, \omega_{1}(\beta)\vert_{n}}(1/k) \rvert^{\alpha}}\\
&\leq \frac{6^{2 \cdot \alpha} \cdot k^{2\cdot(1-\alpha)} \cdot \ln(n)}{((r(n) + 1) \cdot q_{m(n)} + q_{m(n)-1})^{2\cdot(1-\alpha)}}.
\end{align*}
Since $(r(n) + 1) \cdot q_{m(n)} + q_{m(n)-1} > n$, for all $n \in \mathbb{N}$, it follows that
\begin{align*}
\qquad \quad
\liminf_{n \to +\infty} \ln(n) \cdot v_{\beta, \alpha, n,1}(1/k) = 0.
\end{align*}
\item If $z \in (1/(k+1), 1/k)$, then $z = T_{1}^{n}(\beta) = [0; k, a_{m(n)}, a_{m(n)+1}, \dots ]$; that is $n = n_{k, m(n)}$. Thus, we have that
\begin{align*}
\qquad \quad
&\limsup_{j \to +\infty} \ln(n_{k, j}) \cdot v_{\beta, \alpha, n_{k, j},1}(1/k)\\
&= \limsup_{j \to +\infty} \frac{k^{2} \cdot \ln(n_{k, j})}{( (r(n_{k, j}) + k ) \cdot  q_{m(n_{k, j})} +  q_{m(n_{k, j})-1} )^{2}} \, \frac{1}{\lvert \beta - f_{1, \omega_{1}(\beta)\vert_{n_{k, j}}}(1/k) \rvert^{\alpha}}\\[0.5em]
&\begin{cases}
\displaystyle{\leq \limsup_{j \to +\infty} \frac{k^{2\cdot(1-\alpha)} \cdot \ln(n_{k, j})}{\lvert 1/k - T_{1}^{n_{k, j}}(\beta) \rvert^{\alpha} \cdot  ( (r(n_{k, j}) + k ) \cdot  q_{m(n_{k, j})} +  q_{m(n_{k, j})-1} )^{2\cdot(1-\alpha)}}}\\[1em]
\displaystyle{\geq \limsup_{j \to +\infty} \frac{k^{2\cdot(1+\alpha)} \cdot \ln(n_{k, j})}{2^{2\cdot \alpha} \cdot \lvert 1/k - T_{1}^{n_{k, j}}(\beta) \rvert^{\alpha} \cdot  ( (r(n_{k, j}) + k) \cdot  q_{m(n_{k, j})} +  q_{m(n_{k, j})-1} )^{2\cdot(1-\alpha)}}
}\\[1em]
\end{cases}\\[0.5em]
&\begin{cases}
\displaystyle{\leq \limsup_{j \to +\infty} \frac{k^{2} \cdot (a_{j+1} + 1)^{\alpha} \cdot \ln(n_{k, j})}{(q_{j})^{2\cdot(1-\alpha)}}}\\[1em]
\displaystyle{\geq \limsup_{j \to +\infty} \frac{k^{2(1+2\cdot\alpha)} \cdot (a_{j+1})^{\alpha} \cdot \ln(n_{k, j})}{2^{2\cdot\alpha}\cdot (q_{j})^{2\cdot(1-\alpha)}}}\\[1em]
\end{cases}\\[0.5em]
&\begin{cases}
= \displaystyle{\limsup_{j \to +\infty} k^{2} \cdot \mathscr{S}_{k, j}}\\[1em]
= \displaystyle{\limsup_{j \to +\infty} k^{2\cdot(1+2\cdot\alpha)} \cdot 4^{-\alpha} \cdot \mathscr{S}_{k, j}.}
\end{cases}
\end{align*}
\end{enumerate}
This completes the proof.
\end{proof}

\section{Proof of Proposition~\ref{prop:rmk1}}\label{sec:appendix}

\begin{proof}[Proof of Proposition~\ref{prop:rmk1} - Conditions (R1) and (R2)]
By definition of the norm $\lVert \cdot \rVert_{\mathrm{BV}(Y)}$ and since the support of any function $f \in \mathrm{BV}(Y)$ is a subset of $Y = [1/2, 1]$, we have that if $f \in \mathrm{BV}(Y)$ then $f \in \mathcal{L}^{1}_{1}([0, 1])$ and that $\lVert \cdot \rVert_{\mathcal{L}^{\infty}} \leq \lVert \cdot \rVert_{\infty} \leq \lVert \cdot \rVert_{\mathrm{BV}(Y)}$.  Thus it remains to show that $R(1)(f) \in \mathrm{BV}(Y)$, for all $f \in \mathrm{BV}(Y)$.  To this end let $f \in \mathrm{BV}(Y)$ be fixed. By \cite[Proposition 1]{S:2002} and \cite[Proposition 1 (p.\ 33)]{JK:2011}, we have that $R(1)$ is a positive linear operator and that
\begin{align}\label{eq:induced_operator}
\int R(1)(w) \cdot u \, \mathrm{d}\mu_{1} = \int w \cdot u \circ T_{1}^{\phi_{Y}} \, \mathrm{d}\mu_{1},
\end{align}
for all $w \in \mathcal{L}_{1}^{1}(\mu_{1}\vert_{Y})$ and $u \in \mathcal{L}_{1}^{\infty}(Y)$.  Hence, by Propositions~\ref{Prop:propertiesBV}(\ref{BV:value}), for all $x \in [0, 1]$ we have that
\begin{align}\label{eq:DF1}
\begin{aligned}
\lvert R(1)(f) (x) \rvert 
\leq 2\int R(1)( \lvert f \rvert ) \, \mathrm{d}\mu_{1} + V_{Y}(R(1)(f))
= 2  \cdot \lVert f \rVert_{1, 1} + V_{Y}(R(1)(f)).
\end{aligned}
\end{align}
Thus, it suffice to show that the variation of $R(1)(f)$ is bounded. Moreover by Proposition~\ref{Prop:propertiesBV-Complex} we may assume that $f$ is $\mathbb{R}$-valued.  In order to do this we will use the fact that $R(1)$ is a positive linear operator, \eqref{eq:induced_operator} and Proposition~\ref{Prop:propertiesBV}.  Observe, for $k \in \mathbb{N}$, that $U_{k} \coloneqq \overline{\{ y \in Y \colon \phi_{Y}(y) = k \}} = [k/(k+1), (k+1)/(k+2)]$ and let $\tau = \tau_{Y}$ be as in Propositions~\ref{Prop:propertiesBV}(\ref{BV:alt}).   For $\psi \in \tau$ and, for $k \in \mathbb{N}$, let $g_{k}, \psi_{k} \colon [0, 1] \to \mathbb{R}$ denote the functions
\begin{align*}
g_{k}(x) &\coloneqq 
\begin{cases}
-k \cdot x +2  \cdot k - 1 - (k-1) / x & \text{if} \; x \in U_{k},\\
0 & \text{otherwise},
\end{cases}
\quad \text{and} \quad
\psi_{k}(x) &\coloneqq 
\begin{cases}
-\psi \circ T_{1}^{k}(x) & \text{if} \; x \in U_{k} \setminus \partial U_{k},\\
0 & \text{otherwise.}
\end{cases}
\end{align*}
Indeed, on the interior of $U_{k}$, we have that $g_{k} = -T_{1}^{k} \cdot h_{1}/(T_{1}^{k})'$.  Via elementary calculations, one can conclude, for $k \in \mathbb{N}$, that $\psi_{k} \in \tau_{_{U_{k}}}$, that the function $g_{k}$ is continuous on $U_{k}$ and that
\begin{align*}
\left\lVert g_{k}\lvert_{U_{k}} \right\rVert_{\infty} =
\begin{cases}
1/2 & \text{if} \, k = 1,\\
3 - 2^{3/2} & \text{if} \, k = 2,\\
2/((k+1) \cdot (k+2)) & \text{otherwise,}
\end{cases}
\, \text{and} \;\,
V_{U_{k}}(g_{k}) = \begin{cases}
1/6 & \text{if} \, k = 1,\\
17/3 - 2^{5/2} & \text{if} \, k = 2,\\
(k-2)/(k \cdot (k+1) \cdot (k+2)) & \text{otherwise}.
\end{cases}
\end{align*}
Hence, we have that
\begin{align*}
\int R(1)(f)(x)  \cdot \psi'(x) \, \mathrm{d}\lambda(x) 
&= \int f(x) \cdot T_{1}^{\phi_{Y}}(x) \cdot \psi' \circ T_{1}^{\phi_{Y}}(x) \, \mathrm{d}\mu(x)\\
&= \sum_{k = 1}^{+\infty} \int \mathds{1}_{U_{k}}(x) \cdot f(x) \cdot g_{k}(x) \cdot \psi_{k}'(x) \, \mathrm{d}\lambda(x)\\
&\leq \sum_{k = 1}^{+\infty} V_{U_{k}}( g_{k} ) \cdot \lVert f \rVert_{\infty} + V_{U_{k}}( f ) \cdot \lVert g_{k} \rVert_{\infty}\\
&\leq (6 - 2^{5/2})  \cdot \lVert f \rVert_{\infty} + (V_{Y}( f ))/2.
\end{align*}
In particular, setting $c \coloneqq 6 - 2^{5/2}$, we have that
\begin{align}\label{eq:DF}
V_{Y}(R(1)(f)) \leq c \cdot \lVert f \rVert_{\infty} + (V_{Y}(f))/2,
\end{align}
for all $f \in \mathrm{BV}(Y)$.  Combing this with \eqref{eq:DF1} yields that
\begin{align*}
\lVert R(1)(f)\rVert_{\mathrm{BV}(Y)}
= \lVert R(1)(f)\rVert_{\infty} + V_{Y}(R(1)(f))
&\leq 2 \cdot \lVert f \rVert_{1, 1} + 2 \cdot V_{Y}(R(1)(f))\\
&\leq 2 \cdot \lVert f \rVert_{1, 1} + \lVert f \rVert_{\mathrm{BV}(Y)}
< + \infty,
\end{align*}
which completes the proof.
\end{proof}

\begin{remark}
Since $R(1) \coloneqq \sum_{n = 1}^{+\infty} R_{n}$, as a corollary to Condition (R3), we obtain an alternative proof to the fact that $R(1)(f) \in \mathrm{BV}(Y)$ for all $f \in \mathrm{BV}(Y)$.  However, the above calculations will be extremely useful in the proof of Condition (R4).
\end{remark}

\begin{proof}[Proof of Proposition~\ref{prop:rmk1} - Condition (R3)]
Since $\widehat{T}_{1}$ is a linear operator, powers of $\widehat{T}_{1}$ are linear operators and so, $R_{n}$ is a linear operator, for all $n \in \mathbb{N}$.  We will now show that the operator norm of $R_{n}\vert_{\mathrm{BV}(Y)}$ is bounded above by $8  \cdot \mu_{1}( \{ y \in Y \colon \phi_{Y}(y) = n \} )$.  We will prove the result for integers $n \geq 3$, an explicit calculation will yield the result for $n \in \{ 1, 2 \}$.  To this end let $n \geq 3$ denote a fixed integer.  Recall, for $n \in \mathbb{N}$, that $U_{n} \coloneqq \{ y \in Y \colon \phi_{Y}(y) = n \} = f_{1, 1} \circ f_{1, 0}^{n-1}([0, 1))$.  The representation of $\widehat{T}_{1}$ given in \eqref{eqn:FareyDual} together with an inductive argument yields, for $f \in \mathrm{BV}(Y)$, that
\begin{align*}
\widehat{T}_{1}^{n}(\mathds{1}_{U_{n}} \cdot f)
= f_{1, 0}^{n} \cdot \prod_{k = 0}^{n-2} f_{1, 1}\circ f_{1, 0}^{k} \cdot \mathds{1}_{[1/2, 1)} \cdot \left(f \circ f_{1, 1} \circ f_{1, 0}^{n-1}\right).
\end{align*}
Since, for $k \in \mathbb{N}$ and $x \in [0, 1]$,
\begin{align}\label{eq:contractions_induced}
f_{1, 0}^{k}(x) = \frac{x}{1 + k \cdot x}
\quad \text{and} \quad
f_{1, 1}\circ f_{1, 0}^{k}(x) = \frac{1+k \cdot x}{1+(k+1) \cdot x},
\end{align}
it follows that 
\begin{align*}
\rVert \mathds{1}_{[1/2, 1)} \cdot f_{1, 0}^{k} \rVert_{\infty} = \frac{1}{1 + k}
\quad \text{and} \quad
\rVert \mathds{1}_{[1/2, 1)} \cdot f_{1, 1}\circ f_{1, 0}^{k} \rVert_{\infty} = \frac{2 + k}{2 + (k+1)},
\end{align*}
and hence, that
\begin{align*}
\left\lVert \mathds{1}_{[1/2, 1)}\cdot f_{0}^{n} \cdot  \prod_{k = 0}^{n-2} f_{1}\circ f_{0}^{k} \right\rVert_{\infty}
\leq \frac{1}{1+n} \prod_{k = 0}^{n-2} \frac{2 + k}{2 + (k+1)}
= \frac{2}{(n+1)^{2}}.
\end{align*}
Moreover, since $f_{1, 1} \circ f_{1, 0}^{k-1}$ is a positive monotonic decreasing contracting $C^{1}$-function, we have that 
\begin{align*}
V_{Y}(f \circ f_{1, 1} \circ f_{1, 0}^{n-1}) \leq V_{Y}(f)
\quad \text{and} \quad
V\left(\prod_{k = 0}^{n-2} f_{1, 1}\circ f_{1, 0}^{k}\right) \leq \left\lVert \prod_{k = 0}^{n-2} f_{1, 1} \circ f_{1, 0}^{k} \right\rVert_{\infty} \leq \frac{2}{n+1}.
\end{align*}
This in tandem with Proposition~\ref{Prop:propertiesBV}(\ref{BV:Lin+prod}) implies, for a $\mathbb{R}$-valued function $f \in \mathrm{BV}(Y)$, that
\begin{align*}
&\rVert R_{n} (f)\lVert_{\mathrm{BV}(Y)}\\
&= \left \lVert \mathds{1}_{Y} \cdot \widehat{T}^{n}(\mathds{1}_{U_{k}} \cdot f) \right\rVert_{\infty} + V\left(\mathds{1}_{Y} \cdot\widehat{T}^{n}(\mathds{1}_{U_{k}} \cdot f) \right)\\
&\leq \left\lVert \mathds{1}_{[1/2, 1)} \cdot f_{1, 0}^{n} \cdot  \prod_{k = 0}^{n-2} f_{1, 1}\circ f_{1, 0}^{k} \cdot \left(f \circ f_{1, 1} \circ f_{1, 0}^{n-1}\right) \right\rVert_{\infty} +V \left( \mathds{1}_{[1/2, 1)} \cdot f_{1, 0}^{n} \cdot  \prod_{k = 0}^{n-2} f_{1, 1}\circ f_{1, 0}^{k} \cdot \left(f \circ f_{1, 1} \circ f_{1, 0}^{n-1}\right)\right)\\
&\leq \lVert \mathds{1}_{[1/2, 1)} \cdot f_{1, 0}^{n} \rVert_{\infty} \cdot \left\lVert \mathds{1}_{[1/2, 1)} \cdot  \prod_{k = 0}^{n-2} f_{1, 1}\circ f_{1, 0}^{k} \right\rVert_{\infty} \cdot \left( 2 \cdot \lVert f \rVert_{\infty} + V_{Y}(f) \right)\\
&\leq 4 \cdot (n+1)^{-3} \cdot \lVert f \rVert_{\mathrm{BV}(Y)}.
\end{align*}
It now follows from linearity of the operator $R(1)$, the triangle inequality and Proposition~\ref{Prop:propertiesBV-Complex}(\ref{BV:Complex3}), that  $\rVert R_{n} (f)\lVert_{\mathrm{BV}(Y)} \leq 8 \cdot (n+1)^{-3} \lVert f \rVert_{\mathrm{BV}(Y)}$, for all $f \in \mathrm{BV}(Y)$.  Finally, observe that
\begin{align*}
\mu_{1}(U_{n})
= \int \mathds{1}_{U_{n}}(x) \cdot x^{-1} \, \mathrm{d}\lambda(x)
= \ln\left( 1 + \frac{1}{n  \cdot (n+2)} \right)
\geq \frac{1}{n \cdot (n+2)} - \frac{1}{2  \cdot n^{2}  \cdot (n+2)^{2}} \geq \frac{1}{(n+1)^{3}}.
\end{align*}
This completes the proof.
\end{proof}

In order to prove condition (R4) we will use the following theorem (a generalisation of earlier results by Doeblin and Fortet \cite{DF:1937} and Ionescu-Tulcea and Marinescu \cite{ITM:1950}), which gives sufficient criterion for an operator to be quasi-compact.

\begin{definition}[Quasi-compact]
A bounded linear operator $L$ on a Banach space $\mathfrak{L}$ with spectral radius $\rho(L)$ is called \textit{quasi-compact} if there is a direct sum decomposition $\mathfrak{L} = \mathfrak{F} \oplus \mathfrak{H}$ and $0 < \rho < \rho(L)$ where
\begin{enumerate}
\item $\mathfrak{F}, \mathfrak{H}$ are closed and $L$-invariant, that is, $L(\mathfrak{H}) \subseteq \mathfrak{H}$ and $L(\mathfrak{F}) \subset \mathfrak{F}$,
\item $\mathfrak{F}$ is finite dimensional and all eigenvalues of $L\lvert_{\mathfrak{F}} \colon \mathfrak{F} \circlearrowleft$ have modulus larger than $\rho$ and
\item the spectral radius of $L\lvert_{\mathfrak{H}} \colon \mathfrak{H} \circlearrowleft$ is smaller than $\rho$.
\end{enumerate}
\end{definition}

\begin{theorem}[{\cite[Theorem XIV.3]{HH:2001}}]\label{thm:Herrion}
Suppose that $(\mathfrak{L}, \lVert \cdot \rVert_{\mathfrak{L}})$ is a Banach space and $L \colon \mathfrak{L} \circlearrowleft$ is a bounded linear operator with spectral radius $\rho(L)$.  Assume that there exists a semi-norm $\lVert \cdot \rVert'_{\mathfrak{L}}$ with the following properties.
\begin{description}
\item[\hspace{1em}Continuity] The semi-norm $\lVert \cdot \rVert'_{\mathfrak{L}}$ is continuous on $\mathfrak{L}$.
\item[\hspace{1em}Pre-compactness] For a sequence $( f_ {n})_{n \in \mathbb{N}}$ in $\mathfrak{L}$, if $\sup_{n \in \mathbb{N}} \lVert f_{n} \rVert_{\mathfrak{L}} < +\infty$, then there exists a subsequence $( n_{k} )_{k \in \mathbb{N}}$ of $\mathbb{N}$ and $g \in \mathfrak{L}$ with $\lim_{k \to +\infty} \lVert L(f_{n_{k}}) - g \rVert'_{\mathfrak{L}} = 0$.
\item[\hspace{1em}Boundedness] There exists $M > 0$ such that $\lVert L(f) \rVert'_{\mathfrak{L}} \leq \lVert f \rVert'_{\mathfrak{L}}$, for all $f \in \mathfrak{L}$.
\item[\hspace{1em}Doeblin-Fortet Inequality] There exist $k \in \mathbb{N}$, $r \in (0, \rho(L))$ and $R \geq 0$ so that, for all $f \in \mathfrak{L}$,
\begin{align*}
\lVert L^{k}(f) \rVert_{\mathfrak{L}} \leq r^{k} \cdot \lVert f \rVert_{\mathfrak{L}} + R \cdot \lVert f \rVert'_{\mathfrak{L}}.
\end{align*}
\end{description}
Under these conditions the operator $L \colon \mathfrak{L} \circlearrowleft$ is quasi-compact.
\end{theorem}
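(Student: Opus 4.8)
The plan is to deduce quasi-compactness of $L$ from a bound on its essential spectral radius, namely $\rho_{\mathrm{ess}}(L)\le r$, which is the crux; once this is established the conclusion follows from classical Riesz--Schauder theory (this is essentially Hennion's argument). First I would normalise: replacing $L$ by $L/\rho(L)$ rescales $r$ to $r/\rho(L)<1$ and $R$ to $R/\rho(L)^{k}$, leaves all four hypotheses intact, and changes quasi-compactness only through a rescaling of the threshold $\rho$; so there is no loss in assuming $r\in(0,1)$. Iterating the Doeblin--Fortet inequality $\lVert L^{k}g\rVert_{\mathfrak{L}}\le r^{k}\lVert g\rVert_{\mathfrak{L}}+R\lVert g\rVert'_{\mathfrak{L}}$ along $g=L^{jk}f$ yields, for every $n\in\mathbb{N}$, the estimate $\lVert L^{nk}f\rVert_{\mathfrak{L}}\le r^{nk}\lVert f\rVert_{\mathfrak{L}}+R\sum_{j=0}^{n-1}r^{(n-1-j)k}\lVert L^{jk}f\rVert'_{\mathfrak{L}}$; invoking the Boundedness property $\lVert L^{j}f\rVert'_{\mathfrak{L}}\le\lVert f\rVert'_{\mathfrak{L}}$ this collapses to the uniform bound $\lVert L^{nk}f\rVert_{\mathfrak{L}}\le r^{nk}\lVert f\rVert_{\mathfrak{L}}+R_{0}\lVert f\rVert'_{\mathfrak{L}}$ with $R_{0}\coloneqq R/(1-r^{k})$, but for the next step the finer version with the intermediate seminorms is needed.

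Next I would establish $\rho_{\mathrm{ess}}(L)\le r$. The Pre-compactness hypothesis says precisely that $L$, viewed as a map from $(\mathfrak{L},\lVert\cdot\rVert_{\mathfrak{L}})$ into the completion of $\mathfrak{L}$ for $\lVert\cdot\rVert'_{\mathfrak{L}}$, sends norm-bounded sets to relatively compact sets. Given a sequence $(f_{n})$ with $\lVert f_{n}\rVert_{\mathfrak{L}}\le 1$, a diagonal extraction built from repeated application of Pre-compactness — each sequence $(L^{j}f_{n})_{n}$ being again $\lVert\cdot\rVert_{\mathfrak{L}}$-bounded since $L$ is bounded — produces a subsequence, still written $(f_{n})$, along which $(L^{j}f_{n})_{n}$ is $\lVert\cdot\rVert'_{\mathfrak{L}}$-Cauchy for every $j\ge 1$. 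Feeding this into the finer iterated Doeblin--Fortet inequality applied to $f_{m}-f_{n}$, the terms $\lVert L^{jk}(f_{m}-f_{n})\rVert'_{\mathfrak{L}}$ with $j\ge 1$ tend to $0$ while the $j=0$ term is bounded by $2C$ (Continuity), so $\limsup_{m,n\to\infty}\lVert L^{nk}(f_{m}-f_{n})\rVert_{\mathfrak{L}}\le c_{n}$ with $c_{n}\to 0$. Hence the Kuratowski measure of non-compactness of $L^{nk}$ applied to the unit ball is at most $c_{n}$, and Nussbaum's formula $\rho_{\mathrm{ess}}(L)=\lim_{m}\lVert L^{m}\rVert_{\alpha}^{1/m}$ gives $\rho_{\mathrm{ess}}(L)\le r$.

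Finally, since $\rho_{\mathrm{ess}}(L)\le r<1=\rho(L)$, the part of $\sigma(L)$ in the annulus $\{\,\rho_{\mathrm{ess}}(L)<|z|\le 1\,\}$ is nonempty and, by the standard decomposition of the spectrum beyond the essential spectral radius, consists of finitely many isolated eigenvalues of finite algebraic multiplicity. Choosing $\rho\in(\rho_{\mathrm{ess}}(L),1)$ with $\sigma(L)\cap\{|z|=\rho\}=\emptyset$ and letting $P$ be the Riesz spectral projection associated to $\sigma(L)\cap\{|z|>\rho\}$, the splitting $\mathfrak{L}=P\mathfrak{L}\oplus(I-P)\mathfrak{L}$ has $P\mathfrak{L}$ finite-dimensional with every eigenvalue of $L|_{P\mathfrak{L}}$ of modulus $>\rho$, while $\rho(L|_{(I-P)\mathfrak{L}})\le\rho<\rho(L)$, and both summands are closed and $L$-invariant; this is exactly the asserted quasi-compactness. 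The main obstacle is the second step: combining the iterated Doeblin--Fortet inequality with Pre-compactness through measures of non-compactness (equivalently, showing directly that $\lambda I-L$ is Fredholm of index zero for $|z|=|\lambda|>r$), where the bookkeeping with the auxiliary seminorm and the diagonal argument must be carried out carefully.
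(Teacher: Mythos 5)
The paper does not prove this statement at all: it is quoted verbatim from Hennion--Herv\'e \cite[Theorem XIV.3]{HH:2001} and used as a black box in the verification of condition (R4), so there is no in-paper proof to compare against. Your proposal is a correct reconstruction of the standard (indeed, Hennion's original) argument: normalise so $\rho(L)=1$, iterate the Doeblin--Fortet inequality keeping the intermediate seminorms, use a diagonal extraction from Pre-compactness to make the terms $\lVert L^{jk}(f_m-f_n)\rVert'_{\mathfrak{L}}$ with $j\ge 1$ vanish and Continuity to bound the $j=0$ term, conclude via the separation characterisation of the measure of non-compactness and Nussbaum's formula that $\rho_{\mathrm{ess}}(L)\le r<\rho(L)$, and finish with the Riesz spectral projection. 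Two cosmetic points: the exponent index and the sequence index are both called $n$ in the displayed $\limsup$, and your normalisation remark about the Boundedness hypothesis being ``left intact'' is only literally true if that hypothesis is read with the constant $M$ actually appearing (as $\lVert L(f)\rVert'_{\mathfrak{L}}\le M\lVert f\rVert'_{\mathfrak{L}}$, which is surely the intended statement given the otherwise unused $M$); neither affects the core argument, since the essential-spectral-radius step uses Pre-compactness rather than Boundedness to kill the $j\ge1$ terms.
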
 

\begin{proof}[Proof of Proposition~\ref{prop:rmk1} - Condition (R4)]
Recal that $h_{1}(x) = 1/x$ and that, for all $k \in \mathbb{N}$,
\begin{align*}
U_{k} \coloneqq \overline{\{ y \in Y \colon \phi_{Y}(y) = k \}} = [k/(k+1), (k+1)/(k+2)].
\end{align*}
Let $\mathrm{int}(Y)$ denote the interior of $Y$.  By definition and utilising \eqref{eq:contractions_induced}, we conclude that 
\begin{align*}
R(1)(\mathds{1}_{\mathrm{int}(Y)})(x)
&= \lim_{m \to + \infty} \sum_{k = 1}^{m} \mathds{1}_{\mathrm{int}(Y)}(x) \cdot \widehat{T}_{1}^{k}(\mathds{1}_{U_{k}})(x)\\
&= \lim_{m \to + \infty} \sum_{k = 1}^{m} \mathds{1}_{\mathrm{int}(Y)}(x) \cdot x \cdot \mathcal{P}_{1}^{k}(\mathds{1}_{U_{k}} \cdot h_{1})(x)\\
&= \lim_{m \to + \infty} \sum_{k = 1}^{m} \mathds{1}_{\mathrm{int}(Y)}(x) \frac{x}{(1 + (k-1)\cdot x) \cdot (1 + k \cdot x)}\\
&= \lim_{m \to + \infty} \sum_{k = 1}^{m} \mathds{1}_{\mathrm{int}(Y)}(x) \cdot x \cdot \left( \frac{1-k}{(1 + (k-1)\cdot x)} + \frac{k}{(1 + k \cdot x)} \right)
= \mathds{1}_{\mathrm{int}(Y)}(x).
\end{align*}
Hence, the function $\mathds{1}_{\mathrm{int}(Y)}$ is an eigenfunction of the operator $R(1)$ with eigenvalue one and therefore the spectral radius $\rho(R(1)\vert_{\mathrm{BV}(Y)})$ of $R(1)$ restricted to the Banach space $\mathrm{BV}(Y)$ is equal to $1$.  In order to show that $1$ is an isolated eigenvalue it is sufficient to show that $R(1)$ is quasi-compact.  By Theorem~\ref{thm:Herrion}, this follows from the following four properties.
\begin{description}
\item[\hspace{1em}Continuity] Let $(f_{n})_{n \in \mathbb{N}}$ denote a convergent sequence in $\mathrm{BV}(Y)$ and denote its limit by $f \in \mathrm{BV}(Y)$.  By the definition of $\lVert \cdot \rVert_{\mathrm{BV}(Y)}$, we have that $\lim_{n \to +\infty} \lVert f_{n} - f \rVert_{\infty} = 0$ and hence
\begin{align*}
\qquad
\lim_{n \to +\infty} \lVert f_{n} - f \rVert_{1, 1}
\leq \lim_{n \to +\infty} \int \lVert f_{n} - f \rVert_{\infty} \; \mathrm{d}\mu_{1}
= \lim_{n \to +\infty} \ln(2) \cdot \lVert f_{n} - f \rVert_{\infty}
= 0.
\end{align*}
\item[\hspace{1em}Pre-compactness] From \eqref{eq:induced_operator} one can deduce that
\begin{align*}
\qquad
\lVert R(1)(f) \rVert_{\mathcal{L}_{1}^{1}(Y)} = \lVert f \rVert_{\mathcal{L}_{1}^{1}(Y)}
\end{align*}
Therefore, by linearity of the operator $R(1)$, Egrov's theorem \cite[Theorem 2.2.1]{Bo:2007}, Proposition~\ref{Prop:propertiesBV}(\ref{BV:difference}) and Proposition~\ref{Prop:propertiesBV-Complex}(\ref{BV:Complex2}) and (\ref{BV:Complex3}), it is sufficient to show the following.  Given a sequence $( f_{n} \colon Y \to \mathbb{R} )_{n \in \mathbb{N}}$ of non-decreasing, non-negative functions which are bounded everywhere such that there exists a constant $M$ with $\lVert f_{n} \rVert_{\mathrm{BV}(Y)} = 2 \lVert f_{n} \rVert_{\infty} \leq M$, then there exists a monotonic subsequence $( n_{k} )_{k \in \mathbb{N}}$ of $\mathbb{N}$ such that the sequence $( f_{n_{k}} )_{n_{k} \in \mathbb{N}}$ converges to a function $f$, with finite $\mathrm{BV}(Y)$-norm, point-wise almost everywhere.  (We recall, by the definition of $\mathrm{BV}(Y)$, that the functions $f_{n}$ and $f$ are right-continuous.)  To this end let $R$ denote a countable dense subset of $Y$ and let $\{ r_{k} \}_{k \in \mathbb{N}}$ be an enumeration of $R$.  Since the sequence $\{ f_{n}(r_{1}) \}_{n \in \mathbb{N}}$ is a bounded subsequence, by the Bolzano-Weierstra{\ss} theorem, there exists an accumulation point $j_{1} \in [0, M/2]$ and a monotonic sequence of natural numbers $( n^{_{(1)}}_{k} )_{k \in \mathbb{N}}$ so that $\lim_{k \to +\infty} f_{n^{_{(1)}}_{k}}(r_{1}) = j_{1}$.  The same argument applied to the sequence $(f_{n^{_{(1)}}_{k}}(r_{2}) )_{k \in \mathbb{N}}$ produces an accumulation point $j_{2} \in [0, M/2]$ and a monotonic sequence $( n^{_{(2)}}_{k} )_{k \in \mathbb{N}}$ of natural numbers so that $\lim_{k \to +\infty} f_{n^{_{(2)}}_{k}}(r_{2}) = j_{2}$.  Continuing this procedure \textit{ad infinitum} leads to a sequence of points $( j_{k} )_{k \in \mathbb{N}}$, which belong to the interval $[0, M/2]$, and a nested sequence of monotonic subsequences $( (n^{_{(m)}}_{k})_{k \in \mathbb{N}} )_{m \in \mathbb{N}}$ of the natural numbers such that for all $m \in \mathbb{N}$,
\begin{align*}
\qquad
\lim_{k \to +\infty} f_{n^{_{(m)}}_{k}}(r_{i}) = j_{i},
\end{align*}
for all $i \in \{ 1, 2, 3, \dots, m \}$.  We will show that there exists a positive function $f \colon Y \to \mathbb{R}$ with $\lVert f \rVert_{\mathrm{BV}(Y)} \leq M$ which is the almost everywhere point-wise limit of the sequence of functions $( f_{n^{_{(k)}}_{k}} )_{k \in \mathbb{N}}$.  Define
\begin{align*}
\qquad
f(x) \coloneqq
\begin{cases}
\displaystyle{\lim_{k \to +\infty} f_{n^{_k}_{k}}(x)} & \text{if} \; x \in R\\
\displaystyle{\lim_{r \downarrow x; \; r \in R} f(r)} & \text{if} \; x \in Y \setminus R.
\end{cases} 
\end{align*}
This is well defined since, for all $k \in \mathbb{N}$, the function $f_{n^{_{(k)}}_{k}}$ is right-continuous, non-decreasing, non-negative and bounded above by $M/2$ everywhere, and so, on $R$ the function $f$ is right-continuous, non-decreasing, non-negative and bounded above by $M/2$.  Therefore, we have that $\lVert f \rVert_{BV} = 2 \, \lVert f \rVert_{\infty} \leq M$, in particular that $f$ is of bounded variation and so differentiable almost everywhere, and hence, continuous almost everywhere.  Let $U$ denote the set of points where $f$ is discontinuous.  If $x \in R\setminus U$, then the point-wise convergence follows by construction.  If $x \in Y \setminus (R \cup U)$, then since $f$ is continuous on this set, we have that
\begin{align*}
\qquad
f(x)
= \lim_{\substack{y \uparrow x; \\ y \in Y \setminus (U \cup R)}} f(y)
= \lim_{\substack{y \uparrow x; \\ y \in Y \setminus (U \cup R)}} \; \lim_{\substack{r \downarrow y; \\ r \in R }} \;\; \liminf_{k \to +\infty} f_{n^{_{(k)}}_{k}}(r)
\leq \liminf_{k \to +\infty} f_{n^{_{(k)}}_{k}}(x)
\end{align*}
and that
\begin{align*}
\qquad
f(x)
= \lim_{\substack{y \downarrow x; \\ y \in Y \setminus (U \cup R)}} f(y)
= \lim_{\substack{y \downarrow x; \\ y \in Y \setminus (U \cup R)}} \; \lim_{\substack{r \downarrow y; \\ r \in R }} \;\; \limsup_{k \to +\infty} f_{n^{_{(k)}}_{k}}(r)
\geq \limsup_{k \to +\infty} f_{n^{_{(k)}}_{k}}(x).
\end{align*}
Thus the limit $\lim_{k \to +\infty} f_{n^{_{(k)}}_{k}}(x)$ exists and equals $f(x)$ for all $x \in Y \setminus U$.
\item[\hspace{1em}Boundedness] Indeed, as mentioned above, from \eqref{eq:induced_operator} one can deduce that $\lVert R(1) \rVert_{\mathcal{L}_{1}^{1}(Y)} = 1$.
\item[\hspace{1em}Doeblin-Fortet Inequality] By \eqref{eq:DF1} and \eqref{eq:DF}, setting $c = 6 - 2^{5/2}$, for a $\mathbb{R}$-valued $f \in \mathrm{BV}(Y)$,
\begin{align*}
\qquad
\lVert R(1)^{2}(f) \rVert_{\mathrm{BV}(Y)} &\leq 2 \cdot \lVert f \rVert_{1, 1} + 2 \cdot V_{Y}(R(1)^{2}(f))\\
&\leq 2 \cdot \lVert f \rVert_{1, 1} + 2 \cdot c \cdot \lVert R(1)(f) \rVert_{\infty} + V_{Y}(R(1)(f))\\
&\leq 2 \cdot (1 + c) \cdot \lVert f \rVert_{1, 1} + (c + 1) \cdot V_{Y}(R(1)(f)\\
&\leq 2 \cdot (1 + c) \cdot \lVert f \rVert_{1, 1} + c \cdot (c + 1) \cdot \lVert f \rVert_{\infty} + (1/2) \cdot (c + 1) \cdot V_{Y}(f)\\
&\leq 2 \cdot (1 + c) \cdot \lVert f \rVert_{1, 1} + (1/2) \cdot (c + 1) \cdot \lVert f \rVert_{\mathrm{BV}(Y)},
\end{align*}
and hence,
\begin{align*}
\qquad
\lVert R(1)^{4}(f) \rVert_{\mathrm{BV}(Y)}
\leq (2 \cdot (1 + c) + (1+c)^{2})\cdot  \lVert f \rVert_{1,1} + (1/4) \cdot (c + 1)^{2} \cdot \lVert f \rVert_{\mathrm{BV}(Y)}.
\end{align*}
Using Proposition~\ref{Prop:propertiesBV-Complex}(\ref{BV:Complex3}), if $f \in \mathrm{BV}(Y)$ is $\mathbb{C}$-valued, then 
\begin{align*}
\qquad
\lVert R(1)^{4}(f) \rVert_{\mathrm{BV}(Y)}
\leq 2\cdot(2\cdot(1 + c) + (1+c)^{2}) \cdot \lVert f \rVert_{1, 1} + (1/2)\cdot(c + 1)^{2} \cdot \lVert f \rVert_{\mathrm{BV}(Y)}.
\end{align*}
Noting that $(1/2)(c + 1)^{2} < 1$ yields the required inequality.
\end{description}
\end{proof}

\begin{proof}[Proof of Proposition~\ref{prop:rmk1} - Condition (R5)]
For $z \in \mathbb{D}\setminus \mathbb{S}$, we define the operator $T(z) \colon \mathcal{L}^{1}_{1}(Y) \circlearrowleft$ by
\begin{align*}
T(z)(f) \coloneqq \sum_{n = 1}^{+\infty} z^{n} \cdot \mathds{1}_{Y} \cdot \widehat{T}^{n}(\mathds{1}_{Y} \cdot f).
\end{align*}
By \cite[Proposition 1]{S:2002} we have that
\begin{align*}
R(z) \circ T(z)(f) = T(z)(f) - f   = T(z) \circ R(z)(f).
\end{align*}
This implies that $1$ does not belong to the spectrum of the operator $R(z)$.  Hence, it is sufficient to show the result for $z \in \mathbb{S} \setminus \{ 1 \}$.  For this, we will follow the arguments given in the proof of \cite[Lemma 6.7]{G:2004}.  To this end let $t \in (0, 2\pi)$ and let $z = \mathrm{e}^{i \cdot t}$ be fixed.  Suppose, by way of contradiction, that $R(z)(f) = f$ for some non-zero $f \in \mathrm{BV}(Y)$.  Let $\mathcal{L}_{1}^{2}(Y)$ denote the space of $\mathbb{C}$-valued square integrable functions with respect to the measure $\mu_{1}$ that have domain $[0, 1]$ and are supported on $Y$.  Further, let $\langle \cdot, \cdot \rangle$ denote the associated bilinear form.  Define the operator $W \colon \mathcal{L}^{\infty}(Y) \circlearrowleft$, by
\begin{align*}
W(u) \coloneqq \mathrm{e}^{-i \cdot t \cdot \phi_{Y}} \cdot u \circ T_{1}^{\phi_{Y}} 
\end{align*}
for $u \in \mathcal{L}^{\infty}(Y)$.  Using the fact that $R(z)(v) = R(1)(\mathrm{e}^{i \cdot t \cdot \phi_{Y}} \cdot v)$ with \eqref{eq:induced_operator}, for all $v \in \mathrm{BV}(Y)$ and $u \in \mathcal{L}^{\infty}(Y)$,
\begin{align*}
\langle u, R(z)(v) \rangle
= \int \overline{u} \cdot R(z)(v) \, \mathrm{d}\mu_{1}
= \int \overline{u} \cdot R(1)(\mathrm{e}^{i \cdot t \cdot \phi_{Y}} \cdot v) \, \mathrm{d}\mu_{1}
= \int \overline{u} \circ T_{1}^{\phi_{Y}} \cdot \mathrm{e}^{i\cdot t\cdot \phi_{Y}} \cdot v \, \mathrm{d}\mu_{1}
= \langle W(u), v \rangle,
\end{align*}
and thus,
\begin{align}\label{eq:Wf-f}
\begin{aligned}
\lVert W(f) - f \rVert_{\mathcal{L}^{2}_{1}(Y)}^{2}
&= \lVert W(f) \rVert_{\mathcal{L}^{2}_{1}(Y)}^{2} - 2 \cdot \mathfrak{Re}\, \langle W(f), f \rangle + \lVert f \rVert_{\mathcal{L}^{2}_{1}(Y)}^{2}\\
&= \lVert W(f) \rVert_{\mathcal{L}^{2}_{1}(Y)}^{2} - 2 \cdot \mathfrak{Re}\, \langle f, R(z)(f) \rangle + \lVert f \rVert_{\mathcal{L}^{2}_{1}(Y)}^{2}\\
&= \lVert W(f) \rVert_{\mathcal{L}^{2}_{1}(Y)}^{2} - 2 \cdot \mathfrak{Re}\, \langle f, f \rangle + {\lVert f \rVert_{2}}^{2}\\
&= \lVert W(f) \rVert_{\mathcal{L}^{2}_{1}(Y)}^{2} - \lVert f \rVert_{\mathcal{L}^{2}_{1}(Y)}^{2},
\end{aligned}
\end{align}
By another application of \eqref{eq:induced_operator}, we also have that
\begin{align}\label{eq:Wf=f}
\lVert W(f) \rVert_{\mathcal{L}^{2}_{1}(Y)}^{2}
= \int \lvert f \rvert^{2} \circ T_{1}^{\phi_{Y}} \, \mathrm{d}\mu_{1}
= \int \lvert f \rvert^{2} \, \mathrm{d}\mu_{1}
= \lVert f \rVert_{\mathcal{L}^{2}_{1}(Y)}^{2}.
\end{align}
From \eqref{eq:Wf-f} and \eqref{eq:Wf=f}, we obtain that $W(f) - f$ is zero $\mu_{1}$-almost everywhere.  Since by definition of $\mathrm{BV}(Y)$, we have that $f$ is right-continuous, $W(f)$ is right-continuous, and so the function $W(f) - f$ is zero everywhere.

We now have a right-continuous function $f$ so that $\mathrm{e}^{- i\cdot t\cdot \phi_{Y}} \cdot f \circ T_{1}^{\phi_{Y}} = f$.  Since the $T_{1}$ is ergodic with respect to $\mu_{1}$ by \cite[Proposition 1.4.8, 1.5.1 and 1.5.3]{JA:1997} we have that $T_{1}^{\phi_{Y}}$ is ergodic with respect to $\mu_{1}$.  Thus, by \cite[Theorem 1.6]{PW:2000}, we obtain that $\lvert f \rvert$ is constant everywhere.  As $f$ is non-zero, this constant is non-zero, and so, we obtain that $\mathrm{e}^{-i\cdot t\cdot \phi_{Y}} = f / (f \circ T_{1}^{\phi_{Y}})$.  However, since for each $n \in \mathbb{N}$, there exists an $x \in Y$ such that $T_{1}^{\phi_{Y}}(x) = x$ and such that $\phi_{Y}(x) = n$, we have that $\mathrm{e}^{-i\cdot t\cdot n} = 1$ for all $n \in \mathbb{N}$.  This contradicts the choice of $t$, namely that $t$ belongs to the open interval $(0, 2\pi)$.
\end{proof}

\end{document}